\newtheorem{thm}{Theorem}
\newtheorem{coro}{Corollary}
\newtheorem{lem}{Lemma}
\newtheorem{assumption}{Assumption}
\theoremstyle{definition}
\theoremstyle{definition}
\newtheorem{remark_tmp}{Remark}[section]
\newenvironment{remark}
{ \begin{remark_tmp} 	}
	{ 
		\medskip\hfill{\LARGE$\lrcorner$}
	\end{remark_tmp} 
}
\newtheorem{exmp}{Example}
\DeclareMathOperator*{\argmin}{arg\,min}
\DeclareMathOperator*{\tr}{Tr}
\DeclareMathOperator*{\diag}{diag}
\renewcommand{\P}{\mathbb{P}}
\newcommand{\E}{\mathbb{E}}
\newcommand{\V}{\mathbb{V}}
\newcommand{\I}{\mathds{1}}
\newcommand{\subi}{\langle i \rangle}
\newcommand{\gr}[1]{\cdot\mathcal{C}_{i#1}}
\newcommand{\bA}{\bm{A}}
\newcommand{\bH}{\bm{H}}
\newcommand{\bE}{\bm{E}}
\newcommand{\bF}{\bm{F}}
\newcommand{\bG}{\bm{G}}
\newcommand{\bI}{\bm{I}}
\newcommand{\bM}{\bm{M}}
\newcommand{\bP}{\bm{P}}
\newcommand{\bR}{\bm{R}}
\newcommand{\bU}{\bm{U}}
\newcommand{\bW}{\bm{W}}
\newcommand{\bX}{\bm{X}}
\newcommand{\bY}{\bm{Y}}
\newcommand{\be}{\bm{e}}
\newcommand{\bmf}{\bm{f}}
\newcommand{\bh}{\bm{h}}
\newcommand{\bq}{\bm{q}}
\newcommand{\bu}{\bm{u}}
\newcommand{\bv}{\bm{v}}
\newcommand{\bx}{\bm{x}}
\newcommand{\bw}{\bm{w}}
\newcommand{\bXi}{\boldsymbol{\Xi}}
\newcommand{\balpha}{\boldsymbol{\alpha}}
\newcommand{\bvth}{\boldsymbol{\vartheta}}
\newcommand{\bSigma}{\boldsymbol{\Sigma}}
\newcommand{\bOmega}{\boldsymbol{\Omega}}
\newcommand{\bUpsilon}{\boldsymbol{\Upsilon}}
\newcommand{\blambda}{\boldsymbol{\lambda}}
\newcommand{\bLambda}{\boldsymbol{\Lambda}}
\newcommand{\bmeta}{\boldsymbol{\eta}}
\newcommand{\lvarsig}{\underline{\varsigma}}
\newcommand{\uvarsig}{\bar{\varsigma}}
\newcommand{\ttd}{\mathsf{d}}
\numberwithin{assumption}{section}
\numberwithin{equation}{section}
\numberwithin{thm}{section}
\numberwithin{lem}{section}
\numberwithin{coro}{thm}
\numberwithin{exmp}{section}
\newcommand{\forloop}[5][1]{\setcounter{#2}{#3}\ifthenelse{#4}{#5\addtocounter{#2}{#1}\forloop[#1]{#2}{\value{#2}}{#4}{#5}}{}}
\begin{document}

\title{\vspace{-0.25in} \Large Optimal Estimation of Large-Dimensional Nonlinear Factor Models
	\thanks{We especially thank Matias Cattaneo for helpful comments and discussions. We also thank Sebastian Calonico, Richard Crump, Jianqing Fan, Max Farrell, Andreas Hagemann,  Michael Jansson, Lutz Kilian, 
	Xinwei Ma, Kenichi Nagasawa, Roc\'{i}o Titiunik and Gonzalo Vazquez-Bare for their valuable feedback.
    Feng gratefully acknowledges financial support from
    the National Natural Science Foundation of China (NSFC) through grants 72203122 and 72133002.}
\bigskip }
\author{
	Yingjie Feng\thanks{School of Economics and Management, Tsinghua University. 
	}}
\maketitle

\vspace{.5em}

\begin{abstract}
This paper studies optimal estimation of large-dimensional nonlinear factor models. The key challenge is that the observed variables are possibly nonlinear functions of some latent variables where the functional forms are left unspecified. A local principal component analysis method is proposed to estimate the factor structure and recover information on latent variables and latent functions, which combines $K$-nearest neighbors matching and principal component analysis. Large-sample properties are established, including a sharp bound on the matching discrepancy of nearest neighbors, sup-norm error bounds for estimated local factors and factor loadings, 
and the uniform convergence rate of the factor structure estimator. Under mild conditions our estimator of the latent factor structure can achieve the optimal rate of uniform convergence for nonparametric regression.
The method is illustrated with a Monte Carlo experiment and an empirical application studying the effect of tax cuts on economic growth.
\end{abstract}

\textit{Keywords:} nonlinear factor model, latent variables, low-rank method, high-dimensional data, principal component analysis

\thispagestyle{empty}


\clearpage

\onehalfspacing
\setcounter{page}{1}

\pagestyle{plain}

\section{Introduction} \label{sec: introduction}

High-dimensional data have become increasingly available due to technological advances in data collection, 
which are typically characterized by a large number of cross-sectional units and a large number of features. 
Factor analysis is a useful tool for summarizing information in such big data sets and has wide applications in statistics, economics, and many other data science disciplines.  

One crucial  insight of factor analysis is that much of the data variation can be explained by the interaction between a few important,  usually \textit{unobserved} characteristics associated with two dimensions (``cross section'' and ``features''). 
For example, a classical factor model has the following representation
\[
x_{il}=\bmf_l'\bm\alpha_i+u_{il}, \quad 1\leq i\leq n, \; 1\leq l\leq p,
\]
where $x_{il}$ is the $l$th feature of the $i$th cross-sectional unit; $\bmf_l\in\mathbb{R}^r$ is usually termed common factors, which only vary across $l$;  $\bm\alpha_i\in\mathbb{R}^r$ is termed factor loadings, which only vary across $i$; and $u_{il}$ is some idiosyncratic error.
Both the number of cross-sectional units $n$ and the number of features $p$ are large. 
Such specifications are common in many problems. For instance, in panel data analysis $x_{il}$ may be a variable collected over time, and $\blambda_l$ and $\balpha_i$ are time-specific and individual-specific effects respectively; in analysis of a recommender system $x_{il}$ may be a variable representing the preference of an individual $i$ for an item $l$, which is explained by  item-specific features $\blambda_l$ and individual-specific  features $\balpha_i$; and in causal inference and program evaluation $x_{il}$ could be repeated measurements of some underlying unobserved confounders $\balpha_i$, and $\blambda_l$ is a (linear) transformation specific to the $l$th measurement.

The linear structure above, however, is usually restrictive and may be unrealistic in many problems. For example, test scores in multiple subjects or time periods are often used to measure fundamental abilities of students in empirical research \cite[e.g.,][]{Cunha-Heckman-Schennach_2010_ECMA}. It is difficult to justify a linear relationship between test scores and unobserved abilities in this context. A more appealing approach is to consider a \textit{possibly nonlinear} factor model
\citep{Yalcin-Amemiya_2001_SS}
\begin{equation}\label{eq: model}
x_{il}=\eta_l(\balpha_i)+u_{il}.
\end{equation}
The observed variables $x_{il}$ (e.g., test scores) and the unobserved variables $\balpha_i$ (e.g., latent abilities) are linked through a ``production function'' $\eta_l$, whose functional form is left \textit{unspecified}.
This setup encompasses the classical linear factor model \citep{Bai-Wang_2016_ARE} as a special case, but also allows for other possibly nonlinear relationships between the observables and the unobservables.

In this paper we analyze the nonlinear factor model \eqref{eq: model} based on  local principal subspace approximation. 
The procedure begins with $K$-nearest neighbors ($K$-NN) matching  approximation for each unit on the observed features $x_{il}$'s.  
Within each local neighborhood formed by the $K$ matches, the underlying possibly nonlinear factor structure is then approximated by a linear factor structure which can be estimated using principal component analysis (PCA). Given the locality of this procedure, we term it \textit{local principal component analysis} in this paper.

This methodology has an intuitive geometric interpretation. The set of latent functions $\{\eta_l:1\leq l\leq p\}$ generates a \textit{low}-dimensional, possibly nonlinear ``surface'' embedded in a \textit{high}-dimensional space, if the number of features $p$ is large but the number of latent variables $r$ is small. Suppose that different values of latent variables $\balpha_i$ can induce non-negligible differences in  \textit{many} observed features $x_{il}$'s. In this case, the $K$ nearest neighbors of each unit as appropriately measured by the observed features should also be close in terms of the latent variables, thus forming a local neighborhood in the high-dimensional space. Furthermore, such units are approximately lying on a local principal surface, up to errors governed by the number of nearest neighbors and the number of principal components extracted in each local neighborhood.  
The availability of \emph{many} observables as ``measurements'' of the latent variables $\balpha_i$ is crucial for validity of this approximation, which affects the matching discrepancy of nearest neighbors and the estimation error of local principal components.

The idea of locally approximating a nonlinear latent surface embedded in a high-dimensional space has been widely used in the modern machine learning literature and is popular in applications such as face recognition, motion segmentation, and text classification. Typical examples include 
\cite{Zhang-Zha_2004_SIAM,
	Peng-Lu-Wang_2015_NN,Arias-Lerman-Zhang_2017_JMLR}, among others. This paper provides a formal theoretical foundation for such methods that use  the similar idea of local approximation.

We establish the statistical properties of our proposed local PCA in large-dimensional settings, which makes several  contributions to the literature. 
First, we derive a sharp bound on the implicit discrepancy of nearest neighbors in terms of the latent variables (Theorem \ref{thm: IMD}). 
This result is established under a generic choice of the distance function, encompassing and extending previous studies of matching techniques based on specific metrics \citep[e.g.,][]{Zhang-Zha_2004_SIAM, Zhang-Levina-Zhu_2017_BIMA}.
Crucially, we show that the closeness of nearest neighbors, indirectly obtained through matching on noisy measurements, relies on two conditions: (i) the selected distance can ``denoise'' the data to reveal the latent factor structure $\eta_l(\balpha_i)$, 
and (ii) the distance of the noise-free structure $\eta_l(\balpha_i)$ needs to be informative about that of the unobserved variables $\balpha_i$. 
Therefore, our first contribution provides theoretical guidance for practitioners who have to rely on noisy measurements to match on some unobserved variables of interest. 

Second,  we derive the sup-norm error bounds for  the estimated local factors and factor loadings obtained by applying PCA to nearest neighbors (Theorem \ref{thm: uniform convergence of factors}). The target quantities characterize the latent functions $\eta_l$'s and the latent variables $\balpha_i$'s  respectively.
Importantly, due to the potential nonlinearity, the strength of factors in the local approximation is possibly heterogeneous and needs to be properly taken into account.  This result appears to be new in the literature, complementing the studies of linear factor models with weak or semi-strong factors \citep{onatski2012asymptotics, Wang-Fan_2017_AoS, Abbe-et-al_2020_AoS}. 
Furthermore, we note that though the latent functions $\eta_l$'s and latent variables $\balpha_i$'s cannot be separately recovered without additional restrictions, the factors and loadings from local PCA suffice for flexible out-of-sample forecasts based on nonparametric regression and can be used in general causal inference problems with mismeasured confounders \citep{miao2018identifying,nagasawa2018treatment}. The details of this method are discussed in \cite{Feng_2023_wp}.

Third, building on the first two results, we show that local PCA can consistently estimate the nonlinear factor structure $\eta_l(\balpha_i)$, deriving a convergence rate that is uniform over both individuals and features (Corollary \ref{coro: consistency of latent mean}). 
Under rather general conditions, 
the local PCA estimator can attain a uniform convergence rate that coincides with the optimal one for the infeasible cross-sectional nonparametric estimation of the heterogeneous functions $\eta_l$'s  \citep{stone1982optimal}. 
To the best of our knowledge, this paper is the first to show this rate can be achieved in this general nonlinear factor model,  
contributing to the literature on low-rank approximation of data matrices \citep[][]{udell2019big, fernandez2021low}.

Fourth, in Section \ref{subsec: extensions} and the online Supplemental Appendix we extend the basic nonlinear factor model \eqref{eq: model} by including observable regressors that have  high-rank variation in both dimensions. This provides a new tool for studying, for example, linear regression models with nonlinear fixed effects,  complementing the vast literature on panel regression with interactive fixed effects \citep{Bai_2009_ECMA,Bai-Li_2014_AoS}.

Finally, we apply local PCA to certain matrix completion problems with a few missing entries (Theorem \ref{thm: robustness}). An empirical application studying the effect of tax cuts on economic growth is used to illustrate the potential usefulness of the proposed method in policy evaluation settings such as synthetic controls \citep{Abadie_2020_JEL}

The rest of the paper is organized as follows. In Section \ref{sec: setup} we formally set up the nonlinear factor model. Section \ref{sec: estimation} gives a detailed description of the estimation procedure. Section \ref{sec: main results} presents the
main theoretical results. Section \ref{sec: simulations} summarizes Monte Carlo results. An empirical application to synthetic  control problems is given in Section \ref{sec: application}. 
Section \ref{sec: conclusion} concludes. 
The appendices collects several technical results that may be of independent interest, including properties of several distance functions, verification of the local approximation of latent factor structure (Appendix \ref{sec: appendix matching}), and selected proofs for the main results (Appendix \ref{sec: appendix proofs}).
The online Supplemental Appendix (SA hereafter) contains all omitted proofs and additional technical and numerical results. 
Replications of the simulation study and empirical illustration are available at 
\url{https://github.com/yingjieum/Replication_NonlinearFactorModel_2023}.

\section{Nonlinear Factor Model} \label{sec: setup}

Let $\bm{x}_i=(x_{i1}, \cdots, x_{ip})'$ be a $p$-vector of observed variables for the $i$th unit in the sample. 
We can write the nonlinear factor model as
\begin{equation}\label{eq: HD covariate}
	\bx_{i}=\bmeta(\balpha_i)+\bu_{i}, \quad \E[\bu_{i}|\mathscr{F}]=0, \quad i=1, \cdots, n,
\end{equation}
where $\balpha_i\in\mathbb{R}^{r}$ is a vector of latent variables,
$\bmeta=(\eta_{1}, \cdots, \eta_{p})': \mathbb{R}^r\mapsto\mathbb{R}^p$ is a vector of latent functions, and  $\bu_{i}=(u_{i1},\cdots,u_{ip})'$ is the idiosyncratic error. 
In matrix notation,
$$\bX=\bH+\bU$$ 
where $\bX=(\bx_1,\cdots, \bx_n)$, $\bH=(\bmeta(\balpha_1),\cdots, \bmeta(\balpha_n))$ and $\bU=(\bu_1,\cdots, \bu_n)$ are $p\times n$ matrices.
Throughout the paper, $(\balpha_i: 1\leq i\leq n)$ and $\bmeta(\cdot)$ are understood as random elements, which generate the $\sigma$-field $\mathscr{F}$. Our main analysis below is conducted \emph{conditional} on $\mathscr{F}$. 
In this sense, $\balpha_i$ and $\bmeta(\cdot)$ are akin to the ``fixed effects'' commonly incorporated in panel data models.

The usual linear factor model, also known as the interactive fixed-effect model, is covered as a special case by this setup, where 
the latent function is assumed to be linear in $\balpha_i$, 
e.g., $\eta_l(\balpha_i)=\bmf_l'\balpha_i$ for some $\bmf_l\in\mathbb{R}^r$, $l=1, \cdots, p$. 
By construction, the latent mean structure $\bH$ is exactly low-rank ($r\ll p\wedge n$). In the more general case, however, the potential nonlinearity of the latent function $\bmeta(\cdot)$ can make $\bH$ full-rank, and  traditional methods based on assuming a linear factor structure become inappropriate. 

The main insight in nonlinear factor analysis is that due to the low-dimensionality of $\balpha_i$, the variation of the large-dimensional $\bx_{i}$ can still be explained by a few low-dimensional components in a possibly nonlinear way, which implies that $\bH$ is still \textit{approximately} low-rank. To gain some intuition, consider the local linear approximation approach widely used in the manifold learning literature (e.g., \citealp{Zhang-Zha_2004_SIAM}):
\begin{equation}\label{eq: linear approx}
\eta_l(\balpha_j)\approx \eta_l(\balpha_i)+\nabla\bmeta_l(\balpha_i)'(\balpha_j-\balpha_i) \quad \text{for}\quad  \balpha_j\approx \balpha_i,
\end{equation}
where $\nabla \bmeta_l(\balpha_i)$ is the vector of first-order partial derivatives of $\eta_l$, evaluated at $\balpha_i$. 
This representation amounts to an approximately linear factor model with $r+1$ ``factors'' (i.e., an intercept plus $r$ partial derivatives of $\eta_l$), which motivates applying the usual principal component analysis to the local neighborhood of $\balpha_i$. 

More generally, if $\eta_l$ is sufficiently smooth, a higher-order approximation can be employed, which amounts to extracting more \textit{local} factors from the approximation error in equation \eqref{eq: linear approx}. 
Intuitively, as derivatives of $\bmeta(\cdot)$, the ``factors'' in such representations  
signify the degree of nonlinearity of the underlying latent structure, and the ``loadings'' reflect the magnitude of different approximation terms. 

In practice, since $\balpha_i$ is never observed by the researcher, one has to first employ some indirect strategy to construct the local neighborhood for each unit based on observables, and then conduct principal component analysis locally. This estimation procedure is described in Section \ref{sec: estimation} and then theoretically formalized in Section \ref{sec: main results}.

Now, before we proceed to the estimation procedure, we summarize the regularity conditions on the latent variables $\balpha_i$, latent functions $\bmeta(\cdot)$ and the error terms $\bu_i$ in the next assumption, which are imposed throughout our main analysis.

\begin{assumption}[Regularities] 
	\label{Assumption: Regularities}\leavevmode
	\begin{enumerate}[label=(\alph*)]
		\item 
		$(\balpha_i:1\leq i\leq n)$ is i.i.d. over a compact convex support $\mathcal{A}$ with densities bounded and bounded away from zero;
		
		\item Each $\eta_l(\cdot)$, $1\leq l\leq p$, is $\bar{m}$-times continuously differentiable for some $\bar{m}\geq 2$ with all partial derivatives of order no greater than $\bar{m}$ bounded by a universal constant;
		
		\item $(u_{il}: 1\leq i\leq n, 1\leq l\leq p)$ is independent over $i$ and $l$ conditional on $\mathscr{F}$, and 
		for some $\nu>0$,
		$\max_{1\leq i\leq n,1\leq l\leq p}\E[|u_{il}|^{2+\nu}|\mathscr{F}]<\infty$  a.s. on $\mathscr{F}$.
	\end{enumerate}
\end{assumption}
Parts (a) and (b) are commonly used in the nonparametric regression literature. 
For simplicity, we assume all latent functions are sufficiently smooth, i.e., they belong to a H\"{o}lder class of order $\bar{m}$. Part (c) is a standard condition on idiosyncratic errors
in factor analysis and graphon estimation. The independence requirement is imposed to simplify some analysis and can be further relaxed to allow for some weak correlation in one or both dimensions.

\subsection{Notation}
 
\textbf{Derivatives.} For a generic sequence of functions $h_j(\cdot)$, $j=1, \cdots, M$, defined on a compact support, let $\nabla^{\ell}\bh_j(\cdot)$ be a vector of $\ell$th-order partial derivatives of $h_j(\cdot)$. 
The derivatives on the boundary are understood as limits with the arguments ranging within the support.
When $\ell=1$, $\nabla\bh_j(\cdot):=\nabla^1\bh_j(\cdot)$ is the gradient vector, and the Jacobian matrix is 
$\nabla\bh(\cdot):=(\nabla\bh_1(\cdot), \cdots, \nabla\bh_{M}(\cdot))'$.

\textbf{Matrices.}
For a vector $\bv\in\mathbb{R}^\mathsf{d}$, $\|\bv\|=\sqrt{\bv'\bv}$ is the Euclidean norm of $\bv$, and for an $m\times n$ matrix $\bA$,  $\|\bA\|_{\max}=\max_{1\leq i\leq m, 1\leq j\leq n}|a_{ij}|$ is the entrywise sup-norm of $\bA$. $s_{\max}(\bA)$ and $s_{\min}(\bA)$ denote the largest and smallest singular values of $\bA$ respectively.
Moreover,  $\bA_{i\cdot}$ and $\bA_{\cdot j}$ denote the $i$th row and the $j$th column of $\bA$ respectively.
$\bm{1}_{\ttd}$ denotes the $\ttd$-vector of ones.

\textbf{Asymptotics.} For sequences of numbers or random variables,  $a_n\lesssim b_n$ or $a_n=O(b_n)$ denotes $\limsup_n|a_n/b_n|$ is finite, $a_n\lesssim_\P b_n$ denotes $\limsup_{\varepsilon\rightarrow\infty}\limsup_n\P[|a_n/b_n|\\
\geq\varepsilon]=0$, $a_n=o(b_n)$ implies $a_n/b_n\rightarrow 0$, and $a_n=o_\P(b_n)$ implies that $a_n/b_n\rightarrow_\P 0$, where $\rightarrow_\P$ denotes convergence in probability. $a_n\asymp b_n$ implies that $a_n\lesssim b_n$ and $b_n\lesssim a_n$.

\textbf{Others.}
For two numbers $a$ and $b$, $a\vee b=\max\{a,b\}$ and $a\wedge b=\min\{a,b\}$. For a finite set $\mathcal{S}$, $|\mathcal{S}|$ denotes its cardinality. For a $\ttd$-tuple $\bq=(q_1, \cdots, q_{\ttd})\in\mathbb{Z}_{+}^{\ttd}$ and $\ttd$-vector $\bv=(v_1, \cdots, v_{\ttd})'$, define  $\bv^{\bq}=v_1^{q_1}v_2^{q_2}\cdots v_{\ttd}^{q_{\ttd}}$. We use $[m]$ to denote the set $\{1, 2, \cdots, m\}$ for any positive integer $m$.


\section{Estimation Procedure} \label{sec: estimation}

This section describes the main procedure for local PCA which typically consists of two steps. 
First, choose a proper function $\rho:\mathbb{R}^p\times\mathbb{R}^p\mapsto\mathbb{R}$ to define the ``distance'' between different units in the sample based on the observed variables. We use the notion of distance in a loose sense, that is, $\rho(\cdot, \cdot)$ does not have to satisfy all axioms in the standard definition of the distance function. 
Second, apply principal component analysis to $K$ nearest neighbors of each unit defined by 
the distance calculation in the first step. 
See Algorithm \hyperlink{t2}{1} for a short summary. 
The main tuning parameters in this procedure are the number of nearest neighbors $K$ and the number of extracted principal components  $\ttd_i$ in the local neighborhood of each unit $i$.

\begin{table}[hbt!]\label{algorithm}
	\raisebox{\ht\strutbox}{\hypertarget{t2}{}}
	\centering
	\small
	\renewcommand{\arraystretch}{1.8}
	\begin{tabular}{p{.98\textwidth}}
		\hline\hline
		\textbf{Algorithm 1} (Local Principal Component Analysis) \\
		\hline
		\textbf{Input:} data matrix $\bX\in\mathbb{R}^{p\times n}$, tuning parameters $K$, $\ttd_i$\\
		\textbf{Output:} $\mathcal{N}_i$,  $\widehat{\bF}_{\subi}$, $\widehat{\bLambda}_{\subi}$\\
		Row-wise split $\bX$ into two submatrices $\bX^{\dagger}\in\mathbb{R}^{p^\dagger\times n}$ and $\bX^{\ddagger}\in\mathbb{R}^{p^\ddagger\times n}$\\
		For each $i=1, \cdots, n$,\\
		(1) use $\bX^{\dagger}$ to obtain the set $\mathcal{N}_i$ of the $K$ nearest neighbors of unit $i$ based on the distance $\rho$:  
		\begin{footnotesize}
		\begin{equation*}
		\mathcal{N}_i=\Big\{j_k(i): \sum_{\ell=1}^{n}\I\Big(\rho(\bX^\dagger_{\cdot i},\bX^\dagger_{\cdot \ell})\leq\rho(\bX^\dagger_{\cdot i}, \bX^\dagger_{\cdot j_k(i)})\Big)\leq K, \; 1\leq k\leq K\Big\}
		\end{equation*}
	    \end{footnotesize}
	    \vspace{0em}
	    
		(2) apply PCA to $\bX_{\subi}=(\bX^{\ddagger}_{\cdot j_1(i)}, \cdots, \bX^{\ddagger}_{\cdot j_K(i)})$:
		\begin{footnotesize}
		\begin{equation*}
		(\widehat{\bF}_{\subi}, \widehat{\bLambda}_{\subi})=
		\underset{\scalebox{0.6}{ $\tilde\bF_{\subi}\in\mathbb{R}^{p^\ddagger\times \ttd_i},  \tilde\bLambda_{\subi}\in\mathbb{R}^{K\times \ttd_i}$}}
		{\argmin}\tr\Big[\Big(\bX_{\subi}-\tilde\bF_{\subi}\tilde\bLambda_{\subi}'\Big)\Big(\bX_{\subi}-\tilde\bF_{\subi}\tilde\bLambda_{\subi}')'\Big]
		\end{equation*}
		\end{footnotesize}
		 \hspace{1.2em}such that $\frac{1}{p^\ddagger}\tilde\bF_{\subi}'\tilde\bF_{\subi}=\bI_{\ttd_i}$ and $\frac{1}{K}\tilde\bLambda_{\subi}'\tilde\bLambda_{\subi}$ is diagonal.	
		\medskip
		\\
		\hline
	\end{tabular}
\end{table}

\subsubsection*{Row-wise Splitting} 
We recommend users separate the $K$ nearest neighbors matching and principal component analysis by row-wise sample splitting, which guarantees desired theoretical properties of local PCA as will be explained below.
Specifically, split the row index set $[p]$ of $\bX$ into two non-overlapping subsets: $[p]=\mathcal{R}^\dagger\cup\mathcal{R}^\ddagger$ with $p^\dagger=|\mathcal{R}^\dagger|$,  $p^\ddagger=|\mathcal{R}^\ddagger|$ and $p^\dagger\asymp p^\ddagger\asymp p$. Accordingly, the data matrix $\bX$ is divided into two submatrices $\bX^{\dagger}$ and $\bX^{\ddagger}$ with row indices in $\mathcal{R}^\dagger$ and $\mathcal{R}^\ddagger$ respectively.  $\bH^\dagger$, $\bH^\ddagger$, $\bU^\dagger$ and $\bU^\ddagger$ are defined similarly. By Assumption \ref{Assumption: Regularities}, $\bX^\dagger$ and $\bX^\ddagger$ are independent conditionally on $\mathscr{F}$.
In principle, the two portions of the data only need to be ``approximately'' independent conditional on $\mathscr{F}$, thus allowing for weakly dependent errors. In practice, if rows of $\bX$ are conditionally independent, one could, for example, randomly split the index set into two portions. For unit-time panel data, one could use, for example, the first half of time periods for $K$-NN matching and then apply PCA to the second half, 
thus respecting the original time series structure.

\subsubsection*{$K$-Nearest Neighbors Matching} 
\label{subsec: estimation, KNN}

This step makes use of the subsample labeled by $\dagger$, i.e., the submatrix of $\bX$ with row indices in $\mathcal{R}^{\dagger}$. For a generic unit $i\in[n]$, search for a set of indices $\mathcal{N}_i$ for its $K$ nearest neighbors (including $i$ itself) in terms of the distance  $\rho(\cdot, \cdot)$:
\begin{equation}\label{eq: knn}
\mathcal{N}_i=\Big\{j_k(i): \sum_{\ell=1}^{n}\I\Big(\rho(\bX^\dagger_{\cdot i},\bX^\dagger_{\cdot \ell})\leq\rho(\bX^\dagger_{\cdot i}, \bX^\dagger_{\cdot j_k(i)})\Big)\leq K, \; 1\leq k\leq K\Big\}.
\end{equation}

Our theory is established for generic choices of the distance $\rho(\cdot,\cdot)$ under high-level conditions, which covers many
usual choices in practice such as 
(1) the usual Euclidean distance  
$\rho(\bX^{\dagger}_{\cdot i}, \bX^{\dagger}_{\cdot j})=\frac{1}{\sqrt{p^\dagger}}\|\bX^{\dagger}_{\cdot i}-\bX^{\dagger}_{\cdot j}\|$, 
(2) the pseudo-max distance   
$\rho(\bX^{\dagger}_{\cdot i}, \bX^{\dagger}_{\cdot j})=\max_{l\neq i, j} |\frac{1}{p^\dagger}(\bX^{\dagger}_{\cdot i}-\bX^{\dagger}_{\cdot j})'\bX^{\dagger}_{\cdot l}|$
proposed in \cite{Zhang-Levina-Zhu_2017_BIMA}, 
and (3) the distance of ``average'' $\rho(\bX_{\cdot i}^\dagger, \bX_{\cdot j}^\dagger)
=\frac{1}{p^\dagger}|\bm{1}_{p^\dagger}'(\bX_{\cdot i}^\dagger-\bX_{\cdot j}^\dagger)|$. 
These distance functions have different properties and may affect the behavior of the resulting nearest neighbors. For example,
the pseudo-max distance can  reveal the differences of units in terms of the noise-free factor structure even when the error $\bu_i$ is (condition-on-$\mathscr{F}$) heteroskedastic, while the Euclidean distance cannot in this case. 
More detailed discussion is available in Section \ref{subsec: main result, knn} and Appendix \ref{sec: appendix matching}. 

Note that when the observed variables differ in scale or importance for revealing information on the latent variables, it may be desirable to rescale or reweight different features when searching for nearest neighbors. Such transformations can be viewed as particular choices of the distance. See Remark \ref{remark: other metrics} below for more discussion.

In general, the ``distance'' function $\rho(\cdot,\cdot)$ needs to fulfill two purposes: (i) the distance of observables can be translated into that of noise-free factor structure (``denoising''), and 
(ii) the distance of the noise-free structure can be translated into that of the unobservables $\balpha_i$. 
To grasp some intuition, take the pseudo-max distance as an example. This ``metric'' is defined based on averaging information across different features. If the error $u_{il}$ is independent or weakly dependent across $l$, their impact on the distance becomes negligible as the dimensionality $p^\dagger$ grows large, in which sense we ``denoise'' the measurements $\bx_i$ and recover the noise-free component $\bmeta(\balpha_i)$. On the other hand, if $\bmeta(\balpha_i)$ is informative about $\balpha_i$ in the sense of Assumption \ref{Assumption: Matching} below, any two points found close in terms of the factor structure should also be close in terms of the underlying latent variables. Therefore, the nearest neighbors obtained by matching on the observables are similar in terms of the unobservables, which is the key building block of subsequent analysis. 

Figure \ref{figure:knn} gives a conceptual illustration of this idea. An artificial two-dimensional surface is embedded in a three-dimensional space.
If the requirements outlined before are satisfied,  $K$-NN matching for a particular unit $i$ (colored in red) would 
generate a local neighborhood (the region within the red “ball”).

\medskip
\FloatBarrier
\begin{figure}[!h]
	\centering
	\small
	\caption{$K$-Nearest Neighbors Matching}
	\includegraphics[width=.5\textwidth, height=.3\textheight]{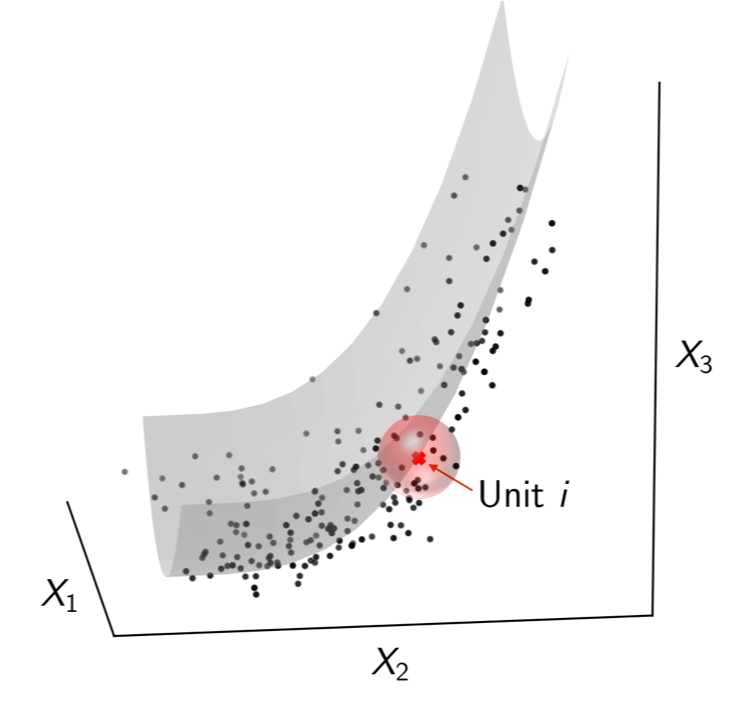}
	\label{figure:knn}
\end{figure}
\FloatBarrier

\subsubsection*{Local Principal Component Analysis}
\label{subsec: estimation, PCA}
This step makes use of the subsample labeled by $\ddagger$, i.e., the submatrix of $\bX$ with row indices in $\mathcal{R}^{\ddagger}$.   
Given a set of nearest neighbors $\mathcal{N}_i$ from the previous step, define a $p^\ddagger\times K$ matrix $\bX_{\subi}=(\bX^{\ddagger}_{\cdot j_1(i)}, \cdots, \bX^{\ddagger}_{\cdot j_K(i)})$. The subscript $\subi$ indicates that the data matrix is defined locally for unit $i$. 

If the neighbors we find for each unit $i$ are truly close in the latent variables, i.e., $\balpha_j\approx\balpha_i$ for all $j\in\mathcal{N}_i$, then the possibly full-rank matrix $\bH_{\subi}=(\bH^{\ddagger}_{\cdot j_1(i)}, \cdots, \bH^{\ddagger}_{\cdot j_K(i)})$ can be locally approximated by a low-rank structure:
\[
\bH_{\subi}\approx
\bF_{\subi}\bLambda_{\subi}',
\]
where $\bF_{\subi}\in\mathbb{R}^{p^\ddagger\times \ttd_i}$ and $\bLambda_{\subi}\in\mathbb{R}^{K\times \ttd_i}$
are termed \textit{local factors} and \textit{local factor loadings} respectively, and 
$\ttd_i$ is a user-specified parameter that governs the number of approximation terms. 
In general, we can write an approximately linear factor model:
\begin{equation}\label{eq: local representation}
	\bX_{\subi}=\bF_{\subi}\bLambda_{\subi}'+\bXi_{\subi}+\bU_{\subi},
\end{equation} 
where
$\bXi_{\subi}$ is a matrix of corresponding approximation/smoothing bias, and $\bU_{\subi}=(\bU^{\ddagger}_{\cdot j_1(i)}, \cdots, \bU^{\ddagger}_{\cdot j_K(i)})$ is the idiosyncratic error matrix.
This decomposition motivates the application of PCA to $\bX_{\subi}$:
\begin{equation}\label{eq: lpca}
	(\widehat{\bF}_{\subi}, \widehat{\bLambda}_{\subi})=
	\underset{\scalebox{0.6}{ $\tilde\bF_{\subi}\in\mathbb{R}^{p^\ddagger\times \ttd_i},  \tilde\bLambda_{\subi}\in\mathbb{R}^{K\times \ttd_i}$}}
	{\argmin}\tr\Big[\Big(\bX_{\subi}-\tilde\bF_{\subi}\tilde\bLambda_{\subi}'\Big)\Big(\bX_{\subi}-\tilde\bF_{\subi}\tilde\bLambda_{\subi}')'\Big]
\end{equation}
such that $\frac{1}{p^\ddagger}\tilde\bF_{\subi}'\tilde\bF_{\subi}=\bI_{\ttd_i}$ and $\frac{1}{K}\tilde\bLambda_{\subi}'\tilde\bLambda_{\subi}$ is diagonal.

The idea underlying \eqref{eq: lpca} is similar to the step of learning local tangent spaces in  \cite{Zhang-Zha_2004_SIAM}. The main difference is that  $K$-NN matching and PCA in my procedure are conducted on different rows of $\bX$. This is motivated by the fact that searching for nearest neighbors has implicitly used the information on  $\bu_i$. Without sample splitting, for units within the same local neighborhood, the nonlinear factor components $\bF_{\subi}\bLambda_{\subi}'+\bXi_{\subi}$ would be correlated with the noise $\bU_{\subi}$, rendering the standard PCA technique inapplicable. Row-wise sample splitting is a simple remedy, when the noise $u_{il}$ is independent (or weakly dependent) across $l$.

The idea of local PCA is illustrated in Figure \ref{figure:local tangent}.  Units around the red dot are approximately lying on a (local) linear tangent plane (colored in purple). Intuitively, this approximation is analogous to the local linear regression in the nonparametrics literature, though conditioning variables in this context are unobserved. More generally, if more leading local factors can be  differentiated from the noise, then a local \textit{nonlinear} principal surface can be constructed for a higher-order approximation of the underlying surface. However, the extracted principal components from the noisy data may not always be informative about the latent structure $\bH_{\subi}$. Instead, they could be partially or completely determined by the noise matrix $\bU_{\subi}$, and the potential local degeneracy of the nonlinear structure may further complicates this issue. Thus, we recommend users only take a few leading principal components associated with large eigenvalues. See formal discussion in Section \ref{subsec: main result, pca}.

\medskip
\FloatBarrier
\begin{figure}[!h]
	\centering
	\small
	\caption{Local Tangent Space Approximation}
	\includegraphics[width=.5\textwidth, height=.3\textheight]{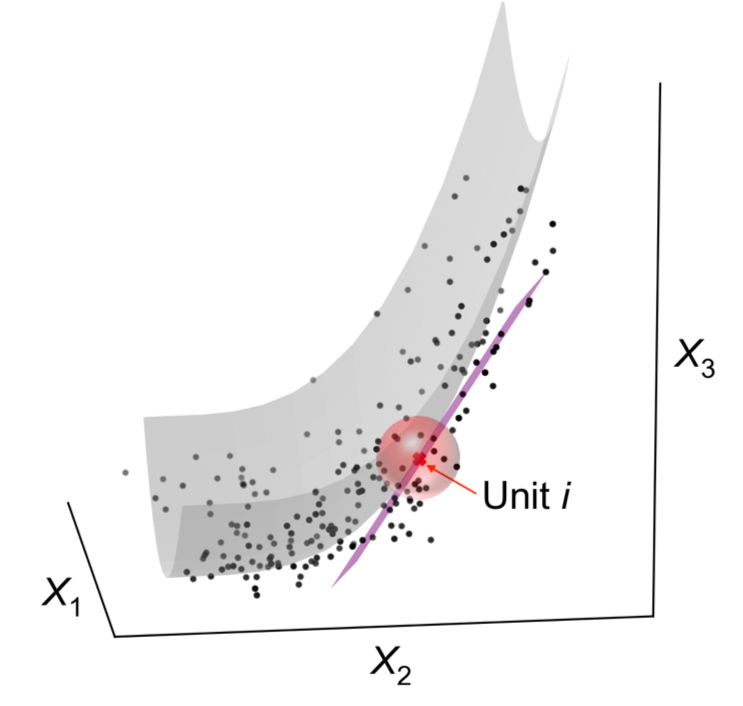}
	\label{figure:local tangent}
\end{figure}
\FloatBarrier
\medskip


\section{Main Results} \label{sec: main results}

To theoretically formalize the estimation procedure in Section \ref{sec: estimation}, we need to (i) ensure the closeness between $\balpha_j$ and $\balpha_i$ for $j\in\mathcal{N}_i$, and (ii) show that the linear factor structure locally extracted from the observables are ``consistent'' in some proper sense for the nonlinear factor structure (``signals'') of our interest. The first task is nontrivial since $\balpha_i$ is not observed by the researcher, and as described above, some indirect strategy is usually adopted to construct the desired neighborhood. Then, the key challenge is translating the distance measured in observables into that in unobservables  under appropriate conditions. 
On the other hand, the second task is complicated by the fact that the factors in approximation \eqref{eq: local representation} are of different strength and might be degenerate at some point(s) $\balpha\in\mathcal{A}$. In the following we will discuss each task and provide formal results.

\subsection{$K$-Nearest Neighbors Matching}\label{subsec: main result, knn}
The local neighborhood described in Section \ref{sec: estimation} is constructed indirectly based on observed noisy measurements $\bx_i$ of the latent $\balpha_i$. To show the closeness of the resulting nearest neighbors, we typically need to guarantee the noise $\bu_i$ is approximately negligible and the noise-free structure $\bmeta(\balpha_i)$ is informative about $\balpha_i$, which are formalized in Assumption \ref{Assumption: Matching}.
\begin{assumption}[Indirect Matching] 
	\label{Assumption: Matching}\leavevmode
	For some fixed positive constants $\rho_0$, $\lvarsig$, $\uvarsig$ and some positive sequence $a_{n}=o(1)$, the following conditions hold:
	\begin{enumerate}[label=(\alph*)]		
		\item $\underset{1\leq i,j\leq n}{\max}\;|\rho(\bX_{\cdot i}^\dagger, \bX_{\cdot j}^\dagger)-
		\rho(\bH_{\cdot i}^\dagger, \bH_{\cdot j}^\dagger)
		-\rho_0|\lesssim_\P a_{n}$;
		
		\item 
		$\underset{1\leq i,j\leq n \atop \balpha_i\neq \balpha_j}{\max}\;\frac{\rho(\bH_{\cdot i}^\dagger,\, \bH_{\cdot j}^\dagger)}{\|\balpha_i-\balpha_j\|^{\uvarsig}}\lesssim_\P 1$ and 
		$\underset{1\leq i,j\leq n \atop \balpha_i\neq \balpha_j}{\min}\;\frac{\rho(\bH_{\cdot i}^\dagger,\, \bH_{\cdot j}^\dagger)}{\|\balpha_i-\balpha_j\|^{\lvarsig}}\gtrsim_\P 1$.
	\end{enumerate}
\end{assumption}

Condition (a) formalizes the idea of ``denoising'' the data by choosing a proper distance $\rho(\cdot, \cdot)$. It guarantees that the distance of the observables between any pair of units is approximately determined by that of the noise-free components, up to a fixed constant $\rho_0$. Then, the closeness in terms of the observables can be translated into that of the latent factor structure. This requirement is 
usually mild, if we have many features and ``average'' them in a proper way.

On the other hand, condition (b) concerns the noise-free structure only. 
The upper bound condition is usually mild and can be deduced from the smoothness of $\bmeta$ imposed in Assumption \ref{Assumption: Regularities}, given a particular choice of $\rho(\cdot,\cdot)$.
By contrast, the lower bound  is the key requirement for $\bx_i$ to be informative about $\balpha_i$. Intuitively, it says if $\bH^\dagger_{\cdot i}$ is close to $\bH^\dagger_{\cdot j}$ in terms of the distance $\rho(\cdot,\cdot)$, 
$\balpha_i$ needs to be close to $\balpha_j$. The parameters $\lvarsig$ and $\uvarsig$ govern how the distance of the unobservables and that of the observables are linked. Typically, these requirements depend not only on the nonlinear factor structure itself, but also the chosen distance function.
The idea underlying such informativeness requirement is also related to the completeness condition widely
used in econometric identification problems \citep{schennach2020mismeasured}. 
Roughly speaking, for a family of distributions, completeness requires that the density of a variable sufficiently vary across
different values of the conditioning variable. Analogously, the lower bound in (b) amounts
to saying that there is enough variation observed on the latent surface for different
values of the latent variables.

To gain more intuition about the two conditions, we discuss several specific choices of $\rho(\cdot,\cdot)$ that are common in the literature. Formal technical results are deferred to Appendix \ref{sec: appendix matching}.

\begin{exmp}[Euclidean distance]\label{exmp: euclidean}
	Let $\rho(\bv_i, \bv_j)=\frac{1}{p}\|\bv_i-\bv_j\|^2$ for any $\bv_i, \bv_j\in\mathbb{R}^p$.  
	Since $\bU$ and $\mathscr{F}$ are mean independent and $u_{il}$ is independent over $i$ and $l$ conditional on $\mathscr{F}$, we expect 
	\begin{align*}
	\frac{1}{p^\dagger}\|\bX_{\cdot i}^\dagger-\bX_{\cdot j}^\dagger\|^2
	&\approx
	\frac{1}{p^\dagger}\|\bH_{\cdot i}^\dagger-\bH_{\cdot j}^\dagger\|^2+
	\frac{1}{p^\dagger}\|\bU_{\cdot i}^\dagger\|^2+
	\frac{1}{p^\dagger}\|\bU_{\cdot j}^\dagger\|^2.
	\end{align*}
	To make condition (a) hold one has to assume (conditional) homoskedasticity of $\bu_i$:  $\frac{1}{p^\dagger}\sum_{l\in\mathcal{R}^\dagger}\E[u_{il}^2|\mathscr{F}]=\sigma^2$ for all $i\in [n]$. Unfortunately, this is usually unrealistic in many applications.  
	
	In Appendix \ref{sec: appendix matching} we also verify condition (b) under intuitive sufficient conditions. The key requirement is that for every $\varepsilon>0$,
	\begin{equation}\label{eq: nonsingularity}
	\underset{\Delta\rightarrow 0}{\lim}\;\underset{n,p^\dagger\rightarrow\infty}{\limsup}\;
	\P\Big\{\underset{1\leq i\leq n}{\max}\;
	\underset{j:\rho(\bH^\dagger_{\cdot i},\bH^\dagger_{\cdot j})<\Delta}{\max}\;
	\|\balpha_i-\balpha_j\|>\varepsilon\Big\}=0.
	\end{equation}
	This can be understood as an ``identification'' condition for $\balpha_i$, which says 
	the difference in latent $\balpha_i$ can be revealed by the noise-free structure 
	$\bH_i^\dagger$ as $n,p^\dagger\rightarrow\infty$, though exact identification of $\balpha_i$
	is impossible without further restrictions.
	
	When conditions described above hold, we can formally show that
	\[
	a_n=((\log n)/p)^{1/4}, \quad \rho_0=2\sigma^2,\quad \lvarsig=\uvarsig=2.
	\]
	See Theorem \ref{thm: IMD hom} for details.
	\hfill\large$\lrcorner$
\end{exmp}

\begin{exmp}[Pseudo-max distance]\label{exmp: pseudo-max}
	Let $\rho(\bv_i, \bv_j)=\frac{1}{p}\max_{l\neq i, j}|(\bv_i-\bv_j)'\bv_\ell|$ for a sequence of $p$-vectors $\{\bv_i: 1\leq i\leq n\}$, which was proposed by \cite{Zhang-Levina-Zhu_2017_BIMA} in the graphon estimation context.
	Since the distance between any pair of vectors is measured using a \textit{third} vector, we expect that 
	\[
	\frac{1}{p^\dagger}(\bX_{\cdot i}^\dagger-\bX_{\cdot j}^\dagger)'\bX_{\cdot \ell}
	\approx 
	\frac{1}{p^\dagger}(\bH_{\cdot i}^\dagger-\bH_{\cdot j}^\dagger)'\bH_{\cdot \ell}.
	\]
	The conditional homoskedasticity assumption is unnecessary in this case, making the pseudo-max distance more appealing than the (squared) Euclidean distance. 
	
	To verify condition (b), we impose the same  condition as in \eqref{eq: nonsingularity} except that $\rho(\cdot, \cdot)$ is taken to be the pseudo-max distance. To obtain a more accurate translation from the distance of observables into that of unobservables, we also impose a technical condition in Appendix \ref{sec: appendix matching} termed \textit{non-collapsing}.
	It requires that if we project the $r$-dimensional latent surface generated by $\{\eta_l: l\in\mathcal{R}^\dagger\}$ onto the tangent space at any data point, the dimensionality of the projection \textit{does not} drop.
	
	When conditions described above hold, we can formally show that 
	\[
	a_n=((\log n)/p)^{1/2},  \quad \rho_0=0, \quad \lvarsig=\uvarsig=1.
	\]
	See Theorem \ref{thm: IMD hsk} for details.
	\hfill\large$\lrcorner$
\end{exmp}

\begin{exmp}[Distance of average]\label{exmp: dist of average}
	Let $\rho(\bv_i, \bv_j)=\frac{1}{p}|\bm{1}_p'(\bv_i-\bv_j)|$ for any $\bv_i, \bv_j\in\mathbb{R}^p$. This amounts to simply averaging all features over $l\in[p]$ and then taking the distance of the scalar-valued aggregate feature between each pair of units. 
	In this case, condition (a) can be easily verified based on the mild assumption that $\E[u_{il}|\mathscr{F}]=0$. 
	
	To verify part (b), the key condition typically required is that the probability limit of the average function $\frac{1}{p^\dagger}\sum_{l\in\mathcal{R}^\dagger}\eta_l(\cdot)$ is \textit{strictly monotonic}. 
	This condition sometimes may be too stringent. For example, in a linear factor model with 
	$\eta_l(\alpha_i)=f_l\alpha_i$, $\frac{1}{p^\dagger}\sum_{l=1}^{p^\dagger}\eta_{l}(\alpha_i)$ could be completely uninformative about $\alpha_i$ if $\frac{1}{p^\dagger}\sum_{l=1}^{p^\dagger}f_{l}\rightarrow_\P 0$. However, many $\eta_l(\alpha_i)$'s are still  informative about $\alpha_i$ as long as $f_l$'s are nonzero,  and thus the difference in latent variables can still be revealed using, for instance, the pseudo-max distance discussed before.
	
	When conditions described above hold, we can formally show that 
	\[
	a_n=((\log n)/p)^{1/2},  \quad \rho_0=0, \quad \lvarsig=\uvarsig=1.
	\]
	See Theorem \ref{thm: dist of average} for details.
	\hfill\large$\lrcorner$
\end{exmp}

\begin{remark}[Data transformations]\label{remark: other metrics}
	We emphasize that $\rho(\cdot, \cdot)$ in Assumption \ref{Assumption: Matching} is generic, which can accommodate transformations of the original features other than the average in Example \ref{exmp: dist of average}. 
	For instance, define a possibly vector-valued function $\bh_i:\mathbb{R}^{p^\dagger}\mapsto \mathbb{R}^{\ttd_h}$ for each $i\in [n]$ that transforms the observed features 
	$\bX_{\cdot i}^\dagger$ into a $\ttd_h$-vector of new features $\bh_i(\bX_{\cdot i}^\dagger)$, and then conduct matching on the transformed features in terms of the Euclidean norm. In this case, the distance between units $i$ and $j$ is given by
	\[
	\rho(\bX_{\cdot i}^\dagger, \bX_{\cdot j}^\dagger)=
	\|\bh_i(\bX_{\cdot i}^\dagger)-\bh_j(\bX_{\cdot j}^\dagger)\|.
	\]
	In practice, introducing such transformations may be useful since it allows for rescaling or reweighting different observed variables to obtain features that are more informative about the latent variables.
\end{remark}

Now, we present our first main result, which characterizes the indirect matching discrepancy of nearest neighbors.
\begin{thm}[Indirect Matching] \label{thm: IMD}
	Suppose that Assumptions \ref{Assumption: Regularities} and \ref{Assumption: Matching} hold. 
	If $\frac{\log (n/K)}{K}=o(1)$ and $\frac{K\log n}{n}=o(1)$, then, 
	\[
	\max_{1\leq i\leq n}\max_{1\leq k\leq K}
	\|\balpha_i-\balpha_{j_k(i)}\|
	\lesssim_\P (K/n)^{\uvarsig/(\lvarsig r)}+a_n^{1/\lvarsig}.
	\]
\end{thm}

As shown in the above theorem, the matching can be made up to errors consisting of two terms in an asymptotic sense. The first part $(K/n)^{\uvarsig/(\lvarsig r)}$ reflects the direct matching discrepancy for $\balpha_i$. It grows quickly with the number of latent variables, which coincides with the results in the nearest neighbors matching literature (e.g., \citealp{Gyorfi-et-al_2002_bookchapter}). The second term $a_n^{1/\lvarsig}$ arises from the existence of the idiosyncratic error $\bu_i$. In the three examples described above, the distance is defined based on certain averages across different features, and thus the impact of $\bu_i$ vanishes as $p$ grows large. 

Note that if $\balpha_i$'s were observed, matching could be directly implemented on it with the number of matches $K$ fixed. In this paper, however, $\balpha_i$ is unobservable, and matching can only be done on their noisy measurements, leading to the indirect matching discrepancy characterized by the second term above. Using a fixed (or small) number of nearest neighbors is unable to further reduce bias and thus is not recommended in this scenario.

Theorem \ref{thm: IMD} generalizes the existing results in the literature that relies on specific metrics and provides a way to precisely quantify the indirect matching discrepancy. For example, \cite{Zhang-Levina-Zhu_2017_BIMA} proposes the pseudo-max distance for neighborhood smoothing in the graphon estimation context, but they provide no results regarding the distance in the latent variables. Moreover, if the conditions specified in Theorem \ref{thm: IMD hsk} hold and $\eta_l(\balpha_i)=\eta_l(\alpha_i, \varpi_l)$ for scalar latent variables $\alpha_i$ and $\varpi_l$, Theorem \ref{thm: IMD} implies that the neighborhood smoothing (local average) estimator of the factor structure can achieve a sup-norm convergence rate of order $O(n^{-\frac{1}{3}})$, up to $\log n$ terms, which improves upon the $L^2$-type convergence rate of order $O(n^{-1/4})$, up to $\log n$ terms, given in \cite{Zhang-Levina-Zhu_2017_BIMA}. See more detailed discussion about the uniform convergence rate in Section \ref{subsec: main result, pca}.

\subsection{Local Principal Component Analysis}\label{subsec: main result, pca}
Now, we proceed to discuss the properties of local principal component analysis. Throughout this subsection, we assume a set $\mathcal{N}_i$ of $K$ nearest neighbors for each $i\in[n]$ has been obtained. 
Recall that PCA is applied to the submatrix $\bX_{\subi}$ of $\bX$ formed by a subset of observed features indexed by $\ddagger$ for the $K$ nearest neighbors of each unit $i$. Define the constants  $\delta_n=(K\wedge p)^{1/2}/\sqrt{\log (n\vee p)}$ and $h_n=(K/ n)^{\uvarsig/(\lvarsig r)}$.

We need some regularity conditions on the local approximation of $\bH_{\subi}$. Formally, we consider the following $L^2$-approximation
\begin{equation}\label{eq: decomposition H}
\bH_{\subi}=\bF_{\subi}\bLambda_{\subi}'+\bXi_{\subi},
\end{equation}
where $\bF_{ \subi}=
\E^\ddagger[\bH_{\subi}\bLambda_{\subi}]
\E^\ddagger[\bLambda_{\subi}'\bLambda_{\subi}]^{-1}$ and $\E^\ddagger$ denotes the expectation operator conditional on $\bX^\dagger$. Accordingly, $\bXi_{\subi}$ should be understood as a matrix of $L^2$-projection errors. 
We introduce a diagonal (scaling) matrix $\bUpsilon_{\subi}=\diag\{\upsilon_{1,\subi}, \cdots, \upsilon_{\ttd_i,\subi}\}$, denoting the possibly heterogeneous strength of local factors for the neighborhood of unit $i$. Without loss of generality, we assume $\upsilon_{1, \subi}\geq \upsilon_{2,\subi}\geq\cdots\geq \upsilon_{\ttd_i,\subi}$.

\begin{assumption}[Local Approximation] \label{Assumption: LPCA}
	$\bH_{\subi}$  admits the decomposition \eqref{eq: decomposition H}
    with the following conditions satisfied:
	\begin{enumerate}[label=(\alph*)]
		\item 
		For each $i\in[n]$, there exists some diagonal matrix $\bUpsilon_{\subi}$ such that 
		\begin{gather*}
		\underset{1\leq i\leq n}{\max}\|\bLambda_{\subi}\bUpsilon_{\subi}^{-1}\|_{\max}\lesssim_\P 1,\\ 
		1\lesssim_\P \min_{1\leq i\leq n}s_{\min}\Big(\frac{1}{K}\bUpsilon_{\subi}^{-1}\bLambda_{\subi}'\bLambda_{\subi}\bUpsilon_{\subi}^{-1}\Big)\leq
		\max_{1\leq i\leq n}s_{\max}\Big(\frac{1}{K}\bUpsilon_{\subi}^{-1}\bLambda_{\subi}'\bLambda_{\subi}\bUpsilon_{\subi}^{-1}\Big)\lesssim_\P 1.
		\end{gather*}
		Either $\upsilon_{j,\subi}/\upsilon_{j+1,\subi}\lesssim 1$ or $\upsilon_{j,\subi}/\upsilon_{j+1,\subi}\rightarrow\infty$ holds for $j\in[\ttd_i-1]$;
		\item For some $m\leq\bar{m}$,  
        $\underset{1\leq i\leq n}{\max}\|\bXi_{\subi}\|_{\max}\lesssim_\P h_n^m=o(\upsilon_{\ttd_i,\subi})$ and $\delta_n^{-1}/\upsilon_{\ttd_i,\subi}=o(1)$;
		
		\item $1\lesssim_\P\underset{i\in [n]}{\min}\; s_{\min}\Big(\frac{1}{p^\ddagger}\bF_{\subi}'\bF_{\subi}\Big)\leq
		\underset{i\in[n]}{\max}\; s_{\max}\Big(\frac{1}{p^\ddagger}\bF_{\subi}'\bF_{\subi}\Big)\lesssim_\P 1$.
	\end{enumerate}
\end{assumption}

Among the three conditions, (a) and (b) are usually mild and similar to those required for sieve approximation in the nonparametric regression literature. Typically, they can be verified by properly choosing an approximation basis.  For example, let
$\balpha\in\mathcal{A}\mapsto \blambda_{\subi}(\balpha):=(\lambda_1(\balpha), \cdots, \lambda_{\ttd_{\subi}}(\balpha))'$  
be an $r$-variate monomial basis of degree no greater than $m-1$ centered at $\balpha_i$ (including the constant term), with a typical element  given by $(\balpha-\balpha_i)^{\bq}$ for some $\bq=(q_1, \cdots, q_r)'$ such that $\sum_{j=1}^rq_j\leq m-1$. 
The length of this basis is $\ttd_i=\binom{r+m-1}{r}$.  
Define 
$\bLambda_{\subi}=(\blambda_{\subi}(\balpha_{j_1(i)}), \cdots, \blambda_{\subi}(\balpha_{j_K(i)}))'$.  
Accordingly, we can let 
$\bUpsilon_{\subi}=\diag(1, h_n\bI_{\ell_1}, \cdots, h_n ^{m-1}\bI_{\ell_{m-1}})$ with $\ell_j=\binom{r+j-1}{j}$ for $j\in[m-1]$,  
which properly normalizes the approximation basis functions of different order. 
Then, parts (a) and (b) can be verified under the regularity conditions on $\bmeta(\cdot)$ specified in Assumption \ref{Assumption: Regularities}.

The requirement on $\upsilon_{j,\subi}/\upsilon_{j+1,\subi}$ in part (a) merely formalizes the possibility that the local approximation terms may be of different strength in an asymptotic sense. Consider the example of the monomial basis discussed above. The first element corresponds to the constant term whose magnitude 
is (asymptotically) greater than the next $r$ elements that corresponds to local polynomials of degree one. The other elements correspond to local polynomials of the second or higher order whose magnitude is even smaller.  On the other hand, the condition $\delta_n^{-1}/\upsilon_{\ttd_i,\subi}=o(1)$ in part (b) is key for the local factors to be consistently estimable. Intuitively, $\upsilon_{\ttd_i, \subi}$ determines the signal strength of the ``weakest'' factors one desires to extract, which has to be stronger than the strength $\delta_n^{-1}$ of the noise.

Part (c) should
be taken with some extra care. Such conditions, usually termed non-degeneracy of factors, are common in linear factor analysis. In the nonlinear setting, however, local factors $\bF_{\subi}$ have a specific meaning: they are transformations of \textit{derivatives} of $\bmeta$. The non-degeneracy condition (c) indeed reflects to what extent the functions $\{\eta_l: l\in\mathcal{R}^\ddagger\}$ are nonlinear at each point in the support. 
In general, the degree of nonlinearity may be heterogeneous across the evaluation points, and condition (c) could fail with a \textit{universal} choice of the number of local factors such as $\ttd_i=\binom{r+m-1}{r}$ discussed above.  
Therefore, it is important in this context to allow $\ttd_i$ to vary across $i$ (different local neighborhoods), which substantially weakens or rationalizes this nonlinearity requirement. To gain more intuition, we discuss two examples below.

\begin{exmp}[Linear Factor Model]
	Consider a linear factor model with an intercept: $\eta_l(\alpha_i)=c_0+\varpi_{l}\alpha_{i}$. 
	With $c_0\neq 0$ and $\varpi_l$ varying sufficiently across $l$, 
	Assumption \ref{Assumption: LPCA}(c) holds with $\ttd_i=2$, i.e.,   
	$\frac{1}{p^\ddagger}\sum_{l\in\mathcal{R}^\ddagger}(c_0+\varpi_l\alpha_i, \varpi_l)'(c_0+\varpi_l\alpha_i, \varpi_l)$ has the minimum eigenvalue bounded away from zero. 
	By contrast, when $c_0=0$, Assumption \ref{Assumption: LPCA}(c) holds with $\ttd_i=1$ instead, if $\frac{1}{p^\ddagger}\sum_{l\in\mathcal{R}^\ddagger}\varpi_{l}^2 \gtrsim 1$.
	\hfill\large$\lrcorner$
\end{exmp}

\begin{exmp}[Cosine function]
	Consider a nonlinear factor model based on a cosine transformation: $\eta_l(\alpha_i)=\cos(\varpi_l\alpha_i)$ with $\varpi_l$ following the uniform distribution $\mathsf{U}[0,1]$. To achieve linear approximation of $\eta_l(\cdot)$ locally at $\alpha_i$, we generally need a linear two-factor model: $\eta_l(\alpha)\approx \cos(\varpi_l\alpha_i) -\varpi_l\sin(\varpi_l\alpha_i)(\alpha-\alpha_i)$ for $\alpha\approx\alpha_i$.  
    Given the definition of $\bF_{\subi}$, it can be shown under mild conditions that
	\[
	\frac{1}{p^\ddagger}\bF_{\subi}'\bF_{\subi}\rightarrow_\P 
	\int_{[0,1]}
	\left[
	\begin{array}{cc}
		\cos^2(\varpi\alpha_i)&-\varpi\sin(\varpi\alpha_i)\cos(\varpi\alpha_i)\\
		-\varpi\sin(\varpi\alpha_i)\cos(\varpi\alpha_i)&\varpi^2\sin^2(\varpi\alpha_i).
	\end{array}
	\right]	d\varpi.
	\] 
    At the point $\alpha_i=0$, the first derivative $\sin(\varpi\alpha_i)$  is 
    exactly zero for all $\varpi$ , and thus a single factor ($\ttd_i=1$) suffices for  a local linear approximation of $\bmeta(\cdot)$. 
    In general, however, the limiting matrix above is non-degenerate (the minimum eigenvalue is strictly greater than zero), and two factors may be needed to achieve the desired linear approximation.
    \hfill\large$\lrcorner$
\end{exmp}

In Appendix \ref{sec: appendix matching} we provide a general result about the verification of Assumption \ref{Assumption: LPCA} under primitive conditions, and Section SA-3 in the SA gives further discussion with concrete examples.

Now, we present the uniform convergence properties of the estimated local factors and loadings, the second main result of this paper. 

\begin{thm} 
	\label{thm: uniform convergence of factors}
	Under Assumptions \ref{Assumption: Regularities}, \ref{Assumption: Matching} and \ref{Assumption: LPCA}, if  $(np)^{\frac{2}{\nu}}\delta_{n}^{-2}\lesssim 1$, then  
	there exists $\bR_{\subi}$ such that for each $\ell\in[\ttd_i]$, 
	\[
	\begin{split}
		&\max_{1\leq i\leq n}\|\widehat{\bF}_{\cdot \ell,\subi}-\bF_{\subi}((\bR_{\subi}')^{-1})_{\cdot \ell}\|_{\max}\lesssim_\P
		\delta_n^{-1}\upsilon_{\ell,\subi}^{-1}
		+h_n^{m}\upsilon_{\ell,\subi}^{-1},\\
		&\max_{1\leq i\leq n}\|\widehat{\bLambda}_{\cdot \ell,\subi}-\bLambda_{\subi}\bR_{\cdot \ell,\subi}\|_{\max}\lesssim_\P
		\delta_n^{-1}+h_n^{m}.
	\end{split}
	\]
	Moreover, $1\lesssim_\P \underset{1\leq i\leq n}{\min}s_{\min}(\bR_{\subi})\leq \underset{1\leq i\leq n}{\max}s_{\max}(\bR_{\subi})\lesssim_\P 1$.
\end{thm}

Recall that Theorem \ref{thm: IMD} showed that the distance between $\balpha_i$ and its nearest neighbors $\{\balpha_{j_k(i)}: 1\leq k\leq K\}$ is diminishing as $n$ and $p$ diverge. Theorem \ref{thm: uniform convergence of factors} above further shows that the loading matrix $\bLambda_{\subi}$, as the ``approximation basis'', can be consistently estimated up to a rotation, which indeed characterizes the relationship of different units within each local neighborhood. Accordingly, the factor matrix $\bF_{\subi}$, as (transformations of) derivatives of $\bmeta(\cdot)$, can also be consistently estimated, which characterizes the local nonlinearity of the latent surface. 

The matrix convergence result above is in terms of the sup-norm, which also holds uniformly over the local neighborhoods indexed by $\subi$. 
The estimation error of both $\widehat{\bF}_{\subi}$ and $\widehat{\bLambda}_{\subi}$ consists of two parts. 
The first term in each upper bound, i.e., $\delta_n^{-1}\upsilon_{\ell, \subi}^{-1}$ and $\delta_{n}^{-1}$, reflects the estimation variance. Since latent variables $\balpha_i$'s are not observed, the variability of the local factor and loading estimators relies on the sizes of both dimensions $K$ and $p$, which is akin to the results in classical linear factor analysis \citep{Bai_2003_ECMA}. 
The second term in each upper bound, i.e., $h_n^m\upsilon_{\ell, \subi}^{-1}$ and $h_n^m$, arises from the smoothing bias and is not present in linear factor analysis. 

By Assumption \ref{Assumption: LPCA}(a), the local factors in $\bF_{ \subi}$ may be of heterogeneous strength, reflected by the magnitude of the associated loadings. Consequently, the estimation error of $\widehat{\bF}_{\subi}$ involves a penalty factor $\upsilon_{\ell, \subi}^{-1}$ that is inversely related to the strength of different factors. This is in line with the intuition: higher-order approximation terms are  weaker signals about the latent structure, which is less precisely estimated.
As emphasized before, the rate condition $\delta_{n}^{-1}/\upsilon_{\ttd_i,\subi}\rightarrow\infty$ is key to ensure the ``weakest'' local factors can still be differentiated from the remainder in Equation \eqref{eq: local representation}. 

To get some sense of the rate conditions required, consider the simple case where $p\asymp n$ and $\bUpsilon=\diag(1, h\bI_{\ell_1}, \cdots, h^{m-1}\bI_{\ell_{m-1}})$. 
Assumption \ref{Assumption: LPCA}(b) needs  $(n/K)^{\frac{2m-2}{r}}=o(K/\log n)$, and the additional rate restriction imposed in the theorem can be  simplified to $n^{\frac{4}{\nu}}\lesssim K/\log n$. If we let $K=n^{A}$, $A>\max\{\frac{4}{\nu}, \frac{2m-2}{2m-2+r}\}$ suffices. In particular, if $\nu$ is sufficiently large, this restriction can be satisfied by setting, for example, $K\asymp n^{\frac{2m}{2m+r}}$, which coincides with the MSE-optimal choices of tuning parameters in nonparametric regression.

Finally, using Theorem \ref{thm: uniform convergence of factors}, we immediately have the following corollary.
\begin{coro}\label{coro: consistency of latent mean}
	Under the conditions of Theorem \ref{thm: uniform convergence of factors}, $$\max_{1\leq i\leq n}
	\Big\|\widehat{\bF}_{ \subi}\widehat{\bLambda}_{\subi}'-\bH_{\subi}\Big\|_{\max}\lesssim_\P \delta_n^{-1}+h_n^m.$$
\end{coro}
This corollary shows that the latent nonlinear factor component can be consistently estimated, and this error bound holds uniformly over all units and features. 
Note that if $\balpha_i$'s were observed, the natural alternative to estimate the heterogeneous functions $\eta_l$'s (and thus $\bH$) would be the cross-sectional nonparametric regression of $x_{il}$ on $\balpha_i$ for each $l\in[p]$, whose optimal uniform convergence rate is $O(n^{-\frac{m}{2m+r}})$, up to $\log n$ terms \citep{stone1982optimal}.
Corollary \ref{coro: consistency of latent mean} shows that this optimal rate can be attained  by the local PCA estimator
when $K\lesssim p$, $\bar{\varsigma}=\underline{\varsigma}$, and an optimal number of nearest neighbors is used to balance the two terms in the above error bound.
This result appears to be new to the literature, to the best of our knowledge. 

Also note that our setup in this paper allows the latent functions $\eta_l$'s to be heterogeneous across $l$ in general. When $\eta_l(\balpha_i)$ admits a special structure such as  $\eta_l(\balpha_i)=\eta(\balpha_i,\bm\varpi_l)$ for some bivariate function $\eta$ and some random variables $\bm\varpi_l$, the optimal convergence rate for estimating the global function $\eta$ may be different than the optimal rate for the cross-sectional nonparametric regression described above, depending on the smoothness of $\eta$ and the dimensions of the arguments $\balpha_i$ \textit{and} $\bm\varpi_l$. See \cite{Gao-Lu-Zhou_2015_AoS} for a discussion of the $L^2$-type minimax optimal rate in the special case of graphon estimation.

\begin{remark}[Comparison with global approximation methods]
	It is well known in the literature that the nonlinear factor structure \eqref{eq: model} can also be globally approximated 
	using the idea of singular value decomposition of functions. Suppose that $\eta_l(\balpha_i)=\eta(\balpha_i, \bm\varpi_l)$ for some bivariate function $\eta$ and random variables $\bm\varpi_l$. It can be shown that, under some regularity conditions, $\eta(\balpha, \bm\varpi)=\sum_{\ell=1}^{\infty}s_\ell u_\ell(\balpha)v_\ell(\bm\varpi)\approx \sum_{\ell=1}^{R}s_\ell u_\ell(\balpha)v_\ell(\bm\varpi)$ for some singular values $s_\ell$ and eigenfunctions $u_\ell$ and $v_\ell$, if the number of terms $R\rightarrow\infty$  \citep[see, e.g.,][]{griebel2014approximation,griebel2019singular}. This motivates low-rank approximation of nonlinear factor models. For example, one can apply PCA to the whole matrix $\bX$ directly with the number of principal components growing large as $n,p\rightarrow\infty$, or design other methods based on similar ideas \citep[see, e.g.,][]{fernandez2021low}. By contrast, the proposed method in this paper is based on a local approximation of the nonlinear factor structure, where the number of factors in each local neighborhood can be fixed while the approximation bias is determined by the number of nearest neighbors $K$. As discussed before, the local PCA estimator can achieve the optimal uniform convergence rate for nonparametric regression, while it is unknown if methods based on the global approximation strategy described above can achieve the same rate.
\end{remark}

\begin{remark}[Selecting the number of local factors]\label{remark: number of factors}
	In this nonlinear factor model, $\ttd_i$ is the user-specified number of ``factors'' extracted in each local neighborhood $\subi$, which plays a similar role as the degree of the polynomial in local polynomial regression. 
	One can investigate the strength of the (local) eigenvalues and extract all factors (approximation terms in \eqref{eq: local representation}) that are asymptotically stronger than the idiosyncratic errors. 
	This ensures that the smoothing bias is no greater than the variance asymptotically.
\end{remark}

\begin{remark}[Determining the number of latent variables]\label{remark: number of latent confounders}
	In this nonlinear factor model, the true number of latent variables $r$ is also the dimension of local tangent spaces of the underlying manifold (see Figure \ref{figure:local tangent}). This implies that $r$ can be determined by examining the number of linear terms in the local approximation of the latent functions $\{\eta_l: l\in\mathbb{R}^\ddagger\}$. To fix ideas, consider the first-order Taylor expansion of $\eta_l(\cdot)$ at the $i$th unit:
	\[
	x_{jl}=\eta_l(\balpha_j)+u_{jl}
	=\eta_l(\balpha_i)+\nabla\bmeta_l(\balpha_i)'(\balpha_j-\balpha_i)+\xi_{jl}+u_{jl},\quad j\in\mathcal{N}_i,
	\]
	where $\xi_{jl}$ is the approximation error.
	Typically, if the magnitude of the noise is relatively small, the leading factor associated with the largest eigenvalue in local PCA at $\balpha_i$ corresponds to the ``local constant term'' $\eta_l(\balpha_i)$ (monomial basis of degree zero). The next few factors are associated with much smaller eigenvalues than the first one and correspond to the ``local linear terms'' $\nabla\bmeta_l(\balpha_i)'(\balpha_j-\balpha_i)$, but they are still stronger than the remainder asymptotically. The number of such linear terms is also the true number of latent variables $\balpha_i$. Using this fact, we can design a feasible procedure to determine $r$ in practice. 
	For instance, we can start with a relatively large $K$, investigate the differing strength of local factors, and in particular check the number of local factors associated with eigenvalues of the second largest magnitude, say $r_i$. As discussed before, the nonlinearity pattern of the latent space may be complex and varies across the evaluation points. Thus, one may want to repeat this procedure for different units and take $r=\max_{1\leq i\leq n}r_{i}$. A formal study of this procedure is left for future research. 
\end{remark}

So far we have focused on the case where the data matrix $\bX$ is complete, but in many applications $\bX$ may have missing entries. The proposed local PCA method is robust with respect to mild missing value issues and thus can still be applied to some policy evaluation problems such as synthetic controls. Detailed discussion is deferred to Section \ref{sec: application}.

\subsection{Covariates Adjustment} \label{subsec: extensions}

The analysis so far has focused on Equation \eqref{eq: HD covariate}, which assumes $\bx_i$ takes a purely nonlinear factor structure. However, high-rank components may exist in $\bx_i$, and it is the residuals that take a possibly nonlinear factor structure, as described by Equation \eqref{eq: HD covariate, high-rank} below:
\begin{equation}\label{eq: HD covariate, high-rank}
	\bx_{i}=
	\bW_i\bvth+\bmeta(\balpha_i)+\bu_{i}, \quad \E[\bu_{i}|\mathscr{F},\{\bW_i:1\leq i\leq n\}]=\bm{0},
\end{equation}
where $\bW_i=(\bw_{i,1}, \cdots, \bw_{i,q})\in\mathbb{R}^{p\times q}$ is a matrix of observed covariates.
It can be viewed as a linear regression of $\bx_i$ on $q$ regressors $\bw_{i,1}, \cdots, \bw_{i,q}$ with possibly nonlinear fixed effects $\bmeta(\balpha_i)$. 
In this context $\bW_i$ needs to have sufficiently high-rank variation, otherwise they will be too collinear with the unspecified low-rank component $\bmeta(\balpha_i)$ and $\bvth$ cannot be identified. This is similar to the identification condition for panel data regression with interactive fixed effects.

The main analysis of this paper can be applied once a consistent estimator of $\bvth$ is available. It can be obtained using the idea of partially linear regression. Specifically, we propose the following procedure: 
\begin{enumerate}[label=(\alph*)]\setlength\itemsep{.01em}
	\item Split the row index set  into three (non-overlapping) portions: $[p]=\mathcal{R}^\ddagger\cup
	\mathcal{R}^\ddagger\cup\mathcal{R}^\wr$.
	
	\item On $\mathcal{R}^\dagger\cup\mathcal{R}^\ddagger$, for each $\ell=1, \ldots, q$, 
	apply Algorithm \hyperref[algorithm]{1} in Section \ref{sec: estimation} to $\{\bw_{i,\ell}: i\in[n]\}$. Obtain residuals  $\widehat{\be}_{i,\ell}:=\bw_{i,\ell}-\widehat{\bw}_{i,\ell}$. Use $\mathcal{R}^\dagger$ for $K$-NN matching and $\mathcal{R}^\ddagger$ for (local) PCA.
	
	\item  On $\mathcal{R}^\dagger\cup\mathcal{R}^\ddagger$, apply Algorithm 1 to $\{\bx_{i}: i\in[n]\}$. Let the obtained residuals be $\widehat{\bu}^{\natural}_{i}=\bx_{i}-\widehat{\bx}_{i}$.
	
	\item Let $\widehat{\be}_{i}=(\widehat{\be}_{i,1},\cdots, \widehat{\be}_{i,q})'$. Estimate $\bvth$ by
	\[
	\widehat{\bvth}=\Big(\frac{1}{n}\sum_{i=1}^{n}\widehat{\be}_{i}\widehat{\be}_{i}'\Big)^{-1}\Big(\frac{1}{n}\sum_{i=1}^{n}\widehat{\be}_{i}\widehat{\bu}^{\natural}_{i}\Big).
	\]
	
	\item On $\mathcal{R}^\ddagger\cup\mathcal{R}^\wr$, apply Algorithm 1 to $\{\bx_{i}-\bW_{i}\widehat{\bvth}: i\in[n]\}$. Use $\mathcal{R}^\ddagger$ for $K$-NN matching and $\mathcal{R}^\wr$ for (local) PCA. The final output of interest is the index sets for nearest neighbors $\mathcal{N}_i$, local factors $\widehat{\bF}_{\subi}$ and loadings $\widehat{\bLambda}_{\subi}$ from this step.
\end{enumerate}
Under additional regularity conditions on $\bW_i$, it can be shown that $\widehat{\bvth}$ converges to $\bvth$ sufficiently fast and the main results established previously still hold for $\mathcal{N}_i$, $\widehat{\bF}_{\subi}$ and $\widehat{\bLambda}_{\subi}$. 
Formal analysis is available in Section SA-2 of the SA and is omitted here to conserve space.


\section{Simulations} \label{sec: simulations}

We conduct a Monte Carlo investigation of the finite sample performance of the proposed method. We consider two nonlinear factor models for continuous data and one for discrete data:
\begin{itemize}
	\item Model 1: $\eta_l(\alpha_i)=\frac{1}{0.1\sqrt{2\pi}}\exp(-10(\alpha_i-\varpi_l)^2)$, 
	$x_{il}\sim \mathsf{N}(\eta_l(\alpha_i), \,0.5^2)$.
	\item Model 2: $\eta_l(\alpha_i)=
	\exp(-10|\alpha_i-\varpi_l|)$,  
	$x_{il}\sim \mathsf{N}(\eta_{l}(\alpha_i),\, 0.5^2)$.
	\item Model 3: $\eta_l(\alpha_i)=
	1-(1+\exp(15(0.8|\alpha_i-\varpi_l|)^{0.8}-0.1))^{-1}$,
	$x_{il}\sim\mathsf{Bernoulli}(\eta_{il})$.
\end{itemize}
The latent variables $\alpha_i$ and $\varpi_l$ follow the uniform distributions on $[0, 1]$.

We consider $2\,000$ simulations with $n=p=1\,000$.
To implement the proposed method, we use the first half of rows of $\bX$ for $K$-NN matching and the second half for principal component analysis. We take the pseudo-max distance and set the number of nearest neighbors $K=\mathsf{c}\times n^{2/3}$ for $\mathsf{c}=0.5, 1$ and $1.5$. 
To avoid extracting too weak local factors,  we let the number of local principal components 
$\ttd_i=2$ if $\upsilon_{2, \subi}/\upsilon_{3,\subi}\geq \log\log K$ and $\ttd_i=1$ otherwise, for each $i\in[n]$. 
We also compare the proposed method (LPCA) with a natural alternative---global principal component analysis (GPCA).
Specifically, we use the entire matrix $\bX$ to extract principal components and form the mean prediction for each entry. 
The number of factors is determined by using the eigenvalue ratio test \citep{Ahn_2013_ECMA} based on doubly demeaned data. 
We report the following results for each method: (1) maximum absolute error (MAE), i.e., 
$\max_{i\in[n], l\in\mathcal{C}^\ddagger}|\widehat{\eta}_{il}-\eta_{il}|$;
and (2) the prediction error for  three missing entries in the last row of $\bX$. For (2), in each simulated dataset we take the three units with the values of $\alpha_i$ equal to the $0.1$-, $0.5$-, and $0.9$-quantiles of the sample respectively, and then 
replace their corresponding values in the last row of $\bX$ with zeros (``missing values").
This operation mimics the data pattern in some policy evaluation settings: a unit whose individual feature is at the low, medium or high level relative to the whole distribution gets treated in the last period and thus has a missing value in the last row of $\bX$. 
Finally, all these measurements of performance are averaged across $2\,000$ repetitions.

\FloatBarrier
\begin{table}[h]
	\centering
	\scriptsize
	\caption{Simulation Results, $n=1\,000$, $p=1\,000$, $2\,000$ replications}
	\label{table:simul}
	\resizebox{.65\textwidth}{1.2\height}{\footnotesize 
\begin{tabular}{lrrrcr}
\hline\hline
\multicolumn{1}{l}{\bfseries }&\multicolumn{3}{c}{\bfseries LPCA}&\multicolumn{1}{c}{\bfseries }&\multicolumn{1}{c}{\bfseries GPCA}\tabularnewline
\cline{2-4} \cline{6-6}
\multicolumn{1}{l}{}&\multicolumn{1}{c}{$K$=49}&\multicolumn{1}{c}{$K$=99}&\multicolumn{1}{c}{$K$=149}&\multicolumn{1}{c}{}&\multicolumn{1}{c}{}\tabularnewline
\hline
{\bfseries Model 1}&&&&&\tabularnewline
~~MAE&$0.680$&$0.937$&$2.203$&&$1.148$\tabularnewline
~~$q_\alpha$=.1&$0.075$&$0.068$&$0.091$&&$0.111$\tabularnewline
~~$q_\alpha$=.5&$0.077$&$0.078$&$0.119$&&$0.095$\tabularnewline
~~$q_\alpha$=.9&$0.076$&$0.068$&$0.088$&&$0.112$\tabularnewline
\hline
{\bfseries Model 2}&&&&&\tabularnewline
~~MAE&$0.599$&$0.636$&$0.706$&&$0.870$\tabularnewline
~~$q_\alpha$=.1&$0.067$&$0.055$&$0.051$&&$0.111$\tabularnewline
~~$q_\alpha$=.5&$0.066$&$0.054$&$0.052$&&$0.097$\tabularnewline
~~$q_\alpha$=.9&$0.071$&$0.054$&$0.051$&&$0.112$\tabularnewline
\hline
{\bfseries Model 3}&&&&&\tabularnewline
~~MAE&$0.475$&$0.461$&$0.461$&&$0.470$\tabularnewline
~~$q_\alpha$=.1&$0.035$&$0.032$&$0.039$&&$0.047$\tabularnewline
~~$q_\alpha$=.5&$0.038$&$0.038$&$0.043$&&$0.047$\tabularnewline
~~$q_\alpha$=.9&$0.034$&$0.032$&$0.039$&&$0.046$\tabularnewline
\hline
\end{tabular}
}
	\flushleft\footnotesize{\textbf{Notes}:
		MAE = maximum absolute error; $q_\alpha$:  quantile of $(\alpha_i: 1\leq i\leq n)$.
	}\newline
\end{table}
\FloatBarrier

The proposed method performs well  across the three settings. Compared with GPCA, LPCA usually has smaller or at least comparable maximum absolute errors and prediction errors for the three missing entries. 
It can be seen that such improvement is more pronounced when the nonlinearity of $\eta_l$ is more severe (e.g., Model 1) and a sufficiently small $K$ is used.
The results are relatively stable across different choices of $K$ except that in Model 1---a highly nonlinear one---using a very large $K$ leads to much increase in MAE. In the SA we report additional simulation evidence for datasets with smaller $p$ and for LPCA with a larger fraction of rows for $K$-NN matching (and thus a smaller fraction for obtaining the principal components). The results are largely close to the main findings above.

\section{Application: Synthetic Controls}\label{sec: application}

The proposed method is readily applicable to certain matrix completion problems. For instance, in the classical synthetic control design \citep{Abadie_2020_JEL}, the researcher is interested in the effect of a policy that only affects one single unit, and all other units in the data remain untreated throughout the observation period. Formally, we can define the potential outcome under the treatment $y_{il}(1)$ and that in the absence of the treatment $y_{il}(0)$  for each unit $i\in[n]$ and time $l\in[p]$. The observed outcome $y_{il}=d_{il}y_{il}(1)+(1-d_{il})y_{il}(0)$ where 
$d_{il}=\I(i=1,l>p_0)$ for some $p_0<p$. That is, a policy intervention was implemented at time $p_0+1$ and affected the first unit only. The key task in this problem is to predict the missing counterfactual outcome $y_{1l}(0)$ for the treated unit in each post-treatment period $l> p_0$. Usually, the matrix of the potential outcomes $\bY^N$, whose typical entry is given by $y_{il}(0)$, is assumed to admit a linear factor structure, which justifies the key idea of synthetic controls that uses a linear combination of untreated units to predict the counterfactual of the treated. 

The proposed local PCA method relaxes the linearity requirement by allowing for a possibly nonlinear factor structure.  If the previous procedure could be applied to the matrix $\bY^N$, we would immediately have an estimate of the mean of the counterfactual outcome of the treated in the post-treatment period, and Corollary \ref{coro: consistency of latent mean} would apply. However, several entries at the ``bottom-left'' corners of $\bY^N$ are unobserved. 
Let 
$$x_{il}=y_{il}\I(i\neq 1 \text{ or } l\leq p_0).$$
Thus, the matrix $\bX$ is identical to $\bY^N$  except that the missing outcomes of the treated in $\bY^N$ are replaced with zeros. Then we apply the proposed procedure to the observed matrix $\bX$. 
Assume the number of post-treatment periods $p-p_0$ is fixed, which is common in the classical synthetic control analysis. Thus $\bX$ can be viewed as a slightly perturbed version of $\bY^N$. 
The following theorem shows that the proposed mean estimator $\widehat{\bF}_{\subi}\widehat{\bLambda}_{\subi}'$  is robust to such ``small'' perturbation.

\begin{thm}\label{thm: robustness}
	Suppose that Assumptions \ref{Assumption: Regularities}, \ref{Assumption: Matching} and \ref{Assumption: LPCA} hold for the counterfactual outcome matrix $\bY^N$. Let $\widehat{\bF}_{\subi}$ and $\widehat{\bLambda}_{\subi}$ be outputs obtained by applying Algorithm \hyperref[algorithm]{1} to $\bX$ defined above. Assume $p-p_0$ is fixed.  and $(np)^{\frac{2}{\nu}}\delta_{n}^{-2}\lesssim 1$. Then,
	\[
	\max_{1\leq i\leq n}
	\|\widehat{\bF}_{\subi}\widehat{\bLambda}_{\subi}'-\bH_{\subi}\|_{\max}\lesssim_\P
	\delta_n^{-1}+h_n^m.
	\]
\end{thm}
This result relies on the robustness of leading singular vectors with respect to small perturbation of $\bY^N$. Similar results still hold for missing data patterns other than synthetic controls. The key requirement is that there are not many missing entries in each row and in each column. See the proof of Theorem \ref{thm: robustness} in Appendix \ref{sec: appendix proofs} for more details. When missingness leads to relatively large perturbation of $\bY^N$, local PCA is not robust and thus may not be directly applied. \cite{Feng_2023_wp} discusses the scenario where there are many missing entries in one or a few columns (or rows) and show that the extracted local factors and loadings obtained from the complete part of the matrix can be used for prediction of missing entries.

\subsection{An Empirical Example}\label{subsec: example}

To illustrate the proposed method, we reanalyze the effect of 2012 Kansas tax cuts on economic growth (see \citealp{Rickman-Wang_2018_RSUE} for more details). 
The second quarter of 2012 (2012Q2)---when the governor Brownback signed the tax cut bill into law---is used as the starting time of the policy. As described above, this can be viewed as a synthetic control problem where the goal is to predict the counterfactual economic performance of Kansas after 2012Q2 had the tax cut policy not been implemented. 

We have a data matrix on quarterly GDP per capita of 50 states ($n=50$) from 1990Q1 to 2016Q1 ($p=105$).
We take the first difference of the original data and analyze the effect of tax cuts on GDP per capita growth rates. 
The growth rates of Kansas after 2012Q2 are replaced with  missing values. 
The same strategy as described in Section \ref{sec: simulations} is used to implement local PCA: we take the pseudo-max distance, set $K=n^{2/3}\approx 14$ and choose the number of local factors based on the strengths of leading singular values. The first $40$ periods are used for $K$-NN matching, and the remaining periods are used for PCA. For comparison, we also implement the classical synthetic control (SC) method that relies on a simplex constraint on weights.

The result is shown in Figure \ref{figure:kansas, growth} below.
It turns out that in 9 out of 16 post-treatment periods the observed GDP growth rate sequence is below the prediction from LPCA. Note that LPCA estimates the mean value of GDP growth, and thus the diffrerence between the two growth paths may arise from both estimation error and an idiosyncratic noise in each period. To reduce the impact of the idiosyncratic noise, we also compute  the average growth rate over the entire post-treatment periods for the counterfactual Kansas, which  is $0.53$ percentage points higher than that of the observed Kansas. 
By constrast, SC yields a predicted average post-treatment growth rate that is $0.19$ percentage points lower  than the observed Kansas, which is not plausible given the poor fit of SC in the pre-treatment period.

To see the impact of the tax cut policy on the level of GDP per capita, we take the GDP per capita of Kansas in 2012Q1 as the initial value and translate the predicted growth rates into a counterfactual GDP per capita trajectory for Kansas. 
In this case, we compare LPCA with SC based on the data of GDP per capita levels rather than that using the growth rates data, which is also common in the literature.
The results are given in Figure \ref{figure:kansas, level}. In this case both methods show that overall the tax cut policy has a pronounced negative shock on the GDP growth trajectory of Kansas, but the effect given by SC is relatively small in magnitude, especially in later post-treatment periods.

\medskip
\FloatBarrier
\begin{figure}[!h]
	\small
	\begin{center}\caption{Effect of 2012 Kansas Tax Cut on Economic Growth}
		\begin{subfigure}{0.48\textwidth}
			\includegraphics[width=\textwidth]{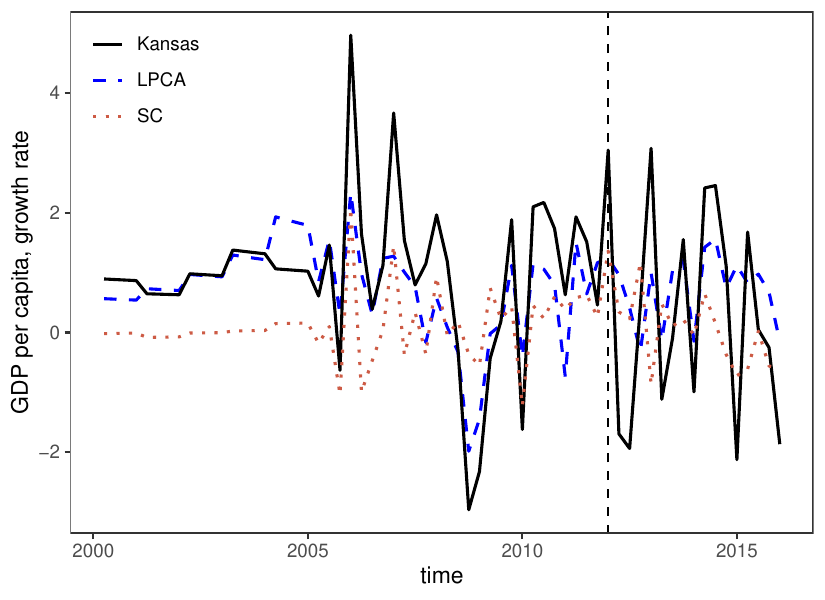}
			\caption{GDP per capita growth rate}
			\label{figure:kansas, growth}
		\end{subfigure}
		\begin{subfigure}{0.48\textwidth}
			\includegraphics[width=\textwidth]{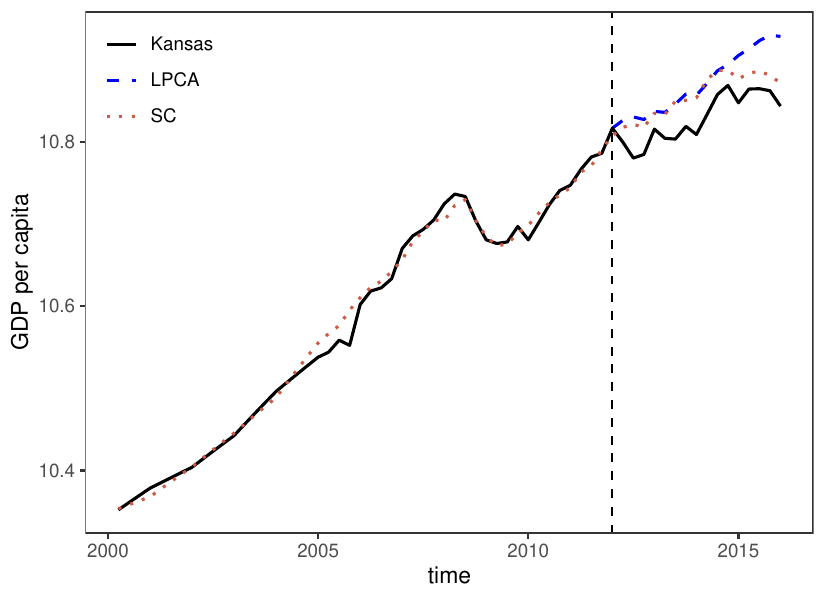}
			\caption{GDP per capita}
			\label{figure:kansas, level}
		\end{subfigure}
	\end{center}
	\flushleft\footnotesize{\textbf{Notes}:
	LPCA: local principal component analysis; SC: synthetic control. The vertical line stands for the start of the policy.
}\newline
\end{figure}
\FloatBarrier


\section{Conclusion} \label{sec: conclusion}
This paper studied optimal estimation of large-dimensional nonlinear factor models. The observed variables were modeled as possibly nonlinear functions of some latent variables with the functional forms unspecified. A local principal component analysis method that combines $K$ nearest neighbors matching and principal component analysis was proposed to estimate the nonlinear factor structure and recover the information on latent variables and latent functions. The large-sample properties of the proposed estimators were established, including a sharp bound on the matching discrepancy of nearest neighbors under generic distance functions, the sup-norm error bounds for estimated local factors and loadings, and the uniform convergence rate for the estimator of the factor structure. The method can also be applied to some matrix completion problems such as synthetic controls, which does not require the mean matrix to be exactly low-rank.

\begin{appendices}\label{sec: appendix}

\section{Verification of High-Level Conditions}\label{sec: appendix matching}

We imposed two high-level conditions, Assumptions \ref{Assumption: Matching} and \ref{Assumption: LPCA}, for $K$-NN matching and local PCA.
In this section we discuss how they can be verified for specific cases.

\subsection*{Assumption \ref{Assumption: Matching}}
We first verify Assumption \ref{Assumption: Matching} for each choice of distance given in Examples \ref{exmp: euclidean}--\ref{exmp: dist of average}. 

The following theorem gives a set of sufficient conditions for the squared Euclidean distance.
\begin{thm}[Euclidean distance] \label{thm: IMD hom}
	Let $\rho(\cdot, \cdot)$ be the squared Euclidean distance in Example \ref{exmp: euclidean}. Suppose that Assumption \ref{Assumption: Regularities} and the following conditions hold:
	\begin{enumerate}[label=(\roman*)]
		\item 	$\frac{1}{p^\dagger}\sum_{l\in\mathcal{R}^\dagger}\E[u_{il}^2|\mathscr{F}]=\sigma^2$ for all $i\in [n]$;
		\item  $\underset{i\in [n], l\in\mathcal{R}^\dagger}{\max}\;\E[|u_{il}|^{4+2\nu}|\mathscr{F}]<\infty$  a.s. on $\mathscr{F}$;
		\item 	For every $\varepsilon>0$,
		$\underset{\Delta\rightarrow 0}{\lim}\;\underset{n,p^\dagger\rightarrow\infty}{\limsup}\;
		\P\Big\{\underset{i\in [n]}{\max}\;
		\underset{j:\rho(\bH^\dagger_{\cdot i},\bH^\dagger_{\cdot j})<\Delta}{\max}\;
		\|\balpha_i-\balpha_j\|>\varepsilon\Big\}=0$;
		\item 	For some $\underline{c}>0$,
		$\underset{n,p^\dagger\rightarrow\infty}{\lim}\;\P\Big\{
		\underset{i\in [n]}{\min}\;s_{\min}\Big(\frac{1}{p^\dagger}
		\nabla\bmeta^\dagger(\balpha_i)'\nabla\bmeta^\dagger(\balpha_i)\Big)\geq\underline{c}\Big\}=1$.
	\end{enumerate}
	If $\frac{n^{\frac{4}{\nu}}(\log n)^{\frac{\nu-2}{\nu}}}{p}\lesssim 1$, then
	Assumption \ref{Assumption: Matching} holds with
	\[
	a_n=((\log n)/p)^{1/4}, \quad \rho_0=2\sigma^2, \quad \lvarsig=\uvarsig=2.
	\]
\end{thm}
\begin{remark}[Plausibility of (iii)] \label{remark: nonsingularity}
	As discussed before, (iii) is a requirement for informativeness of measurements. To get a sense of its plausibility, consider a linear factor model with an intercept: $\eta_l(\alpha_i)=c_0+\varpi_l\alpha_i$ for $c_0$, $\varpi_l\in\mathbb{R}$. When $\varpi_l\neq 0$ if and only if $l=1$ (the factor is too ``sparse''), most measurements in $\bx_i$ are uninformative about $\alpha_i$. For $\alpha_i\neq\alpha_j$, their difference is revealed only when $l=1$. The distance between $\bH^{\dagger}_{\cdot i}$ and $\bH^{\dagger}_{\cdot j}$ becomes negligible as $p$ diverges, which violates condition (iii). However, as long as a non-negligible \textit{subset} of $\{\varpi_l: l\in\mathcal{R}^\dagger\}$ are nonzero, the corresponding measurements suffice to differentiate the two units. In other words, the proposed method only requires some but not all measurements to be informative, and importantly, it is unnecessary to know their identities a priori.
\end{remark}

Next, we consider the pseudo-max distance in Example \ref{exmp: pseudo-max}. We define $\mathscr{P}_{\balpha}[\cdot]$ as the projection operator onto the $r$-dimensional space (embedded in $\mathbb{R}^{p^\dagger}$) spanned by the local tangent basis $\nabla\bmeta$ at $\bmeta(\balpha)$. Take an orthogonalized basis of this tangent space, and denote by $\mathscr{P}_{\balpha,j}[\cdot]$ the projection operator onto the $j$th direction of the tangent space. 

\begin{thm}[Pseudo-max distance] \label{thm: IMD hsk}
	Let $\rho(\cdot, \cdot)$ be the pseudo-max distance in Example \ref{exmp: pseudo-max}. 
	Suppose that Assumption \ref{Assumption: Regularities} and the following conditions hold:
	\begin{enumerate}[label=(\roman*)]
		\item 	For every $\varepsilon>0$,
		$\underset{\Delta\rightarrow 0}{\lim}\;
		\underset{n,p^\dagger\rightarrow\infty}{\limsup}\;
		\P\Big\{\underset{i\in [n]}{\max}\;
		\underset{j:\rho(\bH^\dagger_{\cdot i},\bH^\dagger_{\cdot j})<\Delta}{\max}\;
		\|\balpha_i-\balpha_j\|>\varepsilon\Big\}=0$;
		\item 	For some $\underline{c}>0$,
		$\underset{n,p^\dagger\rightarrow\infty}{\lim}\;\P\Big\{
		\underset{i\in [n]}{\min}\;s_{\min}\Big(\frac{1}{p^\dagger}
		\nabla\bmeta^\dagger(\balpha_i)'\nabla\bmeta^\dagger(\balpha_i)\Big)\geq\underline{c}\Big\}=1$;
		\item 	For some $\underline{c}'>0$, 
		$\underset{n,p^\dagger\rightarrow\infty}  {\lim}\;
		\P\Big\{
		\underset{\ell\in [r]}{\min}\;
		\underset{i\in [n]}{\min}\;
		\sup_{\balpha\in\mathcal{A}}
		\frac{1}{p^\dagger}\|\mathscr{P}_{\balpha_i,\ell}[\bH^\dagger(\balpha)]\|^2\geq \underline{c}'\Big \}=1$.
	\end{enumerate} 
	If $\frac{n^{\frac{4}{\nu}}(\log n)^{\frac{\nu-2}{\nu}}}{p}\lesssim 1$, then
	Assumption \ref{Assumption: Matching} holds with
	\[
	a_n=(\log n/p)^{1/2}, \quad \rho_0=0,\quad \lvarsig=\uvarsig=1.
	\]
\end{thm}

\begin{remark}[Plausibility of (iii)]
	Condition (iii) is what we termed ``non-collapsing'' in Example \ref{exmp: pseudo-max}. To get a sense of its plausibility, 
	consider the linear factor model:  $\eta_l(\alpha_i)=c_0+\varpi_l\alpha_i$. (iii) is satisfied if $\frac{1}{p^\dagger}\sum_{l\in\mathcal{R}^\dagger}\varpi_l^2\asymp 1$ and the support $\mathcal{A}$ contains at least one $\alpha$ such that $|\alpha+(\frac{1}{p^\dagger}\sum_{l\in\mathcal{R}^\dagger}\varpi_l^2)^{-1}(\frac{1}{p^\dagger}\sum_{l\in\mathcal{R}^\dagger}\varpi_lc_0)|\geq C$ for some constant $C>0$. When $c_0=0$, the second restriction further reduces to the mild requirement that 
	there exists one $\alpha$ whose absolute value is strictly positive. Intuitively, (iii) holds if the factor $\varpi_l$ is not degenerate or explosive and the dataset has some variation in the latent variable $\alpha_i$. Also, (iii) is only used to obtain a sharper bound on the matching discrepancy; when it is violated, Assumption \ref{Assumption: Matching} still holds with $a_n=(\log n/p)^{1/2}$, $\uvarsig=1$, and $\lvarsig=2$. 
\end{remark}

\begin{thm}[Distance of average] \label{thm: dist of average}
	Let $\rho(\cdot, \cdot)$ be the distance of the average given in Example \ref{exmp: dist of average}. 
	Suppose that Assumption \ref{Assumption: Regularities} and the following conditions hold:
	\begin{enumerate}[label=(\roman*)]
		\item $\underset{\balpha\in\mathcal{A}}{\sup}\;\Big|\frac{1}{p^\dagger}\sum_{l\in\mathcal{R}^\dagger}\eta_l(\balpha)-\bar{\eta}(\balpha)\Big|=o_\P(1)$ for some function $\bar{\eta}(\cdot)$;
		\item $\bar{\eta}(\cdot)$ is one-to-one and has the first derivative  bounded away from zero on $\mathcal{A}$.
	\end{enumerate}
	If $\frac{n^{\frac{2}{\nu}}(\log n)^{\frac{\nu-2}{\nu}}}{p}\lesssim 1$, then
	Assumption \ref{Assumption: Matching} holds with
	\[
	a_n=(\log n/p)^{1/2}, \quad \rho_0=0,\quad \lvarsig=\uvarsig=1.
	\]
\end{thm}


\subsection*{Assumption \ref{Assumption: LPCA}}
In the following we showcase how to verify Assumption \ref{Assumption: LPCA} by considering the scenario where $\bLambda_{\subi}$ is taken to be the monomial basis as described in Section \ref{subsec: main result, pca} and $\bUpsilon_{\subi}=\diag(1, h_n\bI_{\ell_1}, \cdots, h_n ^{m-1}\bI_{\ell_{m-1}})$.

\begin{thm}[Verification of Assumption \ref{Assumption: LPCA}]\label{thm: verify LPCA}
	Suppose that Assumptions \ref{Assumption: Regularities} and \ref{Assumption: Matching} hold, and  
	for $\mathscr{D}\bmeta_l=(\nabla^{0}\bmeta_l', \cdots, \nabla^{m-1}\bmeta_l')'$ and some constant $c>0$, w.p.a.1, for all $i\in[n]$,
	$s_{\min}(\frac{1}{p^\ddagger}\sum_{l\in\mathcal{R}^\ddagger}
		(\mathscr{D}\bmeta_{l}(\balpha_i))(\mathscr{D}\bmeta_{l}(\balpha_i))')\geq c$.
	If $\delta_{n}^{-1}/h_{n}^{m-1}=o(1)$, $a_n^{1/\underline{\varsigma}}=o(h_n)$, and $\bar{\varsigma}=\underline{\varsigma}$, then 
	Assumption \ref{Assumption: LPCA} is satisfied by setting  $\ttd_i=\binom{m-1+r}{r}$.
\end{thm}

In this theorem for simplicity we require all derivatives of the latent functions be not too collinear, but recall that Assumption \ref{Assumption: LPCA} is general enough to cover other cases with more heterogeneous degrees of nonlinearity across different local neighborhoods.

\section{Selected Proofs for Main Results}\label{sec: appendix proofs}
\subsection{Proofs of Theorem \ref{thm: IMD}}
	For any fixed value $\balpha_0\in\mathcal{A}$, let  $\{j_k^*(\balpha_0):1\leq k\leq K\}$ be the set of indices for the $K$ nearest neighbors of $\balpha_0$ in terms of the Euclidean distance, which are ordered based on that of the original sequence $\{\balpha_i: 1\leq i\leq n\}$. In SA we prove the following lemma.
	\begin{lem} \label{lem: direct matching}
		Suppose that Assumption \ref{Assumption: Regularities}(a) holds. If $\frac{\log (n/K)}{K}=o(1)$
		and $\frac{K\log n}{n}=o(1)$, then for some absolute constants $c,C>0$, 
		\[
		c\Big(\frac{K}{n}\Big)^{\frac{1}{r}}\leq
		\inf_{\balpha_0\in\mathcal{A}}\max_{1\leq k\leq K}\|\balpha_{j_k^*(\balpha_0)}-\balpha_0\|
		\leq \sup_{\balpha_0\in\mathcal{A}}\max_{1\leq k\leq K}\|\balpha_{j_k^*(\balpha_0)}-\balpha_0\|
		\leq C\Big(\frac{K}{n}\Big)^{\frac{1}{r}},\;
		\text{w.p.a.} 1.
		\]
		Moreover, under the same conditions, for some absolute constants $c'>0$,
		\[
		\min_{1\leq i\leq n}\max_{1\leq k\leq K} \|\balpha_i-\balpha_{j_k(i)}\|\geq 
		c'\Big(\frac{K}{n}\Big)^{\frac{1}{r}}, \; \text{w.p.a. } 1.
		\] 
	\end{lem}

    Now we can use this lemma to prove Theorem \ref{thm: IMD}.
    By Assumption \ref{Assumption: Matching}(a),
$$
\max_{1\leq i\leq n}\max_{1\leq k\leq K}\Big|
\rho(\bX_{\cdot i}^\dagger,\bX_{\cdot j_k(i)}^\dagger)-
\rho(\bH_{\cdot i}^\dagger,\bH_{\cdot j_k(i)}^\dagger)-\rho_0\Big|
\lesssim_\P a_n.$$
Then by Assumption \ref{Assumption: Matching} and Lemma \ref{lem: direct matching},
\begin{align*}
\max_{1\leq i\leq n}\max_{1\leq k\leq K}\rho(\bX_{\cdot i}^\dagger,\bX_{\cdot j_k(i)}^\dagger)
&\leq \max_{1\leq i\leq n}\max_{1\leq k\leq K}\rho(\bX_{\cdot i}^\dagger,\bX_{\cdot j_k^*(i)}^\dagger)\\
&=\max_{1\leq i\leq n}\max_{1\leq k\leq K}
\rho(\bH_{\cdot i}^\dagger,\bH_{\cdot j_k^*(i)}^\dagger)+\rho_0+O_\P(a_n)\\
&\lesssim_\P (K/n)^{\uvarsig/r}+\rho_0+a_n.
\end{align*}
Combining the two yields that  we can choose large enough constants $C', C''>0$ so that with arbitrarily large probability, for any $i$ and $k$,
\[
\|\balpha_i-\balpha_{j_k(i)}\|^{\lvarsig}
\leq C'\rho(\bH_{\cdot i}^\dagger, \bH_{\cdot j_k(i)}^\dagger)
\leq C''(K/n)^{\uvarsig/r}+C''a_n,
\]
This suffices to show
\[
\|\balpha_i-\balpha_{j_k(i)}\|\lesssim_\P (K/n)^{\uvarsig/(\lvarsig r)} + a_n^{1/\lvarsig}.
\]

	
\subsection{Proofs for Section \ref{subsec: main result, pca}} 
\subsubsection{Useful Facts}
We first give some useful facts and introduce more notation.
For each neighborhood $\mathcal{N}_i$, we extract leading $\ttd_i$ eigenvectors:
\[
\frac{1}{p^\ddagger K}\bX_{\subi}\bX_{\subi}'\widehat{\bF}_{\subi}=\widehat{\bF}_{\subi}\widehat{\bOmega}_{\subi}
\]
where $\widehat{\bF}_{\subi}$ satisfies  $\frac{1}{p^\ddagger}\widehat{\bF}_{\subi}'\widehat{\bF}_{\subi}=\bI_{\ttd_i}$ and $\widehat\bOmega_{\subi}=\diag\{\widehat{\omega}_{1,\subi}, \cdots, \widehat{\omega}_{\ttd_i,\subi}\}$ is a diagonal matrix with $\ttd_i$ leading eigenvalues $\widehat{\omega}_{1,\subi}\geq \cdots\geq\widehat{\omega}_{\ttd_i,\subi}$ on the diagonal. 
Accordingly, for $j\in [\ttd_i]$, let $\omega_{j,\subi}$ denote the $j$th eigenvalue of $\frac{1}{p^\ddagger K}\bF_{\subi}\bLambda_{\subi}'\bLambda_{\subi}\bF_{\subi}'$.
The estimated factor loading is given by
$\widehat{\bLambda}_{\subi}=\frac{1}{p^\ddagger}\bX_{\subi}'\widehat{\bF}_{\subi}$.

Partition $\ttd_i$ leading approximation terms into $g_i$ groups, which is identical to a partition of index set: 
$[ \ttd_i]=\cup_{\ell=0}^{g_i-1}\mathcal{C}_{i\ell}$ with $|\mathcal{C}_{i\ell}|=\ttd_{i\ell}$, where for any $j,k\in\mathcal{C}_{i\ell}$, $\upsilon_{j,\subi}\asymp\upsilon_{k,\subi}$, and for any $j\in\mathcal{C}_{i\ell}$ and $k\in\mathcal{C}_{i(\ell+1)}$, $\upsilon_{k,\subi}/\upsilon_{j,\subi}=o(1)$. 
Then, partition the eigenvalues and eigenvectors accordingly. 

For any generic $p^\ddagger\times \ttd_i$ matrix $\bG_{\subi}$ (defined locally for unit $i$) and a set $\mathcal{C}$ of indices, $\bG_{\cdot\mathcal{C},\subi}$ denotes the submatrix of $\bG_{\subi}$ with the column indices in $\mathcal{C}$. For example, $\bF_{\cdot\mathcal{C}_{i\ell},\subi}$ denotes the first $\ttd_{i0}$ columns of $\bF_{\subi}$. 
Moreover, for a generic matrix $\bG$, $\bP_{\bG}$ and $\bM_{\bG}$ denote the projection matrices onto the column space of $\bG$ and its orthogonal complement respectively.

\subsubsection{Useful Lemmas}
We give three useful lemmas below, and their proofs are available in the SA.
\begin{lem} \label{lem: conditional independence of KNN}
	Under Assumptions \ref{Assumption: Regularities} and \ref{Assumption: Matching}, 
	$(\balpha_{j_k(i)}: 1\leq k\leq K)$ is independent conditional on $\widehat{R}_i=\max_{i\in[n]}\rho(\bX_{\cdot i}^\dagger, \bX_{\cdot j_k(i)}^\dagger)$ and $\bX^{\dagger}_{\cdot i}$.
\end{lem}

\begin{lem}[Operator Norm of Errors] \label{lem: operator norm of eps}
	Under Assumptions \ref{Assumption: Regularities}, if $\frac{n^{\frac{2}{\nu}}\log n}{p}=o(1)$, then
	\[
	\max_{1\leq i\leq n}\|\bU_{\subi}\|\lesssim_\P\sqrt{K}+\sqrt{p\log (n\vee p)}.
	\]
\end{lem}

\begin{lem}\label{lem: consistency eigenstructure}
	Under Assumptions \ref{Assumption: Regularities}, \ref{Assumption: Matching} and \ref{Assumption: LPCA}, if  $\frac{n^{\frac{2}{\nu}}\log n}{p}=o(1)$ and 
	$\frac{(np)^{\frac{2}{\nu}}(\log (n\vee p))^{\frac{\nu-2}{\nu}}}{K}\\
	\lesssim 1$, then for each $j\in[\mathsf{d}_i]$,
	\begin{enumerate}[label=(\roman*)]
		\item 	$\underset{1\leq i\leq n}{\max}|\widehat{\omega}_{j,\subi}/\omega_{j,\subi}-1|=o_\P(1)$;
		\item 	There exists some $\check{\bR}_{\subi}$ such that $\underset{1\leq i\leq n}{\max}\frac{1}{\sqrt{p}}\|\widehat{\bF}_{\cdot j,\subi}-\bF_{\subi}\check{\bR}_{\cdot j,\subi}\|\lesssim_\P
		\delta_n^{-1}\upsilon_{j,\subi}^{-1}+h_n^{2m}\upsilon_{j,\subi}^{-2}$. 
	\end{enumerate} 

\end{lem}

Since the estimated factor loadings are simply eigenvectors of $\frac{1}{pK}\bX_{\subi}'\bX_{\subi}$, the analysis of $\widehat{\bF}_{\subi}$ may also be applied to  $\widehat{\bLambda}_{\subi}$ with a proper rescaling.

The above result shows the convergence of the estimated factors in the mean squared error sense. It is neither pointwise nor uniform. 
In  Lemma \ref{lem: eigenvector bound} below we establish a sup-norm bound on the estimated singular vectors, which is the key building block of the uniform convergence results in Theorem \ref{thm: uniform convergence of factors}.

\begin{lem}\label{lem: eigenvector bound}
	Under Assumptions \ref{Assumption: Regularities}, \ref{Assumption: Matching} and \ref{Assumption: LPCA}, if  
	$(np)^{\frac{2}{\nu}}\delta_n^{-2}\lesssim 1$, 
	then 
	\[
	\max_{1\leq i\leq n}
	\|\widehat{\bF}_{\subi}\|_{\max}\lesssim_\P 1, \quad
	\max_{1\leq i\leq n}\|\widehat{\bLambda}_{\cdot j,\subi}\|_{\max}\lesssim_\P \upsilon_{j, \subi}, \quad \text{for }\; j\in[\ttd_i].
	\]
\end{lem}

\begin{proof}
	Throughout the proof, for a matrix $\bA$, $\|\bA\|_{2\rightarrow\infty}=\max_{i}\|\bA_{i\cdot}\|$. 
	The proof strategy is similar to \cite{Abbe-et-al_2020_AoS}.  Specifically, 
	define 
	\[
	\bG=\left[\begin{array}{cc}
		\bm{0}&\bX_{\subi}\\
		\bX_{\subi}'&\bm{0}	
	\end{array}
	\right]
	\quad \text{and}\quad
	\bG^*=\left[\begin{array}{cc}
		\bm{0}&\bF_{\subi}\bLambda_{\subi}'\\
		\bLambda_{\subi}\bF_{\subi}'&\bm{0}
	\end{array}\right].
	\]
	The dependence of $\bG$ and $\bG^*$ on $\subi$ is suppressed for simplicity. 
	By construction, the eigenvalue decomposition of $\bG$ will be
	
	\begin{scriptsize}
		\[
		\bG=\frac{\sqrt{Kp}}{\sqrt{2}}\left[\begin{array}{cc}
			\frac{1}{\sqrt{p}}\widehat{\bF}_{\subi} & \frac{1}{\sqrt{p}}\widehat{\bF}_{\subi}\\
			\frac{1}{\sqrt{K}}\widehat{\bLambda}_{\subi}\widehat{\bOmega}_{\subi}^{-1/2} & 
			-\frac{1}{\sqrt{K}}\widehat{\bLambda}_{\subi}\widehat{\bOmega}_{\subi}^{-1/2}
		\end{array}\right]
		\times
		\left[\begin{array}{cc}
			\widehat{\bOmega}_{\subi}^{1/2}&\bm{0}\\
			\bm{0}&-\widehat{\bOmega}_{\subi}^{1/2}
		\end{array}\right]
		\times
		\frac{1}{\sqrt{2}}\left[\begin{array}{cc}
			\frac{1}{\sqrt{p}}\widehat{\bF}_{\subi} & \frac{1}{\sqrt{p}}\widehat{\bF}_{\subi}\\
			\frac{1}{\sqrt{K}}\widehat{\bLambda}_{\subi}\widehat{\bOmega}_{\subi}^{-1/2} & -\frac{1}{\sqrt{K}}\widehat{\bLambda}_{\subi}\widehat{\bOmega}_{\subi}^{-1/2}
		\end{array}\right]'.
		\]
	\end{scriptsize}
	\vspace{-1.5em}
	
	Now, for any $0\leq \ell\leq g_i$, let 
	\[
	\bar{\bE}=
	\left[
	\begin{array}{c}
		\bar{\bE}_{\mathtt{L}}\\
		\bar{\bE}_{\mathtt{R}}
	\end{array}\right]
	=\frac{1}{\sqrt{2}}\left[\begin{array}{c}
		\frac{1}{\sqrt{p}}\widehat{\bF}_{\gr{\ell},\subi}\\
		\frac{1}{\sqrt{K}}\widehat{\bLambda}_{\gr{\ell},\subi}\widehat{\bOmega}_{\mathcal{C}_\ell\mathcal{C}_\ell,\subi}^{-1/2}
	\end{array}\right]
	\quad\text{and}\quad
	\bar{\bSigma}=\sqrt{pK}\widehat{\bOmega}_{\mathcal{C}_\ell\mathcal{C}_\ell,\subi}^{1/2}.
	\]
	They are the eigenvectors and eigenvalues of $\bG$ corresponding to the $\ell$th-order approximation.
	Define $\bar{\bE}^*$ and $\bar{\bSigma}^*$ for $\bG^*$ the same way. 
	Let $\bar{\ttd}_{i\ell}=\sum_{k=0}^\ell\ttd_{ik}$, 
	and define an eigengap for the ``$\ell$th eigenspace'':
	$\Delta^*:=\sqrt{pK}((\omega_{\bar\ttd_{i(\ell-1)},\subi}-\omega_{\bar\ttd_{i(\ell-1)}+1,\subi})\wedge
	(\omega_{\bar\ttd_{i\ell},\subi}-\omega_{\bar\ttd_{i\ell}+1,\subi})\wedge\min_{s\in\mathcal{C}_\ell}|\omega_{s,\subi}|)$. 
	Similarly, $\Delta:=\sqrt{pK}((\widehat{\omega}_{\bar\ttd_{i(\ell-1)},\subi}-\widehat{\omega}_{\bar\ttd_{i(\ell-1)}+1,\subi})\wedge
	(\widehat{\omega}_{\bar\ttd_{i\ell},\subi}-\widehat{\omega}_{\bar\ttd_{i\ell}+1,\subi})\wedge\min_{s\in\mathcal{C}_\ell}|\widehat{\omega}_{s,\subi}|)$.
	Note that for ease of notation the dependence of these quantities on $\ell$ is suppressed.
	
	First consider any $t\in [p^\ddagger]$.
	By Lemma 1 of \cite{Abbe-et-al_2020_AoS},
	\begin{equation}\label{SA-eq: lemma eigen bound}
		\|(\bar{\bE}\bar{\bR})_{t\cdot}\|\lesssim_\P \frac{1}{\sqrt{pK}\upsilon_{\bar\ttd_{i\ell}, \subi}}
		\Big(\|\bG_{t\cdot}\bar{\bE}^*\|+\|\bG_{t\cdot}(\bar{\bE}\bar{\bR}-\bar{\bE}^*)\|\Big),
	\end{equation}
	since $\underset{1\leq i\leq n}{\max}\|\bXi_{\subi}+\bU_{\subi}\|/\Delta^*=o_\P(1)$ and the eigengap $\Delta^*$ is $O_\P((pK)^{1/2}\upsilon_{\bar\ttd_{i\ell}, \subi})$ uniformly over $i$ by Assumption \ref{Assumption: LPCA} and Lemma \ref{lem: operator norm of eps}.
	For the first term on the right,
	\[
	\bG_{t\cdot}\bar{\bE}^*=\bar{\bE}_{t\cdot,\mathtt{L}}^*\bar{\bSigma}^*+
	(\bXi_{t\cdot,\subi}+\bU_{t\cdot,\subi})\bar{\bE}_{\mathtt{R}}^*
	\lesssim_\P
	\|\bar{\bSigma}^*\|\|\bar{\bE}^*_{t\cdot, \mathtt{L}}\|
	+h_n^m\sqrt{K}+
	\|\bU_{t\cdot,\subi}\bar{\bE}_{\mathtt{R}}^*\|.
	\]
	Since $\bar{\bE}_{\mathtt{R}}^*$ is uncorrelated with $\bU_{t\cdot,\subi}$, the last term is a zero-mean sequence. Apply the truncation argument again. Specifically, let $\mathscr{M}_n$ be the $\sigma$-field generated by $\bar{\bE}_{\mathtt{R}}^*$, and 
	define $\zeta_{tk,\subi}^-=u_{tk,\subi}\bar{E}^*_{kj,\mathtt{R}}\I(|u_{tk,\subi}|\leq\tau_n)-\E[u_{tk,\subi}\bar{E}^*_{kj,\mathtt{R}}\I(|u_{tk,\subi}|\leq\tau_n)|\mathscr{M}_n]$ and 
	$\zeta_{tk,\subi}^+=u_{tk,\subi}\bar{E}^*_{kj,\mathtt{R}}-\zeta_{tk,\subi}^-$ for $\tau_n\asymp \sqrt{K/\log (n\vee p)}$ and any $j\in\mathcal{C}_{i\ell}$. By Bernstein inequality, for any $\delta>0$,
	\[
	\P\Big(\Big|\sum_{k=1}^{K}\zeta_{tk,\subi}^-\Big|\geq\delta\Big|\mathscr{M}_n\Big)\leq 2\exp\Big(-\frac{\delta^2/2}{C\|\bar{\bE}^*_{\mathtt{R}}\|^2+\|\bar{\bE}^*_{\mathtt{R}}\|_{2\rightarrow\infty}\tau_n\delta/3}\Big),
	\]
	for some constant $C>0$.
	On the other hand,
	\[
	\P\Big(\max_{1\leq i\leq n}\max_{t\in\mathcal{R}^\ddagger}\Big|\sum_{k=1}^{K}\zeta_{tk,\subi}^+\Big|\geq\delta\Big|\mathscr{M}_n\Big)\leq np\delta^{-2}\tau_n^{-\nu}\|\bar{\bE}_{\mathtt{R}}^*\|^2.
	\]
	The above suffices to show $\|\bU_{t\cdot,\subi}\bar{\bE}_{\mathtt{R}}^*\|\lesssim_\P
	\sqrt{\log (p\vee n)}(1\vee\sqrt{K}\|\bar{\bE}_{\mathtt{R}}^*\|_{2\rightarrow\infty})$ uniformly over $t$ and $i$. 
	
	Now, we analyze the second term in \eqref{SA-eq: lemma eigen bound} by using a ``leave-one-out'' trick.
	Specifically,
	for each $1\leq t\leq p^\ddagger$, let $\bar{\bE}^{(t)}$ be the eigenvector of $\bG^{(t)}$ 
	where a generic $(s,k)$th entry of $\bG^{(t)}$ is given by
	$G_{sk}^{(t)}=G_{sk}\I(s,k\neq t)$. 
	Due to the possibility of equal eigenvalues, introduce  rotation matrices:
	$\bar{\bR}=\bar{\bE}'\bar{\bE}^*$ and $\bar{\bR}^{(t)}=(\bar{\bE}^{(t)})'\bar{\bE}^*$.
	Then, write
	\[
	\bG_{t\cdot}(\bar{\bE}\bar{\bR}-\bar{\bE}^*)=
	\bG_{t\cdot}(\bar{\bE}\bar{\bR}-\bar{\bE}^{(t)}\bar{\bH}^{(t)})+
	\bG_{t\cdot}(\bar{\bE}^{(t)}\bar{\bR}^{(t)}-\bar{\bE}^*)=:I+II,
	\]
	
	For $I$, by Lemma 3 of \cite{Abbe-et-al_2020_AoS}, $\|\bar{\bE}\bar{\bR}-\bar{\bE}^{(t)}\bar{\bR}^{(t)}\|
	\leq\|\bar{\bE}\bar{\bE}'-\bar{\bE}^{(t)}(\bar{\bE}^{(t)})'\|\lesssim_\P \|(\bar{\bE}\bar{\bR})_{t\cdot}\|$ 
	since by Theorem \ref{lem: consistency eigenstructure}, 
	$\Delta\geq\kappa\Delta^*$ for some $\kappa>0$ w.p.a. 1 and $\|\bG\|_{2\rightarrow\infty}=o_\P(\Delta^*)$ uniformly over $i$. Therefore,
	$\|\bG_{t\cdot}(\bar{\bE}\bar{\bR}-\bar{\bE}^{(t)}\bar{\bR}^{(t)})\|\lesssim \|\bG_{t\cdot}\|\|(\bar{\bE}\bar{\bR})_{t\cdot}\|\lesssim_\P\sqrt{K}\|(\bar{\bE}\bar{\bR})_{t\cdot}\|$ uniformly over $i$.
	
	For $II$, again, by Bernstein inequality and the truncation argument, 
	$$\|\bU_{t\cdot, \subi}(\bar{\bE}_{\mathtt{R}}^{(t)}\bar{\bR}^{(t)}-\bar{\bE}_{\mathtt{R}}^*)\|\lesssim_\P
	\sqrt{\log (p\vee n)}
	(1\vee\sqrt{K}\|\bar{\bE}_{\mathtt{R}}^{(t)}\bar{\bR}^{(t)}-\bar{\bE}_{\mathtt{R}}^*\|_{2\rightarrow\infty})$$
	uniformly over $t$ and $i$ by the leave-one-out   construction. 
	By Lemma 3 of \cite{Abbe-et-al_2020_AoS}, $\|\bar{\bE}_{\mathtt{R}}^{(t)}\bar{\bR}^{(t)}-\bar{\bE}_{\mathtt{R}}^*\|_{2\rightarrow\infty}\lesssim_\P
	\|\bar{\bE}_{\mathtt{L}}\bar{\bR}\|_{2\rightarrow\infty}+ \|\bar{\bE}_{\mathtt{R}}\bar{\bR}\|_{2\rightarrow\infty}+\|\bar{\bE}_{\mathtt{R}}^*\|_{2\rightarrow\infty}$.
	Then,
	it remains to bound $\bG^*_{t\cdot}(\bar{\bE}_{\mathtt{R}}^{(t)}\bar{\bR}^{(t)}-\bar{\bE}_{\mathtt{R}}^*)$.
	Recall that by definition of singular vectors, 
	$\bar{\bE}_{\mathtt{R}}$ and $\bar{\bE}_{\mathtt{R}}^*$ are orthogonal to other leading right singular vectors. 
	Thus, we can insert projection matrices $\bM_{\bar{\bE}_{\bar{\mathcal{C}}_{\ell-1},\mathtt{R}}}$ and $\bM_{\bar{\bE}^*_{\bar{\mathcal{C}}_{\ell-1}, \mathtt{R}}}$ as follows:
	\[
	\begin{split}
		\bG^*_{t\cdot}(\bar{\bE}_{\mathtt{R}}^{(t)}\bar{\bR}^{(t)}-\bar{\bE}_{\mathtt{R}}^*)
		=\,&\bG^*_{t\cdot}\bM_{\bE^{(t)}_{\bar{\mathcal{C}}_{\ell-1},\mathtt{R}}}\bar{\bE}_{\mathtt{R}}^{(t)}\bar{\bR}^{(t)}-
		\bG_{t\cdot}^*\bM_{\bE^*_{\bar{\mathcal{C}}_{\ell-1},\mathtt{R}}}\bar{\bE}_{\mathtt{R}}^*\\
		=\,&\bG^*_{t\cdot}(\bM_{\bE^{(t)}_{\bar{\mathcal{C}}_{\ell-1},\mathtt{R}}}-
		\bM_{\bLambda_{\bar{\mathcal{C}}_{\ell-1}}})\bar{\bE}_{\mathtt{R}}^{(t)}\bar{\bR}^{(t)}\\
		&-\bG_{t\cdot}^*(\bM_{\bE^*_{\bar{\mathcal{C}}_{\ell-1},\mathtt{R}}}-
		\bM_{\bLambda_{\bar{\mathcal{C}}_{\ell-1}}})\bar{\bE}_{\mathtt{R}}^*
		+\bG^*_{t\cdot}\bM_{\bLambda_{\bar{\mathcal{C}}_{\ell-1}}}(\bar{\bE}_{\mathtt{R}}^{(t)}\bar{\bR}^{(t)}-\bar{\bE}_{\mathtt{R}}^*),
	\end{split}
	\]
	where $\bE_{\bar{\mathcal{C}}_{\ell-1},\mathtt{R}}$ and $\bE_{\bar{\mathcal{C}}_{\ell-1},\mathtt{R}}^*$ denote the matrices of the first $\sum_{k=1}^{\ell-1}\ttd_{ik}$ right singular vectors of $\bX_{\subi}$ and $\bF_{ \subi}\bLambda_{\subi}'$  respectively and 
	$\bLambda_{\bar{\mathcal{C}}_{\ell-1}}$ denotes the submatrix of $\bLambda_{\subi}$ formed by the first $\sum_{k=1}^{\ell-1}\ttd_{ik}$ columns. 
	By the same argument in the proof of Theorem \ref{lem: consistency eigenstructure},
	$\|\bG^*_{t\cdot}(\bar{\bE}_{\mathtt{R}}^{(t)}\bar{\bR}^{(t)}-\bar{\bE}_{\mathtt{R}}^*)\|\lesssim_\P \sqrt{K}\upsilon_{\bar\ttd_{i\ell},\subi}$.
	Moreover, 
	\begin{align*}
	s_{\min}(\bar{\bSigma}^*\bar{\bE}_{\mathtt{R}}^*)
	\|\bar{\bE}^*_{\mathtt{L}}\|_{2\rightarrow\infty}
	&\lesssim\|\bar{\bE}_{\mathtt{L}}^*\bar{\bSigma}^*\bar{\bE}_{\mathtt{R}}^*\|_{2\rightarrow\infty}\\
	&\leq \|\bM_{\bE^*_{\bar{\mathcal{C}}_{\ell-1},\mathtt{L}}}\bF_{\subi}\bLambda_{\subi}'\bM_{\bE^*_{\bar{\mathcal{C}}_{\ell-1},\mathtt{R}}}\|_{2\rightarrow\infty}\\
	&\lesssim_\P \sqrt{K}\upsilon_{\bar\ttd_{i\ell},\subi},
	\end{align*} 
	where $s_{\min}(\bar{\bSigma}^*\bar{\bU}_{\mathtt{R}}^*)\gtrsim_\P\sqrt{pK}\upsilon_{\bar\ttd_{i\ell},\subi}$ by Assumption \ref{Assumption: LPCA}.
	Then, $\|\bar{\bE}_{\mathtt{L}}^*\|_{2\rightarrow\infty}\lesssim_\P p^{-1/2}$.
	Now, by plugging in all these bounds and rearranging the terms in Equation \eqref{SA-eq: lemma eigen bound}, we have
	$$\|\bar{\bE}_{\mathtt{L}}\bar{\bR}\|_{2\rightarrow\infty}\lesssim_\P
	\frac{1}{\sqrt{p}}+\frac{\sqrt{\log (n\vee p)}}{\sqrt{p}\upsilon_{\bar\ttd_{i\ell},\subi}}\|\bE_{\mathtt{R}}\bar{\bR}\|_{2\rightarrow\infty}$$
	uniformly over $\subi$.
	
	The analysis of $\bar{\bE}_{\mathtt{R}}$ is exactly the same by considering $p^\ddagger+1\leq t\leq p^\ddagger+K$, and we have the following bound (uniformly over $\subi$)
	$$\|\bar{\bE}_{\mathtt{R}}\bar{\bR}\|_{2\rightarrow\infty}\lesssim_\P
	\frac{1}{\sqrt{K}}+\frac{\sqrt{\log (n\vee p)}}{\sqrt{K}\upsilon_{\bar\ttd_{i\ell},\subi}}\|\bE_{\mathtt{L}}\bar{\bR}\|_{2\rightarrow\infty}.$$
	
	The desired sup-norm bound follows by combining the two inequalities.
\end{proof}

\subsubsection{Proof of Theorem \ref{thm: uniform convergence of factors}}
	Expand the estimator of factor loadings:
	$$\widehat{\bLambda}_{\subi}-\bLambda_{\subi}\bR_{\subi}
	=
	\frac{1}{p^\ddagger}\bXi_{\subi}'\widehat{\bF}_{\subi}+
	\frac{1}{p^\ddagger}\bU_{\subi}'\widehat{\bF}_{\subi}=:I+II$$ 
	for $\bR_{\subi}=\frac{1}{p^\ddagger}\bF_{\subi}'\widehat{\bF}_{\subi}$. 
	For $I$,
	$
	\underset{1\leq i\leq n}{\max}\,\underset{1\leq k\leq K}{\max}\,
	\|\frac{1}{p^\ddagger}\sum_{l\in\mathcal{R}_2}
	\widehat{\bF}_{l\cdot,\subi}\xi_{j_k(i)l}\|\lesssim_\P h_n^m$.
	
	For $II$, again, use the ``leave-one-out'' trick as in the proof of Lemma \ref{lem: eigenvector bound}. Specifically,
	For each $1\leq k\leq K$, let $p^{-1/2}\widehat{\bF}_{\subi}^{(k)}$ be the left singular vector of $\bX_{\subi}^{(k)}$, where a generic $(s,l)$th entry of $\bX_{\subi}^{(k)}$ is given by
	$x_{sl,\subi}^{(k)}=x_{sl,\subi}\I(l\neq k)$. 
	Due to the possibility of equal eigenvalues, introduce rotation matrices $\bar{\bR}$ and $\bar{\bR}^{(k)}$ the same way as in the proof of Lemma \ref{lem: eigenvector bound}.	
	Write
	\[
	II\times\bar{\bR}=
	\frac{1}{p^\ddagger}\bU_{\subi}'\widehat{\bF}_{\subi}^{(k)}\bar{\bR}^{(k)}
	+\frac{1}{p^\ddagger}\bU_{\subi}'(\widehat{\bF}_{\subi}\bar{\bR}-\widehat{\bF}_{\subi}^{(k)}\bar{\bR}^{(k)}).
	\]
	
	By construction, $\bU_{\cdot k,\subi}$ and $\widehat{\bF}_{\subi}^{(k)}$ are independent. Let $\mathscr{M}_n$ be the $\sigma$-field generated by $\widehat{\bF}_{\subi}^{(k)}$. By the leave-one-out construction,
	$\E[u_{sk}\widehat{\bF}_{s\cdot,\subi}^{(k)}|\mathscr{M}_n]=\bm{0}$ and 
	$$\V\Big[\frac{1}{\sqrt{p^\ddagger}}\sum_{s\in\mathcal{R}^\ddagger}u_{sk}\widehat{\bF}_{s\cdot,\subi}^{(k)}|\mathscr{M}_n\Big]\lesssim_\P 1.$$ By Lemma \ref{lem: eigenvector bound}, $\|\widehat{\bF}_{s\cdot,\subi}^{(k)}\|_{\max}\lesssim_\P 1$, and then by a similar argument as given in the proof of Lemma \ref{thm: IMD hom}, the first term is $O_\P(\delta_n^{-1})$ uniformly over $1\leq i\leq n$ and $1\leq k\leq K$.	
	
	For the second term, for any $1\leq k\leq K$,
	$\|\frac{1}{\sqrt{p^\ddagger}}\bU_{\cdot k,\subi}'
	(\frac{1}{\sqrt{p^\ddagger}}\widehat{\bF}_{\subi}\bar{\bR}-
	\frac{1}{\sqrt{p^\ddagger}}\widehat{\bF}_{\subi}^{(k)}\bar{\bR}^{(k)})\|\leq
	\frac{1}{\sqrt{p^\ddagger}}\|\bU_{\cdot k,\subi}\|
	\|\bP_{\widehat{\bF}_{\subi}}-\bP_{\widehat{\bF}_{\subi}^{(k)}}\|$.
	Note $\underset{1\leq i\leq n}{\max}\underset{1\leq k\leq K}{\max} \frac{1}{\sqrt{p^\ddagger}}\|\bU_{\cdot k,\subi}\|\lesssim_\P 1$.
	Moreover, by the result given in the proof of Lemma \ref{lem: eigenvector bound}, $\|\bP_{\widehat{\bF}_{\subi}}-\bP_{\widehat{\bF}_{\subi}^{(k)}}\|
	\lesssim_\P\delta_n^{-1}$ uniformly over $i$ and $k$.
	Then, the result for factor loadings follows.
	
	Now, consider the estimated factors:
	{\small
		\begin{equation}\label{SA-eq: decomp of factors}
			\begin{split}
			\widehat{\bF}_{\subi}\widehat{\bOmega}_{\subi}=\,
			&\bF_{\subi}(\bR_{\subi}')^{-1}\widehat{\bOmega}_{\subi}+\\
			&\frac{1}{K}\bF_{\subi}(\bR_{\subi}')^{-1}(\bLambda_{\subi}\bR_{\subi}-\widehat{\bLambda}_{\subi})'\widehat{\bLambda}_{\subi}+
			\frac{1}{K}\bXi_{\subi}\widehat{\bLambda}_{\subi}+
			\frac{1}{K}\bU_{\subi}\widehat{\bLambda}_{\subi}.
		\end{split}
	\end{equation}
	}
	\vspace{-1.2em}
	
	Consider the $\ell$th-order approximation (columns with indices in $\mathcal{C}_\ell$). By Lemma \ref{lem: eigenvector bound},
	the elements in the second term is uniformly $O_\P((\delta_n^{-1}+h_n^m)\upsilon_{\bar\ttd_{i\ell},\subi})$; 
	the elements in the third term is uniformly $O_\P(\upsilon_{\bar\ttd_{i\ell},\subi}h_n^m)$;
	the elements in the fourth term is $O_\P((\delta_n^{-1}+h_n^m)\upsilon_{\bar\ttd_{i\ell},\subi})$ by the previous result. Note that 
	$\bR_{\subi}$ is invertible as explained in the proof of Lemma \ref{lem: consistency eigenstructure}. 
	Moreover,
	$\bR_{\subi}\bR_{\subi}'=\frac{1}{(p^\ddagger)^2}\bF_{\subi}'\widehat{\bF}_{\subi}
	\widehat{\bF}_{\subi}'\bF_{\subi}=\frac{1}{p^\ddagger}\bF_{\subi}'\bP_{\widehat{\bF}_{\subi}}\bF_{\subi}$.
	The eigenvalues of this matrix is bounded from both above and below in probability by Assumption \ref{Assumption: LPCA}(c) and the consistency of projection matrices due to Theorem \ref{lem: consistency eigenstructure}.
	Then, the proof is complete.

\subsubsection{Proof of Corollary \ref{coro: consistency of latent mean}}
Let $\widehat{\blambda}_{j_k(i)}$ and $\blambda_{j_k(i)}$ be the local factor loadings in $\widehat{\bLambda}_{\subi}$ and $\bLambda_{\subi}$ respectively corresponding to the $j_k(i)$th unit. By construction of the estimator, 
$\widehat{\eta}_{l}(\balpha_{j_k(i)})-\eta_l(\balpha_{j_k(i)})
=\widehat{\bF}_{l\cdot,\subi}\widehat{\blambda}_{j_k(i)}-\bF_{l\cdot,\subi}\blambda_{j_k(i)}
-\xi_{j_k(i)l}
\lesssim_\P 
(\widehat{\bF}_{l\cdot,\subi}-\bF_{l\cdot,\subi}(\bR_{\subi}')^{-1})\widehat{\blambda}_{j_k(i)}+
\bF_{l\cdot,\subi}(\bR_{\subi}')^{-1}(\widehat{\blambda}_{j_k(i)}-\bR_{\subi}'\blambda_{j_k(i)})+h_n^m$.
By Theorem \ref{thm: uniform convergence of factors},
the $j$th entry of $(\widehat{\bF}_{l\cdot,\subi}-\bF_{l\cdot,\subi}(\bR_{\subi}')^{-1})$ is 
$O_\P((\delta_n^{-1}+h_n^m)\upsilon_{j,\subi}^{-1})$ for each $j\in[\ttd_i]$, and 
$\widehat{\blambda}_{j_k(i)}-\bR_{\subi}'\blambda_{j_k(i)}\lesssim_\P\delta_n^{-1}+h_n^m$
(these bounds also hold uniformly over $l, k$ and $\subi$). Then, the result follows.

\subsection{Proof of Theorem \ref{thm: robustness}}
	Let $\widehat{\bF}_{\subi}^\circ$ and $\widehat{\bLambda}^\circ_{\subi}$ be the (infeasible) estimator obtained by applying Algorithm \hyperref[algorithm]{1}  to the full matrix $\bY^N$. By Corollary \ref{coro: consistency of latent mean}, 
	$\max_{1\leq i\leq n}
	\|\widehat{\bF}_{\subi}^\circ(\widehat{\bLambda}_{\subi}^\circ)'-\bH_{\subi}\|_{\max}\lesssim_\P
	\delta_n^{-1}+h_n^m$. Then, it remains to show the closeness between the $\widehat{\bF}_{\subi}\widehat{\bLambda}_{\subi}'$ and $\widehat{\bF}_{\subi}^\circ(\widehat{\bLambda}_{\subi}^\circ)'$.
	
	Define the matrices $\bG$, $\bG^\circ$ and their eigendecompositions as in the proof of Lemma \ref{lem: eigenvector bound}. 
	Note that  $\widehat{\bF}_{\subi}\widehat{\bLambda}_{\subi}'=
	\sum_{\ell=0}^{g_i}\widehat{\bF}_{\gr{\ell},\subi}\widehat{\bLambda}_{\gr{\ell},\subi}'$
	and $\widehat{\bF}_{\subi}^\circ(\widehat{\bLambda}_{\subi}^\circ)'=
	\sum_{\ell=0}^{g_i}\widehat{\bF}_{\gr{\ell},\subi}^\circ(\widehat{\bLambda}_{\gr{\ell},\subi}^\circ)'$. Without loss of generality, we only consider one generic eigenspace indexed by some $\ell\in\{0,1,\cdots, g_i\}$. Let $\bar{\bE}$  be the matrix of eigenvectors that form this eigenspace of $\bG$, and $\bar{\bE}^\circ$  be the analogous matrix for $\bG^\circ$. Define $\bar{\bR}=\bar{\bE}'\bar{\bE}^*$ and $\bar{\bR}^\circ=(\bar{\bE}^\circ)'\bar{\bE}^*$. Let $\mathtt{sng}(\bar{\bR})$ and  $\mathtt{sgn}(\bar{\bR}^\circ)$ be the matrix sign functions of $\bar{\bR}$ and $\bar{\bR}^\circ$ respectively.

	By Davis-Kahan $\sin\Theta$ theorem \citep{davis1970rotation} and the missing pattern assumed in the theorem, we have
	\begin{align*}
		&\|\bar{\bE}^\circ\bar{\bR}^\circ-\bar{\bE}\bar{\bR}\|\leq
		\|\bar{\bE}^\circ(\bar{\bE}^\circ)'-\bar{\bE}\bar{\bE}'\|\\
		\lesssim \;&\frac{\|(\bG-\bG^\circ)\bar{\bE}\|}{\Delta_\ell^\circ}\lesssim_\P
		\frac{\|\bG-\bG^\circ\|_\infty\|\bar{\bE}\|_{\max}}{\sqrt{pK}\upsilon_{\bar\ttd_{i\ell,\subi}}}
		\lesssim \frac{1}{\sqrt{pK}\delta_n\upsilon_{\bar\ttd_{i\ell,\subi}}}.
	\end{align*}
   where $\Delta^\circ_\ell$ is the eigen-gap defined in the same way as in the proof of Lemma \ref{lem: eigenvector bound} and $\|\cdot\|_\infty$ denotes the maximum absolute row sum. 
	This suffices to show that
	$\|\widehat{\bF}_{\gr\ell,\subi}^\circ\bar{\bR}^\circ-\widehat{\bF}_{\gr\ell,\subi}\bar{\bR}\|\lesssim_\P\frac{1}{\sqrt{K}\upsilon_{\bar{\ttd}_{i\ell,\subi}}\delta_n}$
	and $\|\widehat{\bLambda}_{\gr\ell,\subi}^\circ\bar{\bR}^\circ-\widehat{\bLambda}_{\gr\ell,\subi}\bar{\bR}\|\lesssim_\P\frac{1}{\sqrt{p}\delta_n}$.
	Moreover, by Lemma 2 of \cite{Abbe-et-al_2020_AoS},
	\[
	\|\bar{\bR}-\mathtt{sgn}(\bar{\bR})\|\lesssim_\P\frac{\sqrt{p}\vee\sqrt{K}\vee\sqrt{pK}h^m}{\sqrt{pK}\upsilon_{\bar\ttd_{i\ell,\subi}}},\quad
	\|\bar{\bR}^\circ-\mathtt{sgn}(\bar{\bR}^\circ)\|\lesssim_\P
	\frac{\sqrt{p}\vee\sqrt{K}\vee\sqrt{pK}h^m}{\sqrt{pK}\upsilon_{\bar{\ttd}_{i\ell,\subi}}}.
	\]
	Combining all these results, we conclude that
	\begin{align*}
		&\|\widehat{\bF}_{\gr{\ell},\subi}\widehat{\bLambda}_{\gr{\ell},\subi}'- \widehat{\bF}_{\gr{\ell},\subi}^\circ(\widehat{\bLambda}_{\gr{\ell},\subi}^\circ)'\|\\
		=&	\|\widehat{\bF}_{\gr{\ell},\subi}\mathtt{sgn}(\bar{\bR})(\widehat{\bLambda}_{\gr{\ell},\subi}\mathtt{sgn}(\bar{\bR}))'- \widehat{\bF}_{\gr{\ell},\subi}^\circ\mathtt{sgn}(\bar{\bR}^\circ)(\widehat{\bLambda}_{\gr{\ell},\subi}^\circ\mathtt{sgn}(\bar{\bR}^\circ))'\|\\
		\lesssim_\P&\;\delta_n^{-1}+h_n^m.
	\end{align*}
	Then, the desired result follows.

\end{appendices}

\bibliography{Feng_2023_NonlinearFactorModel--Bibliography}
\bibliographystyle{econometrica}

\end{document}


\title{\vspace{-0.25in}\Large Optimal Estimation of Large-Dimensional Nonlinear Factor Models\\
	\medskip
	Supplemental Appendix
\bigskip }
\author{
	Yingjie Feng\thanks{School of Economics and Management, Tsinghua University.}}
\maketitle

\vspace{1em}

\begin{abstract}
This supplement gives omitted theoretical proofs of the results discussed in the main paper, and additional technical and numerical results, which may be of independent interest. Section \ref{SA-sec: main results} presents the formal theory for the more general covariates-adjusted nonlinear factor model.
Section \ref{SA-sec: discussion of conditions} gives some further discussion about the conditions imposed in the main paper. 
Section \ref{SA-sec: proofs} gives all omitted proofs for the main paper as well as proofs  for this supplement. Section \ref{SA-sec: simulation} gives additional simulation results.  
\end{abstract}

\thispagestyle{empty}

\newpage\tableofcontents

\thispagestyle{empty}
\clearpage

\setcounter{page}{1}
\pagestyle{plain}



\medskip
\section{Notation}

We introduce the notation used in this supplement below.

\textbf{Latent functions.} For a generic sequence of functions $\{h_t(\cdot):1\leq t\leq M\}$ defined on a compact support, let $\nabla^{\ell}\bh_t(\cdot)$ be a vector of $\ell$th-order partial derivatives of $h_t(\cdot)$, and define $\mathscr{D}^{[\kappa]}\bh_t(\cdot)=(\nabla^{0}\bh_t(\cdot)', \cdots, \nabla^{\kappa}\bh_t(\cdot)')'$, i.e., a column vector that stores all partial derivatives of $h_t(\cdot)$ up to order $\kappa\geq 0$. The derivatives on the boundary are understood as limits with the arguments ranging within the support.
When $\ell=1$, $\nabla\bh_t(\cdot):=\nabla^1\bh_t(\cdot)$ is the gradient vector, and the Jacobian matrix is 
$\nabla\bh(\cdot):=(\nabla\bh_1(\cdot), \cdots, \nabla\bh_{M}(\cdot))'$.

\textbf{Matrices.}
Several matrix norms are used throughout the paper. For a vector $\bv\in\mathbb{R}^{\ttd}$, $\|\bv\|=\sqrt{\bv'\bv}$ denotes its Euclidean norm. For an $m\times n$ matrix $\bA$, the Frobenius matrix norm is denoted by $\|\bA\|_{\tt F}=\sqrt{\tr(\bA\bA')}$,  
the $L^2$-operator norm by $\|\bA\|=s_1(\bA)$, and the entrywise sup-norm by $\|\bA\|_{\max}=\max_{1\leq i\leq m, 1\leq j\leq n}|a_{ij}|$, where $s_l(A)$ denotes the $l$th largest singular value of $\bA$.  To be more explicit, we write $s_{\max}(\bA):=s_1(\bA)$ and $s_{\min}(\bA):=s_{\min\{m,n\}}(\bA)$.  
Also, we use $\bA_{i\cdot}$ to denote the $i$th row and $\bA_{\cdot j}$ the $j$th column of $\bA$. More generally, for $\mathcal{C}\subseteq\{1,\cdots,n\}$ and $\mathcal{R}\subseteq\{1,\cdots, m\}$,  $\bA_{\cdot\mathcal{C}}$ denotes the submatrix of $\bA$ with column indices in $\mathcal{C}$, and $\bA_{\mathcal{R}\cdot}$ is the submatrix of $\bA$ with row indices in $\mathcal{R}$.

\textbf{Asymptotics.}
For sequences of numbers or random variables, $a_n\lesssim b_n$ denotes that $\limsup_n|a_n/b_n|$ is finite, and $a_n\lesssim_\P b_n$ or $a_n=O_\P(b_n)$ denotes  $\limsup_{\varepsilon\rightarrow\infty}\limsup_n\P[|a_n/b_n|\geq\varepsilon]=0$. $a_n=o(b_n)$ implies $a_n/b_n\rightarrow 0$, and $a_n=o_\P(b_n)$ implies $a_n/b_n\rightarrow_\P 0$, where $\rightarrow_\P$ denotes convergence in probability. $a_n\asymp b_n$ implies that $a_n\lesssim b_n$ and $b_n\lesssim a_n$, and $a_n\asymp_\P b_n$ implies $a_n\lesssim_\P b_n$ and $b_n\lesssim_\P a_n$.  For a sequence of random quantities \{$a_{n,i}:1\leq i\leq n\}$ indexed by $i$, $a_{n,i}$ is said to be $O_\P(b_n)$ (or $o_\P(b_n)$) uniformly over $1\leq i\leq n$, if $\max_{1\leq i\leq n}|a_{n,i}|=O_\P(b_n)$ (or $\max_{1\leq i\leq n}|a_{n,i}|=o_\P(b_n)$). 
In addition, ``w.p.a. 1'' means ``with probability approaching one''.

\textbf{Others.}
For two numbers $a$ and $b$, $a\vee b=\max\{a,b\}$ and $a\wedge b=\min\{a,b\}$. For a finite set $\mathcal{S}$, $|\mathcal{S}|$ denotes its cardinality. $[m]$ denotes the set $\{1, \cdots, m\}$ for a positive integer $m$.
Let $\widehat{\blambda}_{j_k(i)}$ and $\blambda_{j_k(i)}$ be the local factor loadings in $\widehat{\bLambda}_{\subi}$ and $\bLambda_{\subi}$ respectively corresponding to the $j_k(i)$th unit.

\section{Covariates-Adjusted Nonlinear Factor Model} \label{SA-sec: main results}

In this section we provide formal analysis of the generalized nonlinear factor model described in Section \ref{subsec: extensions}:
\begin{equation}\label{SA-eq: general model}
\bx_i=\bW_i\bvth+\bmeta(\balpha_i)+\bu_i, \quad \E[\bu_i|\mathscr{F}, \{\bW_i\}_{i=1}^n]=\bm{0}
\end{equation}
where $\bW_i=(\bw_{i,1}, \cdots, \bw_{i,q})$ is a $p\times q$ matrix of additional covariates, where $\bw_{i,\ell}=(w_{i1,\ell}, \cdots, w_{ip,\ell})'$ for each $\ell=1, \cdots, q$.

\subsection{Assumptions}

The following analysis is based on the main estimation procedure described in Section \ref{subsec: extensions} of the main paper. 
The first assumption concerns the structure of the high-rank covariates in Equation \eqref{SA-eq: general model}.

\begin{assumption}[High-Rank Covariates]
	\label{SA-Assumption: high-rank covariates}
	For $\ell=1, \ldots, q$, each $\bw_{i,\ell}$ satisfies that
	\[
	w_{it,\ell}=\hbar_{t,\ell}(\bvrho_{i,\ell})+e_{it,\ell}, \quad 
	\E[e_{it,\ell}|\mathcal{F}_\ell]=0,
	\]
	where $\{\bvrho_{i,\ell}\in\mathbb{R}^{r_{\ell}}:1\leq i\leq n\}$ is i.i.d. and  $\mathcal{F}_\ell$ is the $\sigma$-field generated by $\{\bvrho_{i,\ell}:1\leq i\leq n\}$ and $\{\hbar_{t,\ell}(\cdot):1\leq t\leq p,1\leq \ell\leq q\}$.
	Moreover, for some constant $c_{\min}>0$,
	$$s_{\min}\Big(\frac{1}{n|\mathcal{R}^\ddagger|}\sum_{i=1}^{n}\sum_{t\in\mathcal{R}^\ddagger}
	\be_{it}\be_{it}'\Big)\geq c_{\min} \quad \text{w.p.a. } 1,$$
	where $\be_{it}=(e_{it,1}, \cdots, e_{it,q})'$.
\end{assumption}

The condition above is akin to that commonly used in partially linear regression. It ensures that $\bw_{i,\ell}$'s are truly ``high-rank'': after partialling out the ``low-rank'' signals (factor components), these covariates are still non-degenerate, thus achieving identifiability of $\bvth$.
Also note that Assumption \ref{SA-Assumption: high-rank covariates} is general in the sense that the latent variables and latent functions associated with each $\bw_{i,\ell}$ could be different across $\ell$. To ease notation, introduce a $\sigma$-field $\mathcal{F}=\cup_{\ell=0}^{q}\mathcal{F}_\ell$ where $\mathcal{F}_0$ denotes the $\sigma$-field generated by $\{\balpha_i:1\leq i\leq n\}$. Let $\bar{r}$ be the number of \textit{distinct} variables among $\balpha_i,\bvrho_{i,1},\cdots, \bvrho_{i,q}$.

We impose some standard regularity conditions on the latent variables, latent functions and idiosyncratic errors, which are analogous 
to Assumption \ref{Assumption: Regularities} in the main paper.
\begin{assumption}[Regularities]
	\label{SA-Assumption: regularities}\leavevmode
	Let $\bar{m}\geq 2$ and $\nu>0$ be some constants.
	\begin{enumerate}[label=(\alph*)]
		\item For all $t\in [p]$, $\hbar_{t,\ell}(\cdot)$ and $\eta_t(\cdot)$ are $\bar{m}$-times continuously differentiable with all partial derivatives of order no greater than $\bar{m}$ bounded by a universal constant;
		
		\item $\{\balpha_i: 1\leq i\leq n\}$ and 
		$\{\bvrho_{i,\ell}: 1\leq i\leq n\}$ for each $\ell\in[q]$ have compact convex supports with densities bounded and bounded away from zero;
		
		\item Conditional on $\mathcal{F}$, $\{(u_{it}, \be_{it}):1\leq i\leq n,1\leq t\leq p\}$ is independent over $i$ and across $t$ with zero means, and 
		$\max_{1\leq i\leq n,1\leq t\leq p}\E[|u_{it}|^{2+\nu}|\mathcal{F}]<\infty$ and 
		$\max_{1\leq i\leq n,1\leq t\leq p}\E[|e_{it}|^{2+\nu}|\mathcal{F}]<\infty$ a.s. on $\mathcal{F}$;
	\end{enumerate}
\end{assumption}

Next, we impose conditions for indirect matching that is analogous to Assumption \ref{Assumption: Matching}.
To ease our exposition, for $\ell\in [q]$, we define the $p\times n$ matrices
$\bH$, $\bar{\bH}_{\lceil\ell\rfloor}$, $\bar{\bar{\bH}}$, $\bW_{\lceil \ell\rfloor}$ and $\tilde\bX$ whose $(t,i)$th elements are respectively given by
\begin{align*}
&H_{it}=\eta_t(\balpha_i),\quad
\bar{H}_{it, \lceil\ell\rfloor}=\hbar_{t,\ell}(\bvrho_{i,\ell}),\quad
\bar{\bar{H}}_{it}=\eta_t(\balpha_i)+\sum_{\ell=1}^{q}\hbar_{t,\ell}(\bvrho_{i,\ell})\vartheta_\ell, \\
&W_{it,\lceil\ell\rfloor}=w_{it,\ell},\quad \tilde{X}_{it}=\eta_t(\balpha_i)+u_{it}+\bW_i(\bvth-\widehat{\bvth}).
\end{align*}
As in the main paper, we use superscripts $\dagger$, $\ddagger$ and $\wr$ to 
denote the subset of features that belong to $\mathcal{R}^\dagger$, $\mathcal{R}^\ddagger$ and $\mathcal{R}^\wr$ respectively. 
For example, $\bH^{\dagger}$, $\bH^{\ddagger}$ and $\bH^{\wr}$ are submatrices of $\bH$ with row indices in $\mathcal{R}^\dagger$, $\mathcal{R}^\ddagger$ and $\mathcal{R}^\wr$ respectively.

\begin{assumption}[Indirect Matching] 
	\label{SA-Assumption: indirect matching}\leavevmode
		For some fixed positive constants $\rho_0$, $\lvarsig$, $\uvarsig$ and some positive sequence $a_{n}=o(1)$, the following conditions hold: 
		\[
		\begin{split}
		&\underset{1\leq i,j\leq n}{\max}\;|\rho(\bW_{\cdot i, \lceil\ell\rfloor}^\dagger, \bW_{\cdot j, \lceil\ell\rfloor}^\dagger)-
		\rho(\bar{\bH}_{\cdot i, \lceil\ell\rfloor}^\dagger, \bar\bH_{\cdot j, \lceil\ell\rfloor}^\dagger) 
		-\rho_0|\lesssim_\P a_{n}, \quad \ell\in[q],\\
		&\underset{1\leq i,j\leq n}{\max}\;|\rho(\bX_{\cdot i}^\dagger, \bX_{\cdot j}^\dagger)-
		\rho(\bar{\bar{\bH}}_{\cdot i}^\dagger, \bar{\bar{\bH}}_{\cdot j}^\dagger)
		-\rho_0|\lesssim_\P a_{n},\\
		&\underset{1\leq i,j\leq n}{\max}\;|\rho(\tilde\bX_{\cdot i}^\ddagger, \tilde\bX_{\cdot j}^\ddagger)-
		\rho(\bH_{\cdot i}^\ddagger, \bH_{\cdot j}^\ddagger)
		-\rho_0|\lesssim_\P a_{n},
		\end{split}
		\]
		and
		for each $\bM=\bH^{\ddagger}, \bar{\bH}_{\lceil\ell\rfloor}^\dagger$ ($\ell\in[q]$) and $\bar{\bar{\bH}}^\dagger$ , 
		$$\underset{1\leq i,j\leq n\atop \balpha_i\neq \balpha_j}{\max}\;\frac{\rho(\bM_{\cdot i},\, \bM_{\cdot j})}{\|\balpha_i-\balpha_j\|^{\uvarsig}}
		\lesssim_\P 1\quad \text{and}\quad 
		\underset{1\leq i,j\leq n\atop \balpha_i\neq \balpha_j}{\min}\;\frac{\rho(\bM_{\cdot i},\, \bM_{\cdot j})}{\|\balpha_i-\balpha_j\|^{\lvarsig}}\gtrsim_\P 1.$$
\end{assumption}

Next, we impose conditions for local principal component analysis that is analogous to Assumption \ref{Assumption: LPCA} in the main paper. To ease the exposition, we introduce the notation $\bH_{\subi}$, $\bar\bH_{\subi,\lceil\ell\rfloor}$ and $\bar{\bar{\bH}}_{\subi}$ that are submatrices of $\bH^\wr$,
$\bar\bH_{\lceil\ell\rfloor}^{\ddagger}$ and $\bar{\bar{\bH}}^\ddagger$ respectively, which only contain the columns of nearest neighbors of the unit $i$ obtained in the estimation procedure. In addition, $\E[\cdot|\mathcal{N}_i]$ below denotes the expectation taking the information used for matching as fixed. Note that for each of $\bH_{\subi}$, $\bar\bH_{\subi,\lceil\ell\rfloor}$ and $\bar{\bar{\bH}}_{\subi}$, $\E[\cdot|\mathcal{N}_i]$ below may have different conditioning sets.
Also, define  $\delta_{n}=(K^{1/2}\wedge p^{1/2})/\sqrt{\log (n\vee p)}$, $h_{n,\varrho}=(K/n)^{\frac{\uvarsig}{\lvarsig\bar{r}}}$, 
$h_{n,\alpha}=(K/n)^{\frac{\uvarsig}{\lvarsig r}}$.

\begin{assumption}[Local Approximation]
	\label{SA-Assumption: local approximation}\leavevmode
			For each $\bM=\bH_{\subi}, 
			\bar\bH_{\subi,\lceil\ell\rfloor} (\ell=1, \cdots, q)$ and $\bar{\bar{\bH}}_{\subi}$, 
			$\bM$ admits an $L^2$-type decomposition 
			$\bM_{\subi}=\bF_{\subi}^{\textsf{M}}(\bLambda_{\subi}^{\textsf{M}})'+\bXi_{\subi}^{\textsf{M}}+\bU_{\subi}^{\textsf{M}}$ 
			with $\bLambda_{\subi}^{\textsf{M}}\in\mathbb{R}^{K\times d_i^{\textsf{M}}}$, $\bF_{ \subi}^{\textsf{M}}=
			\E[\bM_{\subi}
			\bLambda_{\subi}^{\textsf{M}}|\mathcal{N}_i]
			\E[(\bLambda_{\subi}^{\textsf{M}})'\bLambda_{\subi}^{\textsf{M}}|\mathcal{N}_i]^{-1}$ and the following conditions satisfied:
	\begin{enumerate}[label=(\alph*)]
		\item 
		For $i=1, \cdots, n$, there exists some diagonal matrix $\bUpsilon_{\subi}^{\textsf{M}}=\diag(\upsilon_{1,\subi}^{\textsf{M}}, \cdots, \upsilon^{\textsf{M}}_{\ttd_i^{\textsf{M}},\subi})$ with 
		$\upsilon_{1,\subi}^{\textsf{M}}\geq\cdots\geq\upsilon^{\textsf{M}}_{\ttd_i^{\textsf{M}},\subi}$ such that 
		\begin{align*}
			&\underset{1\leq i\leq n}{\max}
			\|\bLambda_{\subi}^{\textsf{M}}(\bUpsilon_{\subi}^{\textsf{M}})^{-1}\|_{\max}\lesssim_\P 1,\\ 
			&\min_{1\leq i\leq n}s_{\min}\Big(\frac{1}{K}(\bUpsilon_{\subi}^{\textsf{M}})^{-1}(\bLambda_{\subi}^{\textsf{M}})'
			\bLambda_{\subi}^{\textsf{M}}(\bUpsilon_{\subi}^{\textsf{M}})^{-1}\Big)\gtrsim_\P 1\\
			&\max_{1\leq i\leq n}s_{\max}\Big(\frac{1}{K}(\bUpsilon_{\subi}^{\textsf{M}})^{-1}(\bLambda_{\subi}^{\textsf{M}})'
			\bLambda_{\subi}^{\textsf{M}}(\bUpsilon_{\subi}^{\textsf{M}})^{-1}\Big)\lesssim_\P 1.
		\end{align*}
		Either $\upsilon_{j,\subi}^{\textsf{M}}/\upsilon_{j+1,\subi}^{\textsf{M}}\lesssim 1$ or $\upsilon_{j,\subi}^{\textsf{M}}/\upsilon_{j+1,\subi}^{\textsf{M}}\rightarrow\infty$ holds for $j=1, \cdots, \ttd_i^{\textsf{M}}-1$;
		\item For some $m\leq\bar{m}$,  
		$\underset{1\leq i\leq n}{\max}\|\bXi_{\subi}^{\textsf{M}}\|_{\max}\lesssim_\P h_{n,\varrho}^m=o(\upsilon_{\ttd_i^{\textsf{M}},\subi})$ and $\delta_n^{-1}/\upsilon_{\ttd_i^{\textsf{M}},\subi}=o(1)$;
		
		\item $1\lesssim_\P\underset{1\leq i\leq n}{\min} 
		s_{\min}\Big(\frac{1}{p}(\bF_{\subi}^{\textsf{M}})'\bF_{\subi}^{\textsf{M}}\Big)\leq
		\underset{1\leq i\leq n}{\max}s_{\max}\Big(\frac{1}{p}
		(\bF_{\subi}^{\textsf{M}})'\bF_{\subi}^{\textsf{M}}\Big)\lesssim_\P 1$.
	\end{enumerate}
\end{assumption}


\subsection{Main Results}
To begin with, the following lemma derives the rate of convergence for $\widehat{\bvth}$, which makes it possible to separate the factor component of interest from others in Equation \eqref{SA-eq: general model}.

\begin{lem} \label{SA-lem: slope of covariates}
	Under Assumptions \ref{SA-Assumption: high-rank covariates}--\ref{SA-Assumption: local approximation},
	 if $(np)^{\frac{2}{\nu}}\delta_n^{-2}\lesssim 1$, then 
	$\|\widehat{\bvth}-\bvth\|\lesssim_\P \delta_n^{-1}+h_{n,\varrho}^{2m}$.
\end{lem}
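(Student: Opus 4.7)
The plan is to view $\widehat{\bvth}$ as a partialled-out least-squares estimator built on two separate LPCA nuisance estimates, and then to track how the first-stage errors propagate into the slope. Based on the procedure in Section \ref{subsec: extensions}, $\widehat{\bvth}$ should arise from OLS of the residuals $\widehat{y}_{it}=x_{it}-\widehat{\bar{\bar{H}}}_{it}$ on $\widehat{\be}_{it}=\bw_{it}-\widehat{\bar{H}}_{it,\cdot}$, indexed over $i\in[n]$ and $t\in\mathcal{R}^\ddagger$, where $\widehat{\bar{\bar{H}}}_{it}$ and $\widehat{\bar{H}}_{it,\cdot}$ are LPCA-based estimates of $\bar{\bar{H}}_{it}$ and $\bar{H}_{it,\cdot}$ respectively. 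The population identity $x_{it}-\bar{\bar{H}}_{it}=\be_{it}'\bvth+u_{it}$ then gives
\[
\widehat{\bvth}-\bvth=\widehat{\bQ}^{-1}\big(\mathsf{N}_1+\mathsf{N}_2+\mathsf{N}_3\big),
\]
where $\widehat{\bQ}=\tfrac{1}{n|\mathcal{R}^\ddagger|}\sum_{i,t}\widehat{\be}_{it}\widehat{\be}_{it}'$ and, with $\Delta\be_{it}=\widehat{\be}_{it}-\be_{it}$ and $\Delta y_{it}=\widehat{y}_{it}-y_{it}$, the numerator splits into $\mathsf{N}_1=\tfrac{1}{n|\mathcal{R}^\ddagger|}\sum_{i,t}\be_{it}u_{it}$, $\mathsf{N}_2=\tfrac{1}{n|\mathcal{R}^\ddagger|}\sum_{i,t}\be_{it}(\Delta y_{it}-\Delta\be_{it}'\bvth)$, and $\mathsf{N}_3=\tfrac{1}{n|\mathcal{R}^\ddagger|}\sum_{i,t}\Delta\be_{it}(u_{it}+\Delta y_{it}-\Delta\be_{it}'\bvth)$.

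Next I would pin down the ingredient rates. By the LPCA argument developed for the baseline model and redeployed here under Assumptions \ref{SA-Assumption: indirect matching} and \ref{SA-Assumption: local approximation} (applied separately to each $\bW_{\lceil\ell\rfloor}$ and to $\bX$), one obtains the uniform bound $\max_{i,t}\|\Delta\be_{it}\|\vee\max_{i,t}|\Delta y_{it}|\lesssim_\P\delta_n^{-1}+h_{n,\varrho}^m$. Combined with Assumption \ref{SA-Assumption: high-rank covariates} giving $s_{\min}(\tfrac{1}{n|\mathcal{R}^\ddagger|}\sum_{i,t}\be_{it}\be_{it}')\geq c_{\min}$ and a Weyl-type perturbation bound, this yields $\widehat{\bQ}^{-1}=O_\P(1)$. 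The pure noise term $\mathsf{N}_1$ is a sample average of conditionally independent, conditionally mean-zero random variables with $(2+\nu)$-moments (Assumption \ref{SA-Assumption: regularities}(c)), so $\|\mathsf{N}_1\|\lesssim_\P(n|\mathcal{R}^\ddagger|)^{-1/2}=o_\P(\delta_n^{-1})$ under the moment-tail condition $(np)^{2/\nu}\delta_n^{-2}\lesssim 1$. For $\mathsf{N}_3$, Cauchy--Schwarz together with the uniform LPCA rate gives $\|\mathsf{N}_3\|\lesssim_\P(\delta_n^{-1}+h_{n,\varrho}^m)(\delta_n^{-1}+h_{n,\varrho}^m+(n|\mathcal{R}^\ddagger|)^{-1/2})=O_\P(\delta_n^{-2}+h_{n,\varrho}^{2m})$, which is absorbed by $\delta_n^{-1}+h_{n,\varrho}^{2m}$ provided $\delta_n\gtrsim 1$.

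The main obstacle is the middle term $\mathsf{N}_2$: a naive Cauchy--Schwarz bound gives only $\|\mathsf{N}_2\|\lesssim_\P\delta_n^{-1}+h_{n,\varrho}^m$, which is too coarse. To sharpen, I would decompose each of $\Delta y_{it}$ and $\Delta\be_{it}$ into a deterministic bias part (measurable with respect to $\mathcal{F}$ and of order $h_{n,\varrho}^m$ by Assumption \ref{SA-Assumption: local approximation}(b)) and a stochastic part (of order $\delta_n^{-1}$). Because $\E[\be_{it}\mid\mathcal{F}]=0$ by Assumption \ref{SA-Assumption: high-rank covariates}, the sample average of $\be_{it}$ against the deterministic bias becomes a sum of conditional-mean-zero terms with conditional variance $O(h_{n,\varrho}^{2m}/(n|\mathcal{R}^\ddagger|))$, contributing at most $O_\P(h_{n,\varrho}^m(n|\mathcal{R}^\ddagger|)^{-1/2})$; the sum of $\be_{it}$ against the stochastic part is handled by the same matched-neighbor independence and maximal inequalities used in the main paper, yielding at most $O_\P(\delta_n^{-1})$ with residual self-interactions absorbed into $\delta_n^{-2}\lesssim\delta_n^{-1}$. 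Combining these bounds with $\widehat{\bQ}^{-1}=O_\P(1)$ yields $\|\widehat{\bvth}-\bvth\|\lesssim_\P\delta_n^{-1}+h_{n,\varrho}^{2m}$. The essential technical challenge is precisely this sharpening for $\mathsf{N}_2$, which reflects the double-robustness structure of two-step partialling-out when both nuisance components are estimated by local PCA.
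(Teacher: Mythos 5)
Your proposal is correct and takes essentially the same route as the paper's proof: the same OLS normal-equation decomposition, a Gram-matrix bound obtained from Assumption \ref{SA-Assumption: high-rank covariates} plus uniform LPCA consistency, and a sharpening of the linear cross term between the exogenous noise $\be_{it}$ and the first-stage nuisance error by exploiting its conditional-mean-zero structure. The only cosmetic difference is in the algebraic grouping of the numerator (your $\mathsf{N}_1,\mathsf{N}_2,\mathsf{N}_3$ by powers of the first-stage errors, versus the paper's $I_1,I_2,I_3$ which keep $\widehat{\be}_i$ on the outside throughout), and your bias-plus-stochastic split of the nuisance error is an abstract phrasing of what the paper carries out explicitly by substituting $\widehat{\hbar}_{it,\ell'}-\hbar_{it,\ell'}=-\xi_{it,\ell',\subi}+\widehat{\bff}_{t,\ell',\subi}'\widehat{\blambda}_{i,\ell',\subi}-\bff_{t,\ell',\subi}'\blambda_{i,\ell',\subi}$ and bounding each resulting piece.
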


\begin{remark}
	This is a by-product of our main analysis. In fact, it shows that the low-dimensional parameter in partially linear regression with a nonlinear factor structure can be consistently estimated. The rate of convergence above may not be sharp, since the calculation is simply based on the convergence rates of the underlying nonparametric estimators. 
\end{remark}

The next theorem, which is analogous to Theorems \ref{thm: IMD} and \ref{thm: uniform convergence of factors} in the main paper,  shows that the indirect matching discrepancy shrinks, and the local factors and factor loadings can be consistently estimated up to a rotation.

\begin{thm}[Local PCA with Covariates] \label{SA-thm: relevant FE}
	Under Assumptions \ref{SA-Assumption: high-rank covariates}--\ref{SA-Assumption: local approximation},
	if  $\delta_nh_{n,\varrho}^{2m}\lesssim 1$, and $(np)^{\frac{2}{\nu}}\delta_{n}^{-2}\lesssim 1$, then 
	\[
	\max_{1\leq i\leq n}
   \max_{1\leq k\leq K}
   \|\balpha_i-\balpha_{j_k(i)}\|\lesssim_\P
   (K/n)^{\uvarsig/(\lvarsig r)}
   +a_n^{1/\lvarsig},
   \]
   and
	there exists a matrix $\bR_{\subi}$ such that
	\begin{align*}
		&\max_{1\leq i\leq n}
		\|\widehat{\bF}_{\cdot k, \subi}-\bF_{\subi}(\bR_{\subi}^{-1})_{\cdot k}\|_{\max}\lesssim_\P
		(\delta_{n}^{-1}+h_{n,\alpha}^{m}+h_{n,\varrho}^{2m})\Big(\upsilon^{\bH_{\subi}}_{k,\subi}\Big)^{-1},\quad \text{for each } k\in[\ttd_i^{\bH_{\subi}}]\\
	&\max_{1\leq i\leq n}
	\|\widehat{\bLambda}_{\subi}-\bLambda_{\subi}\bR_{\subi}\|_{\max}\lesssim_\P
	\delta_{n}^{-1}+h_{n,\alpha}^{m}+h_{n,\varrho}^{2m}.
	\end{align*}
	Moreover, $1\lesssim_\P \min_{1\leq i\leq n}s_{\min}(\bR_{\subi})\leq \max_{1\leq i\leq n}s_{\max}(\bR_{\subi})\lesssim_\P 1$.
\end{thm}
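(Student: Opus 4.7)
The plan is to adapt the two-step argument used for Theorems~\ref{thm: IMD} and \ref{thm: uniform convergence of factors} in the main paper to the covariate-augmented setting, using Lemma~\ref{SA-lem: slope of covariates} to absorb the perturbation induced by $\widehat{\bvth}-\bvth$. I would organize the proof as (i) an indirect-matching step that controls $\|\balpha_i-\balpha_{j_k(i)}\|$, (ii) a decomposition of the LPCA input around the noiseless factor structure, and (iii) a sin-$\Theta$/Wedin-type perturbation step that produces the entrywise bounds on $\widehat{\bF}_{\subi}$, $\widehat{\bLambda}_{\subi}$ and on the rotation $\bR_{\subi}$.

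For step (i), the third line of Assumption~\ref{SA-Assumption: indirect matching} links $\rho(\tilde\bX_{\cdot i}^{\ddagger},\tilde\bX_{\cdot j}^{\ddagger})$ to $\rho(\bH_{\cdot i}^{\ddagger},\bH_{\cdot j}^{\ddagger})$ up to an additive $a_n$, and the Hölder lower bound $\rho(\bH_{\cdot i}^{\ddagger},\bH_{\cdot j}^{\ddagger})\gtrsim_\P\|\balpha_i-\balpha_j\|^{\lvarsig}$ from the same assumption then translates matching proximity in $\tilde\bX$ into proximity of the $\balpha$'s. A standard covering argument, using that $\balpha_i$ has a density bounded and bounded away from zero on a compact set of dimension $r$, gives the $K$th nearest-neighbor rate $(K/n)^{1/r}$ in $\balpha$; this is inflated to $(K/n)^{\uvarsig/r}$ after passing through the upper Hölder exponent $\uvarsig$ when bounding $\rho(\bH^\ddagger)$ from above on a short-distance set, and then deflated by $\lvarsig$ to return to the $\balpha$-level. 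Combining with the matching slack $a_n^{1/\lvarsig}$ yields the first display of the theorem.

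For step (ii), I would write
\begin{equation*}
\tilde\bX_{\subi}^{\wr}=\bF_{\subi}\bLambda_{\subi}'+\bXi_{\subi}+\bU_{\subi}^{\wr}+\bW_{\subi}^{\wr}(\bvth-\widehat{\bvth}),
\end{equation*}
using the $L^2$-decomposition of $\bH_{\subi}$ in Assumption~\ref{SA-Assumption: local approximation}. The approximation term satisfies $\|\bXi_{\subi}\|_{\max}\lesssim_\P h_{n,\alpha}^m$ by part (b) of the assumption with the sharper $\balpha$-neighborhood radius from step (i); the noise term $\bU_{\subi}^{\wr}$ is controlled by a matrix Bernstein argument using Assumption~\ref{SA-Assumption: regularities}(c), with $(np)^{2/\nu}\delta_n^{-2}\lesssim 1$ absorbing the heavy-tail correction and yielding the $\delta_n^{-1}$ rate; and the covariate-induced term is controlled by splitting each $\bW_{\subi,\lceil\ell\rfloor}^{\wr}=\bar\bH_{\subi,\lceil\ell\rfloor}^{\wr}+\bE_{\subi,\lceil\ell\rfloor}^{\wr}$ per Assumption~\ref{SA-Assumption: high-rank covariates}, bounding the factor piece via its own decomposition in Assumption~\ref{SA-Assumption: local approximation} applied to $\bar\bH_{\subi,\lceil\ell\rfloor}$ and the residual via another Bernstein-type bound, then multiplying by $\|\widehat{\bvth}-\bvth\|\lesssim_\P\delta_n^{-1}+h_{n,\varrho}^{2m}$ from Lemma~\ref{SA-lem: slope of covariates}. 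The three pieces combine to a total perturbation of order $\delta_n^{-1}+h_{n,\alpha}^m+h_{n,\varrho}^{2m}$ in the relevant norms.

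For step (iii), set $\bR_{\subi}=(\bLambda_{\subi}'\bLambda_{\subi})^{-1}\bLambda_{\subi}'\widehat{\bLambda}_{\subi}$ and apply an entrywise Wedin-type identity on the top-$\ttd_i^{\bH_{\subi}}$ singular subspace of $\tilde\bX_{\subi}^{\wr}$. The singular-value separation guaranteed by Assumption~\ref{SA-Assumption: local approximation}(a)(b), in particular $\delta_n^{-1}/\upsilon^{\bH_{\subi}}_{\ttd_i^{\bH_{\subi}},\subi}=o(1)$, makes the top subspace stable uniformly in $i$. The loading bound follows directly; the factor bound picks up the $(\upsilon^{\bH_{\subi}}_{k,\subi})^{-1}$ inflation because the factor scores are recovered by dividing by the corresponding singular value. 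Plugging the loading bound into $\widehat{\bLambda}_{\subi}'\widehat{\bLambda}_{\subi}/K=\bR_{\subi}'(\bLambda_{\subi}'\bLambda_{\subi}/K)\bR_{\subi}+o_\P(1)$ and invoking Assumption~\ref{SA-Assumption: local approximation}(a) yields $s_{\min}(\bR_{\subi})\asymp_\P s_{\max}(\bR_{\subi})\asymp_\P 1$ uniformly in $i$. The main obstacle is the covariate-induced perturbation in step (ii): certifying uniformly controlled operator norm of $\bW_{\subi}^{\wr}$ across all $n$ neighborhoods, despite its low-rank component depending on a different set of latent variables $\{\bvrho_{i,\ell}\}$ than $\balpha$, is what forces the parallel LPCA-type decomposition for each $\bar\bH_{\subi,\lceil\ell\rfloor}$, and is also where the condition $\delta_n h_{n,\varrho}^{2m}\lesssim 1$ becomes essential, ensuring that the $h_{n,\varrho}^{2m}$ contribution from $\widehat{\bvth}$ does not dominate the principal $\delta_n^{-1}$ noise rate.
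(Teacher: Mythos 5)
Your overall architecture (indirect matching, decomposition of the data matrix, eigenvector perturbation) is the right one, and your reading of the matching step and of why the rate condition $\delta_n h_{n,\varrho}^{2m}\lesssim 1$ matters is on target, but there is one genuine gap and a couple of places where you are working harder than the paper does.

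The gap is in step (iii). You propose setting $\bR_{\subi}=(\bLambda_{\subi}'\bLambda_{\subi})^{-1}\bLambda_{\subi}'\widehat{\bLambda}_{\subi}$ and invoking a sin-$\Theta$/Wedin-type bound on the leading singular subspace. Wedin and Davis--Kahan give operator-norm (or Frobenius-norm) bounds on subspace angles; they do not by themselves yield the entrywise $\|\cdot\|_{\max}$ bounds that the theorem requires, and they also do not by themselves separate the differently scaled singular directions indexed by $\upsilon_{1,\subi}\gg\upsilon_{2,\subi}\gg\cdots$ that produce the $k$-dependent $(\upsilon^{\bH_{\subi}}_{k,\subi})^{-1}$ inflation. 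The paper instead reuses the explicit Bai--Ng-style algebraic expansion of the eigenvector estimate (the $J_1,\ldots,J_8$ decomposition behind Lemma \ref{lem: consistency eigenstructure} and Lemma \ref{lem: eigenstructure 1st order}, with the sequential block-by-block projection $\bM_{\widehat{\bF}_{\gr{0},\subi}}, \bM_{\widehat{\bF}_{\cdot\bar{\mathcal{C}}_{i1},\subi}},\ldots$), and the rotation is defined through $\widehat{\bOmega}_{\subi}^{-1}$ rather than through a least-squares projection of $\widehat{\bLambda}$ onto $\bLambda$. If you want to keep a spectral-perturbation framing you would need a leave-one-out or incoherence argument to convert operator-norm control to entrywise control, plus the multi-scale elimination scheme; simply calling Wedin does not close this.

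Two further remarks. First, the paper's actual decomposition is written at the level of the loading estimator,
\begin{equation*}
\widehat{\bLambda}_{\subi}-\bLambda_{\subi}\bR_{\subi}=
\frac{1}{p^\wr}\bXi_{\subi}'\widehat{\bF}_{\subi}+
\frac{1}{p^\wr}\bU_{\subi}'\widehat{\bF}_{\subi}+
\frac{1}{p^\wr}(\bvth-\widehat{\bvth})'\tilde{\bW}_{\subi}'\widehat{\bF}_{\subi},
\end{equation*}
and the covariate term is bounded simply as $O_\P(\|\bvth-\widehat{\bvth}\|)$ because $\frac{1}{p^\wr}\tilde{\bW}_{\subi}'\widehat{\bF}_{\subi}$ is $O_\P(1)$; your further splitting of $\bW_{\subi}^{\wr}$ into $\bar\bH_{\subi,\lceil\ell\rfloor}^{\wr}+\bE_{\subi,\lceil\ell\rfloor}^{\wr}$ is not wrong but is unnecessary, since you are multiplying by the already-small $\widehat{\bvth}-\bvth$ from Lemma \ref{SA-lem: slope of covariates}. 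Second, the first display of the theorem is in the paper obtained simply by plugging Assumption \ref{SA-Assumption: indirect matching} into Theorem \ref{thm: IMD}; your inlined covering-plus-H\"older sketch reproduces that argument and is fine, just redundant relative to citing the existing result.
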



\section{Further Discussion of Assumptions}
\label{SA-sec: discussion of conditions}

\subsection*{Informativeness}
As discussed in the main paper, Assumption \ref{Assumption: Matching}(b) is key for validity of $K$ nearest neighbors matching. 
Appendix \ref{sec: appendix matching} shows how it can be verified based on ``informativeness'' conditions that are specific to each distance function. We give some further discussion in this section.

We will first focus on condition (iii) in Theorem \ref{thm: IMD hom}, the key informativeness condition for Euclidean distance.
We remind readers that this condition is stated in a probabilistic sense and the randomness comes from both dimensions. 
However, if the maxima over the evaluation points are replaced by suprema over the whole support, then it no longer relies on the ``cross-sectional'' (indexed by $i$) randomness and concerns the properties of the latent functions only. 
Note that the randomness of the latent functions $(\eta_l:1\leq l\leq p)$ can be understood in several ways. For example, they could be a sample of functional random variables (f.r.v.) satisfying certain smoothness conditions. See \cite{Ferraty-Vieu_2006_book} for general discussion of functional data analysis. Alternatively, it may be generated based on a more restrictive but practically relevant specification:
$\eta_l(\balpha_i):=\eta(\bm{\varpi}_l;\balpha_i)$,
where $\eta(\cdot;\cdot)$ is a fixed function and the randomness across the $l$ dimension is induced by a sequence of random variables $(\bm{\varpi}_l:1\leq l\leq p)$ with some distribution $F_{\varpi}(\cdot)$ on $\mathcal{Z}$.
To simplify the discussion, attention will be mostly restricted to this specific case in the following discussion.

Recall that condition (iii) in Theorem \ref{thm: IMD hom} says that 
\begin{equation}\label{eq: def of nonsingularity}
	\lim_{\Delta\rightarrow 0}\limsup_{n,p^\dagger\rightarrow\infty}\;
	\P\Big\{\max_{1\leq i\leq n}\;
	\max_{j:\rho(\bH_{\cdot i}^\dagger,\bH_{\cdot j}^\dagger)<\Delta}\;
	\|\balpha_i-\balpha_j\|>\varepsilon\Big\}=0.
\end{equation}

\eqref{eq: def of nonsingularity} can be viewed as the continuity of the inverse map of $\bH_{\cdot i}^\dagger$. 
To see this, suppose that $\eta_l(\balpha_i)=\eta(\bm{\varpi}_l;\balpha_i)$, which is a function of $\bm\varpi_l$ indexed by $\balpha_i\in\mathcal{A}$. Under mild regularity conditions, the following uniform convergence can be shown:
\[
\sup_{\balpha,\balpha'\in\mathcal{A}}\Big|
\frac{1}{p^\dagger}\sum_{l\in\mathcal{R}^\dagger}\Big(\eta(\bm{\varpi}_l;\balpha)-\eta(\bm{\varpi}_l;\balpha')\Big)^2-
\E\Big[\Big(\eta(\bm{\varpi}_l;\balpha)-\eta(\bm{\varpi}_l;\balpha')\Big)^2\Big]\Big|=o_\P(1),
\]
where the expectation is taken against the distribution of $\bm{\varpi}_l$.
Then, we can safely remove the randomness arising from $\bm{\varpi}_l$ and impose conditions on the ``limit'' only. Specifically, condition \eqref{eq: def of nonsingularity} reduces to the following: for any $\varepsilon>0$, there exists $\Delta$ such that for all $\balpha,\balpha'\in\mathcal{A}$, 
\[
\int \Big(\eta(\bm{\varpi}_l;\balpha)-
\eta(\bm{\varpi}_l;\balpha')\Big)^2dF_{\varpi}(\bm{\varpi})< \Delta
\Rightarrow\|\balpha-\balpha'\|< \varepsilon,
\]
where $F_\varpi$ denotes the distribution of $\bm{\varpi}_l$ and  ``$\Rightarrow$'' means ``implies''.
Conceptually, if we define $g: \balpha\mapsto\eta(\cdot;\balpha)$ which is a map from $\mathcal{A}$ to $L_2(\mathcal{Z})$ (a set of square-integrable functions on the support $\mathcal{Z}$ of $\bm{\varpi}_l$ equipped with $L^2$-norm), then the above condition essentially says that the inverse map $g^{-1}$ exists and uniformly continuous.

\begin{exmp}[Polynomial]
	Let $\eta(\varpi_l;\alpha)=1+\varpi\alpha+\varpi^2\alpha^2$,  $\mathcal{Z}=\mathcal{A}=[0,1]$, and $F_\varpi$ be the uniform distribution. Then, for $\alpha,\alpha'\in\mathcal{A}$,
	$\int_{[0,1]}(\eta(\varpi;\alpha)-
	\eta(\varpi;\alpha'))^2dF_{\varpi}(\varpi)=
	(\alpha-\alpha',\alpha^2-(\alpha')^2)
	(\int_{[0,1]}(\varpi,\varpi^2)'(\varpi,\varpi^2)d\varpi)
	(\alpha-\alpha',\alpha^2-(\alpha')^2)'$.
	The matrix in the middle is simply one block in the Hilbert matrix with the minimum eigenvalue bounded away from zero. Then, the desired result immediately follows.
	\hfill\qedsymbol
\end{exmp}

\begin{exmp}[Trigonometric function]
	Let $\eta(\varpi;\alpha)=\sin(\pi(\varpi+\alpha))$, $\mathcal{Z}=\mathcal{A}=[0,1]$, and $F_\varpi$ be the uniform distribution. Take any two points $\alpha,\alpha'\in\mathcal{A}$ such that $\alpha\neq\alpha'$. Without loss of generality, assume $\alpha>\alpha'=0$ and $\varepsilon=\alpha-\alpha'$. Consider the following cases: when $0<\varepsilon<1/2$,  $|\eta(\varpi;\alpha)-\eta(\varpi;\alpha')|\geq
	1-\cos\pi\varepsilon$ for all $\varpi\in[0.5,1]$; and when $1/2\leq \varepsilon\leq 1$, $|\eta(\varpi;\alpha)-\eta(\varpi;\alpha')|\geq 
	\sin \pi\varpi$ for all $\varpi\in[0.5,1]$. This suffices to show that for each $\varepsilon>0$, there exists $\Delta_\epsilon>0$ such that for all $|\alpha-\alpha'|>\varepsilon$,  $\int_{[0,1]}(\eta(\varpi;\alpha)-\eta(\varpi;\alpha'))^2d\varpi>\Delta_\varepsilon$, which is simply the contrapositive of the desired result.
	\hfill\qedsymbol
\end{exmp}

\begin{remark}[Regime shift]
The ``smoothness'' of $\eta_l$ over the $l$ dimension is unnecessary in our general framework. For example, consider a simple artificial specification:
$\eta_l(\alpha)=\alpha^2+1$ for some $l=l_0\in\mathcal{R}^{\dagger}$ and $\eta_l(\alpha)=2\alpha^2+1$ for $l\neq l_0$.
Clearly, the case of $l=l_0$ is distinct from others. However, for any $\alpha,\tilde{\alpha}\in\mathcal{A}=[0,1]$ such that $|\alpha-\tilde{\alpha}|>\varepsilon$,
$\frac{1}{p^\dagger}\sum_{l\in\mathcal{R}^\dagger}
(\eta_{l}(\alpha)-\eta_{l}(\tilde{\alpha}))^2
= \frac{4(p^\dagger-1)+1}{p^\dagger}(\alpha^2-\tilde{\alpha}'^2)^2 \geq \varepsilon^4$.
Multiple regime shifts and more complex factor structures are allowed as long as there are sufficiently many ``dimensions'' in which the difference in values of $\alpha$ can be detected.
\end{remark}

Next, note that the condition described above for the squared Euclidean distance is also sufficient for the informativeness requirement corresponding to the pseudo-max distance (condition (i) in Theorem \ref{thm: IMD hsk}). To see this, suppose that 
$\sup_{\balpha,\balpha',\balpha''\in\mathcal{A}}
|\frac{1}{p^\dagger}\sum_{l\in\mathcal{R}^\dagger}(\eta(\bm{\varpi}_l;\balpha)-
\eta(\bm{\varpi}_l;\balpha'))\eta(\bm{\varpi}_l;\balpha'')-\E[(\eta(\bm{\varpi}_l;\balpha)-\eta(\bm{\varpi}_l;\balpha'))\eta(\bm{\varpi}_l;\balpha'')]|=o_\P(1)$, and note that
$$\sup_{\balpha''\in\mathcal{A}\atop \balpha''\neq \balpha,\balpha'}|\E[(\eta(\bm{\varpi}_l;
\balpha)-\eta(\bm{\varpi}_l;\balpha'))\eta(\bm{\varpi}_l;\balpha'')]|=o(1) \Rightarrow
\E[(\eta(\bm{\varpi}_l;\balpha)-\eta(\bm{\varpi}_l;\balpha'))^2]=o(1).$$ 
Combining this fact with the previous condition for the squared Euclidean distance, we can establish the informativeness requirement \eqref{eq: def of nonsingularity} with respect to the pseudo-max distance.

\subsection*{Non-collapsing}

We also imposed a technical condition, ``non-collapsing'', for the pseudo-max distance (condition (iii) in Theorem \ref{thm: IMD hsk}), which is only used to obtain a sharp bound on the indirectly matching discrepancy. We briefly discuss how to verify it in this section. 

Recall that ``non-collapsing'' requires that 
there exists an absolute constant $\underline{c}'>0$ such that
	\begin{equation}\label{eq: def of non-collapsing}
		\lim_{n,p^\dagger\rightarrow\infty}\;
		\P\Big\{
		\min_{1\leq \ell\leq r}
		\min_{1\leq i\leq n}
		\sup_{\balpha\in\mathcal{A}}
		\frac{1}{p^\dagger}\Big\|\mathscr{P}_{\balpha_i,\ell}[\bmeta(\balpha)]\Big\|^2\geq \underline{c}'\Big\}=1.
	\end{equation}
When gradient vector $\nabla\bmeta$ is not collinear, i.e., $\min_{1\leq i\leq n}s_{\min}(\nabla\bmeta(\balpha_i)
\nabla\bmeta(\balpha_i)')$ is bounded away from zero, the tangent space is well defined at every point. 
Then, letting all sample averages converge to limits,  condition \eqref{eq: def of non-collapsing} can be restated as
\[
\inf_{\balpha\in\mathcal{A}}\sup_{\tilde{\balpha}\in\mathcal{A}}\Big\|
\int\Big(\frac{\partial}{\partial \balpha}\eta(\bm{\varpi};\balpha)\Big)\eta(\bm{\varpi};\tilde{\balpha})dF_\varpi(\bm{\varpi})\Big\|\geq \underline{c}'>0.
\]
Now we consider the previous two examples.

\begin{exmp}[Polynomial, continued]
	In this scenario,
	\[
	\sup_{\tilde{\alpha}}\Big|\int_{[0,1]}\Big(\frac{d}{d\alpha}\eta(\varpi;\alpha)\Big)\eta(\varpi;\tilde{\alpha})d\varpi\Big|=
	\sup_{\tilde{\alpha}}\Big|\frac{1}{2}+\frac{\tilde{\alpha}}{3}+\frac{\tilde{\alpha}^2}{4}+\Big(\frac{2}{3}+\frac{\tilde{\alpha}}{2}+\frac{2\tilde{\alpha}^2}{5}\Big)\alpha\Big|\geq \frac{13}{12},
	\]
	where the last inequality holds by taking $\tilde{\alpha}=1$. Then, the result follows.
	\hfill\qedsymbol
\end{exmp}

\begin{exmp}[Trigonometric function, continued]
	In this scenario,
	\[
	\begin{split}
		\sup_{\tilde{\alpha}}\Big|\int_{[0,1]}\Big(\frac{d}{d\alpha}\eta(\varpi;\alpha)\Big)\eta(\varpi;\tilde{\alpha})d\varpi\Big|&=
		\sup_{\tilde{\alpha}}\Big|\int_{[0,1]}\pi\cos(\pi(\varpi+\alpha))\sin(\pi(\varpi+\tilde\alpha))d\varpi\Big|\\
		&=\pi\sup_{\tilde{\alpha}}\Big|\frac{\sin(\pi(\alpha-\tilde{\alpha}))}{2}\Big|
		\geq \pi/2,
	\end{split}
	\]
	by simply taking $|\tilde{\alpha}-\alpha|=0.5$.
	Then, the result follows.
	\hfill\qedsymbol
\end{exmp}

\subsection*{Non-degeneracy}

Theorem \ref{thm: verify LPCA} in the main paper verifies the high-level conditions in Assumption \ref{Assumption: LPCA}. The key requirement therein is the non-degeneracy of derivative vectors.
As discussed before, under mild conditions, we can make the sample average across $l$ converge and restate the condition in terms of the limit. For instance, suppose that $\mathscr{D}\bmeta_l$ contains the $0$th- and $1$st-order derivatives. Then the requirement becomes
\[
\int \Big(\mu_A(\bm{\varpi};\balpha),\,\frac{\partial}{\partial\balpha}\eta(\bm{\varpi};\balpha)\Big)\Big(\eta(\bm{\varpi};\balpha),\,\frac{\partial}{\partial\balpha}\eta(\bm{\varpi};\balpha)\Big)'dF_\varpi(\bm{\varpi})\geq c>0
\] 
uniformly over $\balpha$. We can verify this sufficient condition for the two examples discussed before.

\begin{exmp}[Polynomial, continued]
	The first-order derivative is $\frac{d}{d\alpha}\eta(\varpi;\alpha)=\varpi+2\varpi^2\alpha$. Let $\bg(\varpi;\alpha)=(1+\varpi\alpha+\varpi^2\alpha^2,\, \varpi+2\varpi^2\alpha)'$. Then,
	\begin{align*}
		\int_{[0,1]}\bg(\varpi;\alpha)\bg(\varpi;\alpha)'d\varpi&=
		\tilde{\bg}(\alpha)\Big(\int_{[0,1]}(1,\varpi,\varpi^2)'(1,\varpi,\varpi^2)d\varpi\Big)
		\tilde{\bg}(\alpha)'\\
		&\gtrsim\tilde{\bg}(\alpha)\tilde{\bg}'(\alpha),\qquad\text{for}\quad
		\tilde{\bg}(\alpha)=\bigg(\begin{array}{ccc}
			1&\alpha&\alpha^2\\
			0&1&2\alpha
		\end{array}
		\bigg).
	\end{align*}
	$\tilde{\bg}(\alpha)\tilde{\bg}(\alpha)'$ is positive definite for any $\alpha$. The coefficients of its characteristic functions are continuous on a compact support, and thus by Theorem 3.9.1 of \cite{Tyrtyshnikov_2012_book}, their eigenvalues are also continuous on the support. Then, the desired result follows.
	\hfill\qedsymbol
\end{exmp}

\begin{exmp}[Trigonometric function, continued]
	Note that $\frac{d}{d\alpha}\eta(\varpi;\alpha)=\pi\cos(\pi(\varpi+\alpha))$. Let $\bg(\varpi;\alpha)=(\sin(\pi(\varpi+\alpha)), \pi\cos(\pi(\varpi+\alpha)))'$. Then,
	\[
	\begin{split}
		&\int_{[0,1]}\bg(\varpi;\alpha)\bg(\varpi;\alpha)'d\varpi=
		\tilde{\bc}\Big(\int_{[0,1]}\tilde{\bg}(\varpi;\alpha)\tilde{\bg}(\varpi;\alpha)'d\varpi\Big)\tilde{\bc}
		=\frac{1}{2\pi}\tilde{\bc}\bigg(\begin{array}{cc}
			\pi&0\\
			0&\pi
		\end{array}\bigg)\tilde{\bc},	
		\quad\text{where}\\
		&\tilde{\bg}(\varpi;\alpha)=(\sin(\pi(\varpi+\alpha)), \cos(\pi(\varpi+\alpha)))', \quad\tilde{\bc}=\bigg(\begin{array}{cc}
			1&0\\
			0&\pi
		\end{array}
		\bigg).
	\end{split}
	\]
	Then, the desired result follows.
	\hfill\qedsymbol
\end{exmp}


\section{Proofs}\label{SA-sec: proofs}
\subsection{Proofs for Appendix \ref{sec: appendix matching}}
\subsubsection{Proof of Theorem \ref{thm: IMD hom}}
	For any pair $(i,j)$,
	$\rho(\bX^{\dagger}_{\cdot i}, \bX^{\dagger}_{\cdot j})
	=\frac{1}{p^\dagger}\|\bX^{\dagger}_{\cdot i}-\bX^{\dagger}_{\cdot j}\|^2
	=\frac{1}{p^\dagger}\|\bH^{\dagger}_{\cdot i}-\bH^{\dagger}_{\cdot j}\|^2+
	\frac{1}{p^\dagger}\|\bU^\dagger_{\cdot i}-\bU^\dagger_{\cdot j}\|^2
	+\frac{2}{p^\dagger}(\bH^{\dagger}_{\cdot i}-\bH^{\dagger}_{\cdot j})' (\bU^\dagger_{\cdot i}-\bU^\dagger_{\cdot j})$.
	For the second term,
	\begin{align*}
		\frac{1}{p^\dagger}\|\bU^\dagger_{\cdot i}-\bU^\dagger_{\cdot j}\|^2=2\sigma^2+\frac{1}{p^\dagger}\sum_{l\in\mathcal{R}^\dagger}(u_{il}^2-\sigma^2)
		+\frac{1}{p^\dagger}\sum_{l\in\mathcal{R}^\dagger}(u_{jl}^2-\sigma^2)-\frac{2}{p^\dagger}\sum_{l\in\mathcal{R}^\dagger}u_{il}u_{jl}.
	\end{align*}
	Let $\zeta_{il}:=u_{il}^2-\sigma^2$ and write $\zeta_{il}=\zeta_{il}^-+\zeta_{il}^+$ where $\zeta_{il}^-:=\zeta_{il}\I(\zeta_{il}\leq \tau_n)-\E[\zeta_{il}\I(\zeta_{il}\leq \tau_n)|\mathscr{F}]$ and $\zeta^+_{it}:=\zeta_{il}\I(\zeta_{il}>\tau_n)-\E[\zeta_{il}\I(\zeta_{il}>\tau_n)|\mathscr{F}]$ for $\tau_n=\sqrt{p/\log n}$.
	Then, for any $\delta>0$, by Bernstein's inequality,
	\begin{align*}
		\P\Big(\Big|\frac{1}{p^\dagger}\sum_{l\in\mathcal{R}^\dagger}\zeta_{il}^-\Big|>\delta
		\Big|\mathscr{F}\Big)\leq
		2\exp\bigg(-\frac{\delta^2/2}{\frac{1}{(p^\dagger)^2}\sum_{l\in\mathcal{R}^\dagger}\E[(\zeta_{il}^-)^2|\mathscr{F}]
			+\frac{1}{3p^\dagger}\tau_n\delta}\bigg).
	\end{align*}
	On the other hand,
	\[
	\P\Big(\max_{1\leq i\leq n}
	\Big|\frac{1}{p^\dagger}\sum_{l\in\mathcal{R}^\dagger}\zeta_{il}^+\Big|>\delta\Big|\mathscr{F}\Big)
	\leq n\max_{1\leq i\leq n}\frac{\sum_{l\in\mathcal{R}^\dagger}\E[(\zeta_{il}^+)^2|\mathscr{F}]}
	{(p^\dagger)^2\delta^2}
	\lesssim \frac{n}{p^\dagger\tau_n^\nu\delta^2}.
	\]
	Then, by the imposed rate restriction, $p^\dagger \asymp p$ and setting $\delta=\sqrt{\log n/p}$, it follows that
	$\max_{1\leq i\leq n}\Big|\frac{1}{p^\dagger}\sum_{l\in\mathcal{R}^\dagger}(u_{il}^2-\sigma^2)\Big|\lesssim_\P \sqrt{\frac{\log n}{p}}$.

	Apply this argument to other terms. Specifically,
	for any $i\neq j$, since $\E[u_{il}u_{jl}|\mathscr{F}]=0$, by the rate restriction,
	$\max_{i\neq j}
	\Big|\frac{1}{p^\dagger}\sum_{l\in\mathcal{R}^\dagger}u_{il}u_{jl}\Big|\lesssim_\P\sqrt{\frac{\log n}{p}}$.
	Then, 
	$\max_{i\neq j}\Big|\frac{1}{p^\dagger}\|\bU^\dagger_{\cdot i}-\bU^\dagger_{\cdot j}\|^2-2\sigma^2\Big|\lesssim_\P\sqrt{\frac{\log n}{p}}$.
	
	Similarly, by the moment condition, the rate restriction and the fact that $\E[(\eta_{l}(\balpha_i)-\eta_{l}(\balpha_j))(u_{il}-u_{jl})]=0$ for all $l$ and $i\neq j$,
	$$\max_{i\neq j}\Big|\frac{1}{p}(\bH^{\dagger}_{\cdot i}-\bH^{\dagger}_{\cdot j})'(\bU^\dagger_{\cdot i}-\bU^\dagger_{\cdot j})\Big|\lesssim_\P\Big(\frac{\log n}{p}\Big)^{1/2}.$$
	This suffices to show $a_n=(\log n/p)^{1/2}$, $\rho_0=2\sigma^2$. 
	
	Next, it follows immediately from Assumption \ref{Assumption: Regularities} that $\frac{1}{p}\|\bH^\dagger_{\cdot i}-\bH^\dagger_{\cdot j}\|^2\lesssim \|\balpha_i-\balpha_j\|^2$ for all $(i,j)$, which implies $\uvarsig=2$. On the other hand,
	by the condition imposed, 
	for any $\epsilon>0$ and $\tau>0$, there exists some $\Delta_\epsilon>0$ and $M_\epsilon>0$ such that $\P(\max_{1\leq i\leq n}\max_{\rho(\bH^\dagger_{\cdot i},\bH^\dagger_{\cdot j})<\Delta_\epsilon}\|\balpha_i-\balpha_j\|>\epsilon)<\tau/2$ for all $n,p>M_\epsilon$. 
	Then, we consider a local linearization around $\balpha_i$.
	By the condition imposed, w.p.a. 1, 
	\begin{align*}
		\|\balpha_i-\balpha_j\|^2
		&\lesssim \frac{1}{p^\dagger}\Big\|\nabla\bmeta^\dagger(\balpha_i)'(\balpha_i-\balpha_{j})+
		\frac{1}{2}(\balpha_i-\balpha_{j})'\nabla^2\bmeta(\tilde{\balpha})'(\balpha_i-\balpha_{j})\Big\|^2\\
		&=\frac{1}{p^\dagger}\|\bH^{\dagger}_{\cdot i}-\bH^{\dagger}_{\cdot j}\|^2,
	\end{align*}
	where $\nabla^2\bmeta^\dagger(\tilde{\balpha})$ is a $p^\dagger\times r\times r$ array with the $l$th sheet given by the Hessian matrix of $\eta_{l}(\cdot)$ with respect to $\balpha$ evaluated at some appropriate point $\tilde{\balpha}_{i,l}$ between $\balpha_i$ and $\balpha_{j}$, and $\tilde{\balpha}=(\tilde{\balpha}_{i,1}', \cdots, \tilde{\balpha}_{i,p}')'$.   
	Then, it follows that $\lvarsig=2$.

\subsubsection{Proof of Theorem \ref{thm: IMD hsk}}

	For any $j\neq i$ and $k\notin\{j,i\}$,
	\[
	\frac{1}{p^\dagger}(\bX^{\dagger}_{\cdot j}-\bX^{\dagger}_{\cdot i})'\bX^{\dagger}_{\cdot k}
	= \frac{1}{p^\dagger}(\bH^{\dagger}_{\cdot j}-\bH^{\dagger}_{\cdot i})'\bH^{\dagger}_{\cdot k}+
	\frac{1}{p^\dagger}(\bH^{\dagger}_{\cdot j}-\bH^{\dagger}_{\cdot i})'\bU^\dagger_{\cdot k}+
	\frac{1}{p^\dagger}(\bU^\dagger_{\cdot j}-\bU^\dagger_{\cdot i})'\bH^{\dagger}_{\cdot k}+
	\frac{1}{p^\dagger}(\bU^\dagger_{\cdot j}-\bU^\dagger_{\cdot i})'\bU^\dagger_{\cdot k}.
	\]
	We apply the truncation argument again as in the proof of Theorem \ref{thm: IMD hom}.
	Let $\zeta_{ikl}:=u_{il}u_{kl}$ and write $\zeta_{ikl}=\zeta_{ikl}^-+\zeta_{ikl}^+$ where 
	$\zeta_{ikl}^-:=\zeta_{ikl}\I(\zeta_{ikl}\leq \tau_n)-\E[\zeta_{ikl}\I(\zeta_{ikl}\leq \tau_n)|\mathscr{F}]$ and $\zeta^+_{ikl}:=\zeta_{ikl}\I(\zeta_{ikl}>\tau_n)-\E[\zeta_{ikl}\I(\zeta_{ikl}>\tau_n)|\mathscr{F}]$ for $\tau_n=\sqrt{p/\log n}$.
	Then, for any $\delta>0$, by Bernstein's inequality,
	\begin{align*}
		\P\Big(\Big|\frac{1}{p^\dagger}\sum_{l\in\mathcal{R}^\dagger}\zeta_{ikl}^-\Big|>\delta
		\Big|\mathscr{F}\Big)\leq
		2\exp\bigg(-\frac{\delta^2/2}{\frac{1}{(p^\dagger)^2}\sum_{l\in\mathcal{R}^\dagger}
			\E[(\zeta_{ikl}^-)^2|\mathscr{F}]
			+\frac{1}{3p^\dagger}\tau_n\delta}\bigg).
	\end{align*}
	On the other hand,
	\[
	\P\Big(\max_{1\leq i, k\leq n\atop i\neq k}
	\Big|\frac{1}{p^\dagger}\sum_{l\in\mathcal{R}^\dagger}\zeta_{ikl}^+\Big|>\delta\Big|\mathscr{F}\Big)
	\leq n^2\max_{1\leq i,j\leq n}\frac{\sum_{l\in\mathcal{R}^\dagger}\E[(\zeta_{ikl}^+)^2|\mathscr{F}]}
	{(p^\dagger)^2\delta^2}
	\lesssim \frac{n^2}{p^\dagger\tau_n^\nu\delta^2}.
	\]
	Then, by the rate restriction and setting $\delta=\sqrt{\log n/p}$, 
	$\max_{i\neq j}|\frac{1}{p^\dagger}\sum_{l\in\mathcal{R}^\dagger}u_{il}u_{jl}|\lesssim_\P \sqrt{\log n/p}$.
	Note that in this case the moment condition required is weaker than in Theorem \ref{thm: IMD hom}.
	By similar argument, we conclude that the last three terms in the decomposition are $\bar{O}_\P(\sqrt{\log n/p})$ uniformly over $i,j,k$, which suffices to show $\rho_0=0$ and $a_n=\sqrt{\log n/p}$.
	
	Next, for all $k\neq i,j$, by Cauchy-Schwartz inequality,
	\[
	\begin{split}
		\Big|\frac{1}{p}(\bH^{\dagger}_{\cdot j}-\bH^{\dagger}_{\cdot i})'\bH^{\dagger}_{\cdot k}\Big|
		\leq\Big(\frac{1}{p}\sum_{l\in\mathcal{R}^\dagger}\Big(\eta_{l}(\balpha_i)-\eta_{l}(\balpha_{j})\Big)^2\Big)^{1/2}
		\Big(\frac{1}{p}\sum_{l\in\mathcal{R}^\dagger}\eta_{l}(\balpha_k)^2\Big)^{1/2}
		\lesssim \|\balpha_j-\balpha_i\|.
	\end{split}
	\]
	This implies that $\uvarsig=1$.

	On the other hand, denote by $\mathcal{M}_n$ the event on which conditions (ii) and (iii) hold. Then, $\P(\mathcal{M}_n)=1-o(1)$. 
	On $\mathcal{M}_n$,
	for each $\balpha_i$, given the direction $\balpha_i-\balpha_{j}$, pick a point $\bH^{\dagger}_{\cdot\ell}=\bmeta^\dagger(\balpha_{\ell})$ such that  $\mathscr{P}_{\balpha_i}[\bmeta^\dagger(\balpha_{\ell})]=\nabla\bmeta^\dagger(\balpha_i)(r_n(\balpha_i-\balpha_{j}))$ for some $r_n$ and 
	$\frac{1}{p^\dagger}\|\mathscr{P}_{\balpha_i}[\bmeta^\dagger(\balpha_{\ell})]\|^2\geq \underline{c}>0$. It follows that 
	$\|r_n(\balpha_i-\balpha_{j})\|\geq \underline{c}'>0$ for some constant $\underline{c}'>0$. 
	Suppose first that $\ell\neq i, j_k(i)$. Then, we conclude that on $\mathcal{M}_n$,
	\[
	\underline{c}'\|\balpha_i-\balpha_{j}\|\leq
	|r_n|\|\balpha_i-\balpha_{j}\|^2
	\lesssim\, |r_n|(\balpha_i-\balpha_{j})'\Big(\frac{1}{p^\dagger}\nabla\bmeta^\dagger(\balpha_i)'\nabla\bmeta^\dagger(\balpha_i)\Big)
	(\balpha_i-\balpha_{j}).
	\]
	These inequalities hold uniformly over $i$ and $j$. 
	Note that by the second-order expansion, for some constant $\bar{c}>0$,
	\[
	\Big|\frac{1}{p^\dagger}(\bH^{\dagger}_{\cdot i}-\bH^{\dagger}_{\cdot j})'\bH^{\dagger}_{\cdot \ell}\Big|\geq|r_n|(\balpha_i-\balpha_{j})'\Big(\frac{1}{p^\dagger}\nabla\bmeta^\dagger(\balpha_i)'\nabla\bmeta^\dagger(\balpha_i)\Big)
	(\balpha_i-\balpha_{j})-\bar{c}\|\balpha_i-\balpha_{j}\|^2.
	\]
	Moreover, for each $i$ and $j$, if $\ell=i$ or $j$, then take its nearest neighbor $\ell'$ other than $i$ and $j$. Without loss of generality, consider $\ell=i$. The last step would become
	\begin{align*}
	\rho(\bH_{\cdot i}^\dagger, \bH_{\cdot j}^\dagger)+o_\P(1)\|\balpha_j-\balpha_i\|
	&\geq\Big|\frac{1}{p^\dagger}(\bH^{\dagger}_{\cdot i}-\bH^{\dagger}_{\cdot j})'\bH^{\dagger}_{\cdot \ell'}\Big|+
	\Big|\frac{1}{p^\dagger}(\bH^{\dagger}_{\cdot i}-\bH^{\dagger}_{\cdot j})'(\bH^\dagger_{\cdot i}-\bH^{\dagger}_{\cdot \ell'})
	\Big|\\
	&\geq\Big|\frac{1}{p^\dagger}(\bH^{\dagger}_{\cdot i}-\bH^{\dagger}_{\cdot j})'\bH^{\dagger}_{\cdot i}\Big|\gtrsim\|\balpha_j-\balpha_i\|.
	\end{align*}
	Then, we conclude $\lvarsig=1$.

\subsubsection{Proof of Theorem \ref{thm: dist of average}}
	Note that
	\[
	\rho(\bX^\dagger_{\cdot i}, \bX^\dagger_{\cdot j})=
	\Big|\frac{1}{p^\dagger}\sum_{l\in\mathcal{R}^\dagger}
	\Big(\eta_l(\balpha_i)+u_{il}-\eta_l(\balpha_j)-u_{jl}\Big)\Big|.
	\]
	Applying the truncation argument given before and using the fact that $\E[u_{il}|\mathscr{F}]=0$ for all $i$ and $l$, we have
	\[
	\max_{1\leq i, j\leq n}\Big|\frac{1}{p^\dagger}\sum_{l\in\mathcal{R}^\dagger}(u_{il}-u_{jl})\Big|\lesssim_\P \Big(\frac{\log n}{p}\Big)^{1/2},
	\]
	which suffices to show $\rho_0=0$ and $a_n=\sqrt{\log n/p}$.
	
	Next, since $|\frac{1}{p^\dagger}\sum_{l\in\mathcal{R}^\dagger}(\eta_l(\balpha_j)-\eta_l(\balpha_i))|\lesssim\|\balpha_j-\balpha_i\|$, we have $\uvarsig=1$. On the other hand, the inverse of $\bar{\eta}(\cdot)$ has the first derivative bounded on $\mathcal{A}$ by the condition imposed.
	Thus, $\lvarsig=1$ as well.

\subsubsection*{Proof of Theorem \ref{thm: verify LPCA}}
\begin{proof}
	We first show part (a) of Assumption \ref{Assumption: LPCA} holds.
	By Lemma \ref{lem: direct matching}, there exists a constant $\underline{c}>0$ such that w.p.a.1, $\min_{1\leq i\leq n}\max_{1\leq k\leq K}\|\balpha_i-\balpha_{j_k(i)}\|\geq \underline{c}(K/n)^{1/r}$. Also, $\max_{1\leq i\leq n}\max_{1\leq k\leq K}\|\balpha_i-\balpha_{j_k(i)}\|\lesssim_\P (K/n)^{1/r}$ by Theorem \ref{thm: IMD} and the rate condition.
	Let $\blambda(\balpha)$ be the $\ttd_i$ monomial basis of degree no greater than $m-1$ (including the constant) centered at $\balpha_i$. Then, the condition $\|\bLambda_{\subi}\bUpsilon_{\subi}^{-1}\|_{\max}\lesssim_\P 1$ immediately follows. Since $\ttd_i$ is fixed, the upper bound on the maximum singular value also follows. Regarding the lower bound on the minimum singular value, by Lemma \ref{lem: direct matching}, Theorem \ref{thm: IMD} and the conditions imposed, $\min_{1\leq i\leq n}\widehat{R}_i\geq \underline{c}'(K/n)^{1/r}$ for some absolute constant $\underline{c}'>0$ w.p.a.1, where $\widehat{R}_i$ is the radius defined in Lemma \ref{lem: conditional independence of KNN}. Then, there exists $h=\underline{c}''(K/n)^{1/r}$ for some small enough absolute constant $\underline{c}''>0$ such that w.p.a.1,   $\rho(\bH_{\cdot i}^{\dagger},\bH_{\cdot j}^{\dagger})\leq \widehat{R}_i$ for all $\|\balpha_i-\balpha_j\|\leq h$ and $i\in[n]$. On this event,
	$s_{\min}(K^{-1}\sum_{k=1}^{K}\bUpsilon_{\subi}^{-1}\blambda(\balpha_{j_k(i)})\blambda(\balpha_{j_k(i)})'\bUpsilon^{-1})\geq s_{\min}(K^{-1}\sum_{j=1}^{n}\bUpsilon_{\subi}^{-1}\blambda(\balpha_j)\blambda(\balpha_j)'\bUpsilon_{\subi}^{-1}\I(\|\balpha_i-\balpha_j\|\leq h))$.
	Since  $s_{\min}(\frac{1}{n}\sum_{j=1}^{n}\bUpsilon_{\subi}^{-1}\blambda(\balpha_j)\blambda(\balpha_j)'\\
	\bUpsilon_{\subi}^{-1}\I(\|\balpha_i-\balpha_j\|\leq h))\gtrsim h^r$ w.p.a. 1, the desired lower bound follows.

	Next, we consider part (c). Conditional on $\widehat{R}_i$, $\{\blambda(\balpha_{j_k(i)}):1\leq k\leq K\}$ is a bounded independent sequence. Then, by Bernstein inequality, we have
	$\max_{i\in[n]}\Big|\frac{1}{K}\bUpsilon_{\subi}^{-1}\bLambda_{\subi}'\bLambda_{\subi}\bUpsilon_{\subi}^{-1}
	- \E^\ddagger[\bUpsilon_{\subi}^{-1}\blambda(\balpha_{j_k(i)})\blambda(\balpha_{j_k(i)})'\bUpsilon_{\subi}^{-1}]\Big|=o_\P(1)$.
	By Assumption \ref{Assumption: Regularities} and Taylor expansion,
	$$\max_{l\in\mathcal{R}^\ddagger}\max_{i\in [n]}\max_{j\in\mathcal{N}_i}
	\Big|\eta_{l}(\balpha_j)-\blambda(\balpha_j)'\tilde{\bbeta}_{l,\subi}\Big|\lesssim_\P h_{n}^m,$$
	where $\tilde{\bbeta}_{l,\subi}$ is a vector of derivatives in the Taylor expansion.
	Let $\tilde{r}_{l,\subi}(\balpha_j):=\eta_{l}(\balpha_j)-
	\blambda(\balpha_j)'\tilde{\bbeta}_{l,\subi}$, and let
	$\bbeta_{l,\subi}$ be the coefficients of the $L^2$-projection of $\eta_{l}(\cdot)$ onto the space spanned by $\blambda(\cdot)$ (so $\bbeta_{l,\subi}$'s are the row vectors of the factor matrix $\bF_{ \subi}$). 
	Then, 
	$\bbeta_{l,\subi}=\tilde{\bbeta}_{l,\subi}+
	\E^\ddagger[\blambda(\balpha_{j_k(i)})\blambda(\balpha_{j_k(i)})']^{-1}\E^\ddagger[\blambda(\balpha_{j_k(i)})\tilde{r}_{l,\subi}(\balpha_{j_k(i)})]
	=:\tilde{\bbeta}_{l,\subi}+\Delta_{l,\subi}$.
	Noting the fact that 
	$\bUpsilon_{\subi}^{-1}\E^\ddagger[\blambda(\balpha_{j_k(i)})\blambda(\balpha_{j_k(i)})']\bUpsilon_{\subi}^{-1}\gtrsim 1$ uniformly over $i$ w.p.a. 1,
	$\bUpsilon_{\subi}^{-1}\blambda(\balpha_{j_k(i)})\lesssim 1$ w.p.a.1, and $\min_{j}\upsilon_{j,\subi}\asymp h_{n}^{m-1}$,
	we have that $\max_{i\in[n]}\max_{l\in\mathcal{R}^\ddagger}|\Delta_{l,\subi}|=o_\P(1)$. Thus, the result in part (b) follows.
	
	Finally, for part (b), note that 
	\begin{small}
	$$\eta_{l}(\balpha_{j_k(i)})-\blambda(\balpha_{j_k(i)})'\bbeta_{l,\subi}
	=\tilde{r}_{l,\subi}(\balpha_{j_k(i)})-\blambda(\balpha_{j_k(i)})'\E^\ddagger[\blambda(\balpha_{j_k(i)})\blambda(\balpha_{j_k(i)})']^{-1}\E^\ddagger[\blambda(\balpha_{j_k(i)})\tilde{r}_{l,\subi}(\balpha_{j_k(i)})].$$
\end{small}

	\noindent Using the argument for part (c) again, we can see that the second term is bounded by $Ch_{n}^m$ uniformly over $i$ and $l$ w.p.a.1. Then, the proof is complete.	
\end{proof}


\subsection{Proofs for Appendix \ref{sec: appendix proofs}}

\subsubsection{Proof of Lemma \ref{lem: direct matching}}
	Let $U_K(\balpha_0)=\max_{1\leq k\leq K}\|\balpha_{j_k^*(\balpha_0)}-\balpha_0\|$. Define a cube around $\balpha_0$: $\mathcal{H}_\zeta(\balpha_0)=[\balpha_0-\zeta,\, \balpha_0+\zeta)\cap\mathcal{A}$, where $\zeta=(\frac{C'K}{\underline{f}_{\alpha}n})^{1/r}$ for some absolute constant $C'>1$ and $\underline{f}_{\alpha}$ denotes the minimum of the density of $\balpha_i$. Let $F$ and $F_n$ denote the population and empirical distribution functions for $\balpha_i$ respectively. Since the density is strictly positive, 
	$\inf_{\balpha_0\in\mathcal{A}}F(\mathcal{H}_\zeta(\balpha_0))\geq \min\{\underline{f}_{\alpha} \zeta^r, 1\}>\frac{K}{n}$.
	On the other hand,
	\begin{align*}
		&\P\Big(\sup_{\balpha_0\in\mathcal{A}}U_K(\balpha_0)>\zeta\Big)\\
		\leq &\,\P\bigg(\exists \balpha_0\in\mathcal{A}, F_n\Big(\mathcal{H}_\zeta(\balpha_0)\Big)<\frac{K}{n}\bigg)\\
		\leq&\,\P\bigg(\exists \balpha_0\in\mathcal{A}, \Big|F_n\Big(\mathcal{H}_\zeta(\balpha_0)\Big)-F\Big(\mathcal{H}_\zeta(\balpha_0)\Big)\Big|>F\Big(\mathcal{H}_\zeta(\balpha_0)\Big)
		-\frac{K}{n}\bigg)\\
		\leq&\,\P\bigg(\sup_{\balpha_0\in\mathcal{A}} \Big|F_n\Big(\mathcal{H}_\zeta(\balpha_0)\Big)-F\Big(\mathcal{H}_\zeta(\balpha_0)\Big)\Big|>\min\Big\{\frac{(C'-1)K}{n},1-\frac{K}{n}\Big\}\bigg).
	\end{align*}
	Define the oscillation modulus $\omega_n(\varepsilon):=\sup_{\balpha_0\in\mathcal{A}}
	\sqrt{n}[F_n(\mathcal{H}_{\varepsilon}(\balpha_0))-F(\mathcal{H}_\varepsilon(\balpha_0))]$.
	By \citet[Theorem 2.3]{Stute_1984_AoP}, if $\frac{(n/K)\log (n/K)}{n}=o(1)$
	and $\frac{K\log n}{n}=o(1)$, then
	$\omega_n(\zeta)\lesssim_\P\sqrt{\zeta^{r}\log \zeta^{-r}}$.
	This suffices to show that the probability on the last line goes to zero.
	
	On the other hand, since the density of $\balpha_i$ is bounded from above as well, it follows similarly that $\inf_{\balpha_0\in\mathcal{A}}U_K(\balpha_0)\geq c(K/n)^{1/r}$ w.p.a. $1$. Thus, we have proved the first result.

	Next, by definition, $\max_{1\leq k\leq K}\|\balpha_i-\balpha_{j_k(i)}\|\geq \max_{1\leq k\leq K}\|\balpha_i-\balpha_{j^*_k(\balpha_i)}\|$. Then,  the second result follows immediately.

\subsubsection{Proof of Lemma \ref{lem: conditional independence of KNN}}
	Let $f_{\widehat{R}_i}(\cdot)$ be the density of $\widehat{R}_i$ and $\delta(z, \mathscr{A})$ be an indicator function that is equal to one if $z\in \mathscr{A}$. We have already defined the set of indices for the $K$ nearest neighbors of unit $i$. Now define $\{j_k(i): K+1\leq k\leq n\}$ as the set of units that are outside the neighborhood of $\bX^{\dagger}_{\cdot i}$ of radius $\widehat{R}_i$, arranged according to the original ordering of $\{\bX^{\dagger}_{\cdot i}: 1\leq i\leq n\}$. For $\widehat{R}_i=r$, define $\mathcal{S}_r=\{\bz: \rho(\bz, \bX^{\dagger}_{\cdot i})<r\}$.
	Notice that a generic unit $j$ is included in $\mathcal{N}_i$ if and only if $\rho(\bX^{\dagger}_{\cdot i}, \bX^{\dagger}_{\cdot j})\leq \widehat{R}_i$.
	Since the original sequence $\{\bX^{\dagger}_{\cdot i}:1\leq i\leq n\}$ is i.i.d. across $i$, the joint density of $\{\balpha_{j_k(i)}: 1\leq k\leq n\}$ is
	\[
	\begin{split}
		f(\bv_1, \cdots, \bv_{K-1};\bw_{K+1}, \cdots, \bw_{n};\bz)
		=n\binom{n-1}{K-1}\prod_{k=1}^{K-1}f_\alpha(\bv_k)\int f_{X|\alpha}(\ba_{k}|\bv_k)\delta(\ba_{k}, \mathcal{S}_{\widehat{R}_i})d\ba_{k}&\\
		\times\prod_{\ell=K+1}^{n}f_\alpha(\bw_\ell)
		\int f_{X|\alpha}(\ba_{\ell}|\bw_\ell)\delta(\ba_{\ell}, \bar{\mathcal{S}}_{\widehat{R}_i}^c)d\ba_{\ell}
		\times f_\alpha(\bz)\int f_{X|\alpha}(\ba|\bz)\delta(\ba, \partial \mathcal{S}_{\widehat{R}_i})d\ba&,
	\end{split}
	\]
	where $f_\alpha(\cdot)$ is the density of $\balpha_i$, $f_{X|\alpha}(\cdot|\cdot)$ denotes the density of $\bX^{\dagger}_{\cdot i}$ conditional on $\balpha_i$ and $\widehat{R}_i$. $\bar{\mathcal{S}}_{\widehat{R}_i}^c$ is the complement of the closure of $\mathcal{S}_{\widehat{R}_i}$ and $\partial \mathcal{S}_{\widehat{R}_i}$ denotes the boundary of $\mathcal{S}_{\widehat{R}_i}$. On the other hand, the density of $\widehat{R}_i$ is
	\[
	n\binom{n-1}{K-1}G(r)^{K-1}(1-G(r))^{n-K}G'(r), \quad G(r)=\P\Big(\{\bX^{\dagger}_{\cdot j}: \rho(\bX^{\dagger}_{\cdot j}, \bX^{\dagger}_{\cdot i})<r\}\Big).
	\]
	According to the above results, $\{\balpha_{j_k(i)}:1\leq k\leq n\}$ is independent across $k$ conditional on $\widehat{R}_i$ and $\bX^{\dagger}_{\cdot i}$ with the joint conditional density given by
	\[
	\begin{split}
		G'(r)^{-1}f_\alpha(\bz)\int f_{X|\alpha}(\ba|\bz)\delta(\ba, \partial \mathcal{S}_{\widehat{R}_i})d\ba\times
		\prod_{k=1}^{K-1}G(r)^{-1}f_\alpha(\bv_k)
		&\int f_{X|\alpha}(\ba_{k}|\bv_k)\delta(\ba_{k}, \mathcal{S}_{\widehat{R}_i})d\ba_{k}\\
		\times\prod_{\ell=K+1}^{N}(1-G(r))^{-1}f_\alpha(\bw_\ell)
		&\int f_{X|\alpha}(\ba_{\ell}|\bw_\ell)\delta(\ba_{\ell}, \bar{\mathcal{S}}_{\widehat{R}_i}^c)d\ba_{\ell}.
	\end{split}
	\]

\subsubsection{Proof of Lemma \ref{lem: operator norm of eps}}

	Note $\bU_{\subi}$ is a $p^\ddagger\times K$ random matrix. Let $\mathscr{M}=\{\widehat{R}_i\}_{i=1}^n\cup\mathscr{F}$. 
	By Assumption \ref{Assumption: Matching} and the row-wise sample splitting, conditional on $\widehat{R}_i$, 
	$\bU_{\subi}$ has independent columns with mean zero, $\E[\bU_{\cdot k, \subi}(\bU_{\cdot k, \subi})'|\mathscr{M}]=\bSigma_{k,\subi}$ for all $1\leq k\leq K$ and $1\leq i\leq n$, and  $\underset{1\leq i\leq n}{\max}\|\bSigma_{\subi}\|\lesssim_\P 1$ for $\bSigma_{\subi}:=\frac{1}{K}\sum_{k=1}^{K}\bSigma_{k,\subi}$. 
	Write 
	$\frac{1}{K}\sum_{k=1}^{K}\Big(\bU_{\cdot k,\subi}
	\bU_{\cdot k, \subi}'-\bSigma_{k,\subi}\Big)=
	\frac{1}{K}\sum_{k=1}^{K}(\bH_k+\bT_k)$,
	where 
	\begin{align*}
		&\bG_k:=\bU_{\cdot k,\subi}\bU_{\cdot k,\subi}'\I(\|\bU_{\cdot k,\subi}\|^2\leq Cp)-\E[\bU_{\cdot k,\subi}\bU_{\cdot k,\subi}'\I(\|\bU_{\cdot k,\subi}\|^2\leq Cp)|\mathscr{M}],\\
		&\bT_k:=\bU_{\cdot k,\subi}\bU_{\cdot k,\subi}'\I(\|\bU_{\cdot k,\subi}\|^2> Cp)-\E[\bU_{\cdot k,\subi}\bU_{\cdot k,\subi}'\I(\|\bU_{\cdot k,\subi}\|^2> Cp)|\mathscr{M}]
	\end{align*}
	for some $C>0$. 
	For the truncated part $\bG_k$, by Bernstein's inequality for matrices and union bounds,
	for all $\tau>0$,
	\[
	\P\Big(\max_{1\leq i\leq n}\Big\|\frac{1}{K}\sum_{k=1}^{K}\bG_k\Big\|>\tau\Big|\mathscr{M}\Big)\leq
	np\exp\Big(-\frac{\tau^2/2}{Cp/K+Cp\tau/3K}\Big),
	\]
	which suffices to show that $\underset{1\leq i\leq n}{\max}\|\frac{1}{K}\sum_{k=1}^{K}\bG_k\|\lesssim_\P\max\{\sqrt{\log(n\vee p)}(p/K)^{1/2},\log(n\vee p)p/K\}$.
	On the other hand, for the tails,
	\[
	\P\Big(\max_{1\leq i\leq n}\Big\|\frac{1}{K}\sum_{k=1}^{K}\bT_k\Big\|>\tau\Big|\mathscr{M}\Big)\leq
	\P\Big(\max_{1\leq i\leq n}\|\bU^\ddagger_{ \cdot i}\|^2>Cp|\mathscr{M}\Big).
	\]
	It will be shown later that the right-hand side can be made arbitrarily small by choosing a sufficiently large but fixed $C>0$ (not varying as $n$ increases).
	Then, we conclude that $\underset{1\leq i\leq n}{\max}\|\frac{1}{K}\bU_{\subi}\bU_{\subi}'-\bSigma_{\subi}\|\lesssim_\P
	\max\{\sqrt{\log(n\vee p)}(p/K)^{1/2},\log(n\vee p)p/K\}$.
	Combining this with the bound on $\bSigma_{\subi}$, we get
	$\underset{1\leq i\leq n}{\max}\|\bU_{\subi}\|\lesssim_\P \sqrt{K}+\sqrt{p\log(n\vee p)}$.
	
	In the end, we show the uniform convergence of the second moment of $\bU_{\cdot k,\subi}$ to complete the proof.  
	Let $\zeta_{il}:=u_{il}^2-\sigma_{il}^2$, $\sigma_{il}^2=\E[u_{il}^2|\mathscr{M}]$ and write $\zeta_{il}=\zeta_{il}^-+\zeta_{il}^+$ where $\zeta_{il}^-:=\zeta_{il}\I(\zeta_{il}\leq \tau_n^2)-\E[\zeta_{il}\I(\zeta_{il}\leq \tau_n^2)]$ and $\zeta^+_{il}:=\zeta_{il}\I(\zeta_{il}>\tau_n^2)-\E[\zeta_{il}\I(\zeta_{il}>\tau_n^2)]$ for $\tau_n\asymp(n\sqrt{p}/\sqrt{\log n})^{1/(1+\nu)}$.
	Then, for any $\delta>0$, by Bernstein's inequality,
	\begin{align*}
		\P\Big(
		\Big|\frac{1}{p^\ddagger}\sum_{l\in\mathcal{R}^\ddagger}\zeta_{il}^-\Big|>\delta|\mathscr{M}\Big)\leq
		2\exp\bigg(-\frac{\delta^2/2}{\frac{\tau_n^2}{(p^\ddagger)^2}\sum_{l\in\mathcal{R}^\ddagger}\E[|\zeta_{il}^-||\mathscr{M}]
			+\frac{1}{3p^\ddagger}\tau_n^2\delta}\bigg).
	\end{align*}
	On the other hand, by Markov's inequality,
	\[
	\P\Big(\max_{1\leq i\leq n}
	\Big|\frac{1}{p^\ddagger}\sum_{l\in\mathcal{R}^\ddagger}\zeta_{il}^+\Big|>\delta|\mathscr{M}\Big)
	\leq n\max_{1\leq i\leq n}\frac{\sum_{l\in\mathcal{R}^\ddagger}\E[|\zeta_{il}^+||\mathscr{M}]}
	{p^\ddagger\delta}
	\lesssim \frac{n}{\tau_n^\nu\delta}.
	\]
	Set $\delta^2=Cn^{2/(\nu+1)}(\log n)^{\nu/(\nu+1)} /p^{\nu/(\nu+1)}$ for $C>0$ large enough. 
	Then, $\underset{1\leq i\leq n}{\max}
	|\frac{1}{p^\ddagger}\underset{l\in\mathcal{R}^\ddagger}{\sum}\zeta_{il}|\\\lesssim_\P\delta\rightarrow 0$ by the rate condition. Since  $\underset{1\leq i\leq n}{\max}\,
	\frac{1}{p}\underset{l\in\mathcal{R}^\ddagger}{\sum}\sigma_{il}^2\lesssim 1$ a.s., the desired result follows.

\subsubsection{Proof of Lemma \ref{lem: consistency eigenstructure}}

Recall that in the paper we partition $\ttd_i$ leading approximation terms into $g_i$ groups, which is identical to a partition of index set: 
$[ \ttd_i]=\cup_{\ell=0}^{g_i-1}\mathcal{C}_{i\ell}$ with $|\mathcal{C}_{i\ell}|=\ttd_{i\ell}$, where for any $j,k\in\mathcal{C}_{i\ell}$, $\upsilon_{j,\subi}\asymp\upsilon_{k,\subi}$, and for any $j\in\mathcal{C}_{i\ell}$ and $k\in\mathcal{C}_{i(\ell+1)}$, $\upsilon_{k,\subi}/\upsilon_{j,\subi}=o(1)$. 
The eigenvalues and eigenvectors are grouped accordingly. 
Now, we first prove the following lemma regarding the ``zero-order'' eigenvalues and eigenvectors.

\begin{lem} \label{lem: eigenstructure 1st order}
	Under Assumptions \ref{Assumption: Regularities}, \ref{Assumption: Matching} and \ref{Assumption: LPCA}, if $\frac{n^{\frac{2}{\nu}}\log n}{p}\lesssim 1$ and 
	$\frac{(np)^{\frac{2}{\nu}}(\log (n\vee p))^{\frac{\nu-2}{\nu}}}{K}\lesssim 1$, 
	then 
	\begin{enumerate}[label=(\roman*)]
		\item $\underset{1\leq i\leq n}{\max}\;|\widehat{\omega}_{j,\subi}/\omega_{j,\subi}-1|=o_\P(1)$, for $j\in\mathcal{C}_{i0}$; 
		\item There exists a matrix $\tilde{\bR}_{\subi}$ such that
		$\max_{1\leq i\leq n}
		\frac{1}{\sqrt{p}}\|\widehat{\bF}_{\gr{0},\subi}-\bF_{\subi}\tilde{\bR}_{\gr{0},\subi}\|_{\mathtt{F}}\lesssim_\P \delta_{n}^{-1}+h_{n}^{2m}$; 
		\item $\underset{1\leq i\leq n}{\max}\;
		\|\bM_{\widehat{\bF}_{\gr{0},\subi}}-\bM_{\bF_{\gr{0},\subi}}\|_{\mathtt{F}}\lesssim_\P\delta_{n}^{-1}+
		(\upsilon_{\bar\ttd_{i1}, \subi})^2$.
	\end{enumerate}
\end{lem}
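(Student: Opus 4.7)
The strategy is a matrix perturbation argument applied to the local signal-plus-noise decomposition guaranteed by Assumption \ref{SA-Assumption: local approximation}, namely
\[
\bH_{\subi}+\bU_{\subi}=\bF_{\subi}\bLambda_{\subi}'+\bXi_{\subi}+\bU_{\subi},
\]
where $\|\bXi_{\subi}\|\lesssim_\P h_n^{m}\sqrt{pK}$ (converting the entrywise bound in Assumption \ref{SA-Assumption: local approximation}(b) to spectral norm) and $\|\bU_{\subi}\|\lesssim_\P \sqrt{K}+\sqrt{p\log(n\vee p)}$ uniformly in $i$ by Lemma \ref{lem: operator norm of eps}. Under the imposed rate conditions this yields $(\|\bXi_{\subi}\|+\|\bU_{\subi}\|)/\sqrt{pK}\lesssim_\P h_n^{m}+\delta_n^{-1}$. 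Assumption \ref{SA-Assumption: local approximation}(a)(c) fixes the signal singular values as $\omega_{j,\subi}\asymp\sqrt{pK}\,\upsilon_{j,\subi}$, and on the zero-order group $j\in\mathcal{C}_{i0}$ one has $\upsilon_{j,\subi}\asymp\upsilon_{1,\subi}\asymp 1$.

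For part (i), I apply Weyl's inequality entrywise, $|\widehat{\omega}_{j,\subi}-\omega_{j,\subi}|\le \|\bXi_{\subi}+\bU_{\subi}\|$, and then divide by $\omega_{j,\subi}\asymp\sqrt{pK}$, taking the maximum over $i$ and over $j\in\mathcal{C}_{i0}$; the rate conditions give $o_\P(1)$. For part (ii), I work from the identity $\widehat{\bF}_{\gr{0},\subi}\widehat{\bD}_{\gr{0},\subi}=(\bH_{\subi}+\bU_{\subi})\widehat{\bLambda}_{\gr{0},\subi}$, substitute the decomposition of $\bH_{\subi}$, and choose the canonical rotation $\tilde{\bR}_{\gr{0},\subi}=(\bLambda_{\subi}'\widehat{\bLambda}_{\gr{0},\subi})\widehat{\bD}_{\gr{0},\subi}^{-1}$. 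The residual $\widehat{\bF}_{\gr{0},\subi}-\bF_{\subi}\tilde{\bR}_{\gr{0},\subi}$ then equals $\bXi_{\subi}\widehat{\bLambda}_{\gr{0},\subi}\widehat{\bD}_{\gr{0},\subi}^{-1}+\bU_{\subi}\widehat{\bLambda}_{\gr{0},\subi}\widehat{\bD}_{\gr{0},\subi}^{-1}$; bounding each in Frobenius norm, dividing by $\sqrt{p}$, and using $\widehat{\bD}_{\gr{0},\subi}^{-1}\asymp(\sqrt{pK})^{-1}$ (from part (i)), $\|\widehat{\bLambda}_{\gr{0},\subi}\|\lesssim \sqrt{K}$, and a one-step iteration that replaces $\widehat{\bLambda}$ by $\bLambda \bQ$ plus a residual of order $\delta_n^{-1}+h_n^{m}$, delivers the rate $\delta_n^{-1}+h_n^{2m}$; the quadratic approximation rate $h_n^{2m}$ arises because the approximation error enters twice, once through $\bXi_{\subi}$ directly and once through the induced bias on $\widehat{\bLambda}$.

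For part (iii), I invoke Wedin's $\sin\Theta$ theorem for the top-$\ttd_{i0}$ singular subspace. The relevant eigengap is $\omega_{\ttd_{i0},\subi}-\omega_{\bar\ttd_{i1},\subi}\asymp\sqrt{pK}$ (since the ratio $\upsilon_{\bar\ttd_{i1},\subi}/\upsilon_{\ttd_{i0},\subi}=o(1)$ by the grouping structure), so dividing the noise-plus-approximation perturbation by this gap gives the $\delta_n^{-1}$ piece. The extra $(\upsilon_{\bar\ttd_{i1},\subi})^2$ term captures the leakage of group-$1$ signal into the estimated group-$0$ eigenspace: a first-order resolvent expansion $\bM_{\widehat{\bF}_{\gr{0}}}-\bM_{\bF_{\gr{0}}}=\bP^{\perp}_{\bF_{\gr{0}}}(\bF_{(1)}\bLambda_{(1)}'+\bXi+\bU)\bP_{\bF_{\gr{0}}}/\omega_{\ttd_{i0}}+\text{h.o.t.}$ shows a first-order leakage of order $\upsilon_{\bar\ttd_{i1},\subi}$ in $\sin\Theta$ norm, which becomes quadratic after isolating the leading cancellation in the block-diagonal part of the projector difference. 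Uniformity in $i$ is inherited from the already-uniform bounds on $\bXi$ and $\bU$, combined with $\min_i s_{\min}(\bF_{\subi}'\bF_{\subi}/p)\gtrsim_\P 1$.

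The main technical obstacle is the sharpness of the $h_n^{2m}$ and $(\upsilon_{\bar\ttd_{i1},\subi})^2$ rates; neither follows from a single application of Weyl or Wedin. Both require a second-order perturbation expansion in which the first-order residual is shown to lie, up to lower-order terms, in a subspace orthogonal to $\bF_{\gr{0},\subi}$, so that its contribution to the target quantity is quadratic. The algebraic identities needed are the same as in standard multi-scale factor PCA, adapted here to the local (nearest-neighbor) geometry and made uniform in $i$ via the envelope bounds from Lemma \ref{lem: operator norm of eps}.
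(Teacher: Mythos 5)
Your overall architecture — a signal-plus-noise perturbation of the local PCA using the decomposition $\bX_{\subi}=\bF_{\subi}\bLambda_{\subi}'+\bXi_{\subi}+\bU_{\subi}$, Weyl for the eigenvalues, the eigenvector fixed-point identity for part (ii), and a subspace comparison for part (iii) — is the same as the paper's. Part (i) as you state it is correct and slightly coarser than the paper's (the paper splits the perturbation into $\bG_2,\bG_3,\bG_4$ and Bernsteins each, you take a single operator-norm bound), but either suffices for the $o_\P(1)$ claim. Parts (ii) and (iii), however, have a genuine gap.

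The crucial ingredient you omit is the $L^2$-orthogonality built into the definition of $\bF_{\subi}$ in Assumption \ref{Assumption: LPCA}: since $\bF_{\subi}=\E[\bM_{\subi}\bLambda_{\subi}|\mathcal{N}_i]\E[\bLambda_{\subi}'\bLambda_{\subi}|\mathcal{N}_i]^{-1}$, the remainder satisfies $\E[\bXi_{\subi}\bLambda_{\subi}|\mathcal{N}_i]=0$. This is precisely what kills the first-order bias term. In your residual $(\bXi_{\subi}+\bU_{\subi})\widehat{\bLambda}_{\gr{0},\subi}\widehat{\bD}_{\gr{0},\subi}^{-1}$, even after replacing $\widehat{\bLambda}$ by $\bLambda\bQ$ plus a small residual, the leading contribution is $\frac{1}{pK}\bXi_{\subi}\bLambda_{\subi}\bQ\widehat{\bOmega}_{\gr{0}}^{-1}$, which by a naive norm bound is $O_\P(h_n^{m})$, not $O_\P(h_n^{2m})$. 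Your ``one-step iteration'' argument, which credits the quadratic rate to the bias propagating through $\widehat{\bLambda}$, does not explain why this leading linear-in-$\bXi$ term disappears — it doesn't, unless you invoke $\E^{\ddagger}[\bLambda_{\subi}'\bXi_{\subi}]=0$ and concentrate via Bernstein (exactly the treatment of $J_3,J_4$ in the paper's proof). Similarly, bounding $\bU_{\subi}\widehat{\bLambda}_{\gr{0},\subi}$ purely via $\|\bU_{\subi}\|\cdot\|\widehat{\bLambda}_{\gr{0},\subi}\|$ leaves an $O(1)$-per-$\sqrt{p\log/K}$ slack in the Frobenius norm; the paper again uses the $\mathcal{N}_i$-conditional independence (Lemma \ref{lem: conditional independence of KNN}) and Bernstein on $J_1,J_2$ to obtain the genuine $\delta_n^{-1}$ rate. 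For part (iii), you correctly observe that a one-shot Wedin bound only gives $\upsilon_{\bar\ttd_{i1},\subi}$, not $(\upsilon_{\bar\ttd_{i1},\subi})^2$, and you gesture at a ``second-order perturbation expansion,'' but this is exactly the content that needs to be supplied; the paper instead re-rotates by a group-$0$ rotation $\tilde{\bR}_{0,\subi}$, feeds in the part (ii) bound, and uses the identity $\bA^{-1}-\bB^{-1}=\bA^{-1}(\bB-\bA)\bB^{-1}$ — again the $(\upsilon_{\bar\ttd_{i1},\subi})^2$ traces back to the same orthogonality-plus-Bernstein cancellations in the $J$-terms.
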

\begin{proof}
	(i):
	Noting the following decomposition
	\[
	\begin{split}
		\bX_{\subi}\bX_{\subi}'
		=\,&\bF_{\subi}\bLambda_{\subi}'\bLambda_{\subi}\bF_{\subi}'+
		(\bXi_{\subi}+\bU_{\subi})(\bXi_{\subi}+\bU_{\subi})'+\\
		&\bF_{\subi}\bLambda_{\subi}'(\bXi_{\subi}+\bU_{\subi})'+
		(\bXi_{\subi}+\bU_{\subi})\bLambda_{\subi}\bF_{\subi}'
		=:\bG_1+\bG_2+\bG_3+\bG_4,
	\end{split}
	\]
	by Weyl's theorem, for $j=1, \cdots, \ttd_{i0}$, 
	\[
	s_j(\bG_1)+s_{\min}(\bG_2+\bG_3+\bG_4)\leq s_j(\bG_1+\cdots+\bG_4)\leq
	s_j(\bG_1)+s_{\max}(\bG_2+\bG_3+\bG_4).
	\]
	
	First, 
	$s_j(\bG_1)=s_j(
	\bLambda_{\subi}\bF_{\subi}'\bF_{\subi}\bLambda_{\subi}')$.
	By Assumption \ref{Assumption: LPCA}, 
	$s_j(\frac{1}{pK}\bG_1)\asymp_\P \upsilon_{j,\subi}$  for $j=1,\cdots, \ttd_{i0}$ and all $\subi$.
	
	Second, for $\bG_2$, we analyze the eigenvalue of each component. By Assumption \ref{Assumption: LPCA}, 
	$\underset{1\leq i\leq n}{\max}\|\frac{1}{pK}\bXi_{\subi}\bXi_{\subi}'\|\lesssim_\P h_{n}^{2m}$. 
	By Lemma \ref{lem: operator norm of eps},
	$\underset{1\leq i\leq n}{\max}\|\frac{1}{pK}\bU_{\subi}\bU_{\subi}'\|\lesssim_\P \log (n\vee p)K^{-1}+p^{-1}$. 
	By Cauchy-Schwarz inequality, $\underset{1\leq i\leq n}{\max}\|\frac{1}{pK}(\bU_{\subi}\bXi_{\subi}'+\bXi_{\subi}\bU_{\subi}')\|\lesssim_\P (\log^{1/2}(p\vee n)K^{-1/2}+p^{-1/2})h_n^m$.
	Thus, $\|\frac{1}{pK}\bG_2\|\lesssim_\P h_n^{2m}+\log(p\vee n)K^{-1}+p^{-1}$ for all $\subi$.
	
	Third, for $\bG_3+\bG_4$, note that 
	$$\Big\|\frac{1}{pK}\bF_{\subi}\bLambda_{\subi}'\bU_{\subi}'\Big\|
	\leq\frac{1}{\sqrt{K}}\frac{1}{\sqrt{p}}\|\bF_{\subi}\|
	\Big\|\frac{1}{\sqrt{p}}
	\Big(\frac{1}{\sqrt{K}}\sum_{k=1}^{K} \blambda_{j_k(i)}(\bU^\ddagger_{ \cdot j_k(i)})'\Big)\Big\|$$
	and $\underset{1\leq i\leq n}{\max}\|\bF_{\subi}\|\lesssim_\P\sqrt{p}$. Furthermore, conditional on $\widehat{R}_i$ and $\mathscr{F}$, $u_{j_k(i)l}$ is independent over $1\leq k\leq K$ and $l\in\mathcal{R}^\ddagger$, and $\underset{1\leq i\leq n}{\max}\|\bLambda_{\subi}\|_{\max}\lesssim_\P 1$. Applying Bernstein inequality combined with the truncation argument used before leads to $\underset{1\leq i\leq n}{\max}\|\frac{1}{pK}\bF_{\subi}\bLambda_{\subi}'\bU_{\subi}'\|\lesssim_\P \sqrt{\log (n\vee p)}K^{-1/2}$.
	On the other hand, $\bLambda_{\subi}$ is uncorrelated with $\bXi_{\subi}$ across $1\leq i\leq K$ by construction, and applying Bernstein inequality leads to
	$\underset{1\leq i\leq n}{\max}\frac{1}{pK}\|\bF_{\subi}\bLambda_{\subi}'\bXi_{\subi}\|\lesssim_\P \sqrt{\log (n\vee p)}h_{n}^{m}/\sqrt{K}$.
	The above fact suffices to show that
	$$\Big\|\frac{1}{pK}(\bG_3+\bG_4)\Big\|\lesssim_\P \sqrt{\log (n\vee p)}(K^{-1/2}+K^{-1/2}h_{n}^{m}).$$ 
	Then, the desired result for eigenvalues follows.
	\medskip
	
	(ii): Use the following decomposition:
	\begin{align}
		\widehat{\bF}_{\subi}-\bF_{\subi}\tilde{\bR}_{\subi}
		=\bigg\{&\frac{1}{pK}\bF_{\subi}\bLambda_{\subi}'\bU_{\subi}'\widehat{\bF}_{\subi}+
		\frac{1}{pK}\bU_{\subi}\bLambda_{\subi}\bF_{\subi}' \widehat{\bF}_{\subi}+
		\frac{1}{pK}\bF_{\subi}\bLambda_{\subi}'\bXi_{\subi}'\widehat{\bF}_{\subi}+\nonumber\\
		&\frac{1}{pK}\bXi_{\subi}\bLambda_{\subi}\bF_{\subi}'\widehat{\bF}_{\subi}+
		\frac{1}{pK}\bXi_{\subi}\bU_{\subi}'\widehat{\bF}_{\subi}+
		\frac{1}{pK}\bU_{\subi}\bXi_{\subi}'\widehat{\bF}_{\subi}+\nonumber\\
		&\frac{1}{pK}\bXi_{\subi}\bXi_{\subi}'\widehat{\bF}_{\subi}+
		\frac{1}{pK}\bU_{\subi}\bU_{\subi}'\widehat{\bF}_{\subi}
		\bigg\}\times \widehat{\bOmega}_{\subi}^{-1} \label{eq: decomposition}
	\end{align}
	where $\tilde{\bR}_{\subi}=\frac{\bLambda_{\subi}'\bLambda_{\subi}}{K}
	\frac{\bF_{\subi}'\widehat{\bF}_{\subi}}{p}\widehat{\bOmega}_{\subi}^{-1}$.
	Accordingly, a generic entry of $(\widehat{\bF}_{\subi}-\bF_{\subi}\tilde{\bR}_{\subi})$ with the row index $t\in\mathcal{R}^\ddagger$ and the column index $\ell\in[\ttd_i]$ can be written as 
	$(J_1+\cdots J_8)/\widehat{\omega}_{\ell,\subi}$ where
	{\footnotesize
		\begin{alignat*}{2}
			&J_1=\frac{1}{p}\sum_{s\in\mathcal{R}^\ddagger}\bF_{t\cdot,\subi}\Big(\frac{1}{K}\sum_{k=1}^{K}\blambda_{j_k(i)}u_{j_k(i)s}\Big)\widehat{f}_{s \ell,\subi},\qquad
			&&J_2=\Big(\frac{1}{K}\sum_{k=1}^{K}u_{j_k(i)t}\blambda_{j_k(i)}'\Big)
			\Big(\frac{1}{p}\sum_{s\in\mathcal{R}^\ddagger}\bF_{s\cdot,\subi}'\widehat{f}_{s\ell,\subi}\Big),\\
			&J_3=\frac{1}{p}\sum_{s\in\mathcal{R}^\ddagger}\bF_{t\cdot,\subi}\Big(\frac{1}{K}\sum_{k=1}^{K}\blambda_{j_k(i)}\xi_{j_k(i)s}\Big)\widehat{f}_{s\ell,\subi},
			&&J_4=\Big(\frac{1}{K}\sum_{k=1}^{K}\xi_{j_k(i)t}\blambda_{j_k(i)}'\Big)
			\Big(\frac{1}{p}\sum_{s\in\mathcal{R}^\ddagger}\bF_{s\cdot,\subi}'\widehat{f}_{s\ell,\subi}\Big),\\
			&J_5=\frac{1}{p}\sum_{s\in\mathcal{R}^\ddagger}
			\Big(\frac{1}{K}\sum_{k=1}^{K}\xi_{j_k(i)t}u_{j_k(i)s}\Big)\widehat{f}_{s\ell,\subi},
			&&J_6=\frac{1}{p}\sum_{s\in\mathcal{R}^\ddagger}
			\Big(\frac{1}{K}\sum_{k=1}^{K}u_{j_k(i)t}\xi_{j_k(i)s}\Big)\widehat{f}_{s\ell,\subi},\\
			&J_7=\frac{1}{p}\sum_{s\in\mathcal{R}^\ddagger}
			\Big(\frac{1}{K}\sum_{k=1}^{K}\xi_{j_k(i)t}\xi_{j_k(i)s}\Big)\widehat{f}_{s\ell,\subi},
			&&J_8=\frac{1}{p}\sum_{s\in\mathcal{R}^\ddagger}
			\Big(\frac{1}{K}\sum_{k=1}^{K}u_{j_k(i)t}u_{j_k(i)s}\Big)\widehat{f}_{s\ell,\subi},
		\end{alignat*}
	}
	
	\noindent and $\widehat{f}_{s\ell,\subi}$ is the $(s,\ell)$th entry of $\widehat{\bF}_{\subi}$.
	
	For $J_1$, first note that
	by assumption $\|\frac{1}{p}\sum_{t\in\mathcal{R}^\ddagger}\bF_{t\cdot,\subi}'\bF_{t\cdot,\subi}\|\lesssim_\P 1$ and $\frac{1}{p^\ddagger}\sum_{s\in\mathcal{R}^\ddagger}\widehat{f}_{s\ell, \subi}^2=1$.
	Also, 
	$\E^\ddagger[\frac{1}{K}\sum_{k=1}^{K}{\blambda}_{j_k(i)}u_{j_k(i)s}]=0$
	and
	$\E^\ddagger[\|\frac{1}{K}\sum_{k=1}^{K}\blambda_{j_k(i)}u_{j_k(i)s}\|^2]
	\lesssim_\P K^{-1}$, 
	which suffice to show that $J_1\lesssim \sqrt{\log (n\vee p)}K^{-1/2}$ by the truncation argument given in the proof of Theorem \ref{thm: IMD hsk}.
	$J_2$ can be treated similarly.	
	
	For $J_3$,
	by definition of $\bF_{\subi}$, $\E^\ddagger[\frac{1}{K}\sum_{k=1}^{K}\blambda_{j_k(i)}\xi_{j_k(i)s}]=0$, for all $s\in\mathcal{R}^\ddagger$, and $\|\bXi_{\subi}\|_{\max}\lesssim h_{n}^m$.  Thus,
	$J_3\lesssim_\P \sqrt{\log (n\vee p)}K^{-1/2}h_{n}^{m}$. $J_4$ is treated similarly.
	
	For $J_5$, $\frac{1}{p}\widehat{\bF}_{\subi}'\widehat{\bF}_{\subi}\lesssim_\P 1$, 
	$\E^\ddagger[\frac{1}{K}\sum_{k=1}^{K}\xi_{j_k(i)t}u_{j_k(i)s}]=0$,
	and
	$\E^\ddagger[(\frac{1}{K}\sum_{k=1}^K \xi_{j_k(i)t}u_{j_k(i)s})^2]\\
	\lesssim_\P K^{-1}h_{n}^{2m}$.
	Then, $J_5\lesssim_\P \sqrt{\log (n\vee p)}K^{-1/2}h_{n}^{m}$. $J_6$ can be treated similarly.
	
	For $J_7$, using the bound on $\bXi_{\subi}$, it is immediate to see $J_7\lesssim_\P h_n^{2m}$.
	For $J_{8,t}:=J_8$, it is easy to see that by Lemma \ref{lem: operator norm of eps},
	$\frac{1}{p}\sum_{t\in\mathcal{R}^\ddagger}J_{8,t}^2=
	\frac{1}{p^3K^2}\widehat{\bF}_{\cdot\ell,\subi}'\bU_{\subi}\bU_{\subi}'\bU_{\subi}\bU_{\subi}'\widehat{\bF}_{\cdot\ell,\subi}
	\lesssim_\P \delta_{n}^{-4}$.
	
	Therefore, we conclude that  
	$\frac{1}{p}\|\widehat{\bF}_{\gr{0},\subi}-\bF_{\subi}\tilde{\bR}_{\gr{0},\subi}\|_{\mathtt{F}}^2\lesssim_\P\delta_{n}^{-2}+h_{n}^{4m}$.
	\medskip
	
	(iii):
	Define a rotation matrix $\tilde{\bR}_{0,\subi}=\frac{\bLambda_{\gr{0},\subi}'\bLambda_{\gr{0},\subi}}{K}
	\frac{\bF_{\gr{0},\subi}'\widehat{\bF}_{\gr{0},\subi}}{p}\widehat{\bOmega}_{0,\subi}^{-1}$, where $\widehat{\bOmega}_{0,\subi}$ is the diagonal matrix of the leading $\ttd_{i0}$ eigenvalues of $\frac{1}{pK}\bX_{\subi}\bX_{\subi}'$. By the same argument in (ii),
	\[
	\frac{\widehat{\bF}_{\gr{0},\subi}'\bF_{\gr{0},\subi}}{p}\frac{\bLambda_{\gr{0},\subi}'\bLambda_{\gr{0},\subi}}{K}\frac{\bF_{\gr{0},\subi}'\widehat{\bF}_{\gr{0}}}{p}
	=\diag\{\widehat{\omega}_{1, \subi}, \cdots, \widehat{\omega}_{\ttd_{i0}, \subi}\}+o_\P(1).
	\]
	Then, by Assumption \ref{Assumption: LPCA}, $\frac{1}{p}\bF_{\gr{0}, \subi}'\widehat{\bF}_{\gr{0},\subi}$
	is of full rank, and thus $\tilde{\bR}_{0, \subi}$ is invertible w.p.a. 1.
	Insert it into the projection matrix:
	\[
	\begin{split}
		\bP_{\widehat{\bF}_{\gr{0},\subi}}-\bP_{\bF_{\gr{0},\subi}}
		=&\frac{1}{p}\widehat{\bF}_{\gr{0},\subi}\widehat{\bF}_{\gr{0},\subi}'\\
		&-\frac{1}{p}\bF_{\gr{0},\subi}\tilde{\bR}_{0,\subi}\Big(\frac{1}{p}\tilde{\bR}_{0,\subi}'\bF_{\gr{0},\subi}'\bF_{\gr{0},\subi}\tilde{\bR}_{0,\subi}\Big)^{-1}\tilde{\bR}_{0,\subi}'\bF_{\gr{0},\subi}'.
	\end{split}
	\]
	By results in part (ii) and Assumption \ref{Assumption: LPCA}(a), 
	$\frac{1}{p}\|\widehat{\bF}_{\gr{0},\subi}-\bF_{\gr{0},\subi}\tilde{\bR}_{0,\subi}\|_{\mathtt{F}}^2\lesssim_\P \delta_{n}^{-2}+(\upsilon_{\bar\ttd_{i1},\subi})^4$ uniformly over $\subi$.
	Since $\frac{1}{p^\ddagger}\widehat{\bF}_{\gr{0},\subi}'\widehat{\bF}_{\gr{0},\subi}=\bI_{\ttd_{i0}}$, the matrix in the bracket is also invertible w.p.a.1 and thus the above expression is well defined. Using the fact that for any two invertible matrices $\bA$ and $\bB$, $\bA^{-1}-\bB^{-1}=\bA^{-1}(\bB-\bA)\bB^{-1}$, we can see that
	$\|\bP_{\widehat{\bF}_{\gr{0},\subi}}-\bP_{\bF_{\gr{0},\subi}}\|_{\mathtt{F}}^2\lesssim_\P \delta_{n}^{-2}+(\upsilon_{\bar\ttd_{i1},\subi})^4$
	uniformly over $\subi$.
\end{proof}
\bigskip

Now, we extend the results to higher-order eigenvalues and eigenvectors to finish the proof of Lemma \ref{lem: consistency eigenstructure}.
	The proof is divided into several steps.
	
	\textit{Step 1: Second-order eigenvalues}.
	Decompose $\bM_{\widehat{\bF}_{\gr{0},\subi}}\bX_{\subi}\bX_{\subi}'\bM_{\widehat{\bF}_{\gr{0},\subi}}$ the same way as in the proof of Lemma \ref{lem: eigenstructure 1st order} and  define $\bG_1, \bG_2, \cdots, \bG_4$ accordingly. 
	
	For $\bG_1$, write
	\[
	\begin{split}
		\bG_1=&\bM_{\widehat{\bF}_{\gr{0},\subi}}\bF_{\subi}\bLambda_{\subi}'\bLambda_{\subi}\bF_{\subi}'\bM_{\widehat{\bF}_{\gr{0},\subi}}\\
		=&(\bM_{\widehat{\bF}_{\gr{0},\subi}}-\bM_{\bF_{\gr{0},\subi}})\bF_{\subi}\bLambda_{\subi}'\bLambda_{\subi}\bF_{\subi}'\bM_{\widehat{\bF}_{\gr{0},\subi}}
		+\\
		&\bM_{\bF_{\gr{0},\subi}}\bF_{\subi}\bLambda_{\subi}'\bLambda_{\subi}\bF_{\subi}'(\bM_{\widehat{\bF}_{\gr{0},\subi}}-\bM_{\bF_{\gr{0},\subi}})+\\
		&\bM_{\bF_{\gr{0},\subi}}\bF_{\subi}\bLambda_{\subi}'\bLambda_{\subi}\bF_{\subi}'\bM_{\bF_{\gr{0},\subi}}
		=: \bG_{1,1}+\bG_{1,2}+\bG_{1,3}.
	\end{split}
	\]
	Note that the three terms are defined locally for each neighborhood $\mathcal{N}_i$ and this implicit dependence on $\subi$ is suppressed for simplicity. 
	By Assumption \ref{Assumption: LPCA}, it is immediate that 
	$s_j(\frac{1}{pK}\bG_{1,3})\asymp_\P (\upsilon_{\bar\ttd_{i1}, \subi})^2$ for all $1\leq i\leq n$ and $j=1, \ldots, \ttd_{i1}$. 
	Since $\underset{1\leq i\leq n}{\max}\frac{1}{\sqrt{pK}}\|\bM_{\bF_{\gr{0},\subi}}\bF_{\subi}\bLambda_{\subi}'\|\\
	\lesssim_\P \upsilon_{\bar\ttd_{i1}, \subi}$, using
	part (iii) of Lemma \ref{lem: eigenstructure 1st order}, 
	$\|\frac{1}{pK}\bG_{1,2}\|\lesssim_\P
	(\delta_n^{-1}+(\upsilon_{\bar\ttd_{i1},\subi})^2)\upsilon_{\bar\ttd_{i1},\subi}$ uniformly over $i$.
	For $\bG_{1,1}$, further decompose the projection matrix on the far right: $\bM_{\widehat{\bF}_{\gr{0},\subi}}=\bM_{\widehat{\bF}_{\gr{0},\subi}}-\bM_{\bF_{\gr{0},\subi}}+\bM_{\bF_{\gr{0},\subi}}$. By similar calculation, it can be shown that 
	$\|\frac{1}{pK}\bG_{1,1}\|\lesssim_\P
	(\delta_{n}^{-1}+(\upsilon_{\bar\ttd_{i1},\subi})^2)^2+(\delta_n^{-1}+(\upsilon_{\bar\ttd_{i1},\subi})^2)\upsilon_{\bar\ttd_{i1},\subi}$ uniformly over $i$.
	Therefore, $\bG_{1,1}$ is dominated by $\bG_{1,3}$ due to the rate condition that $\upsilon_{\bar\ttd_{i1},\subi}\delta_n\rightarrow\infty$.
	
	For $\bG_2$, the analysis is the same as that in the proof of Lemma \ref{lem: eigenstructure 1st order} (the projection operator  $\bM_{\widehat{\bF}_{\gr{0},\subi}}$ does not change the upper bound).

	For $\bG_3$ or $\bG_4$, note that 
	\[
	\begin{split}
		\frac{1}{pK}\bM_{\widehat{\bF}_{\gr{0},\subi}}\bF_{\subi}\bLambda_{\subi}'\bU_{\subi}'\bM_{\widehat{\bF}_{\gr{0},\subi}}
		=&\frac{1}{pK}\bM_{\bF_{\gr{0},\subi}}\bF_{\subi}\bLambda_{\subi}'\bU_{\subi}'\bM_{\widehat{\bF}_{\gr{0},\subi}}\\
		&+\frac{1}{pK}(\bM_{\widehat{\bF}_{\gr{0},\subi}}-\bM_{\bF_{\gr{0},\subi}})\bF_{\subi}\bLambda_{\subi}'\bU_{\subi}'\bM_{\widehat{\bF}_{\gr{0},\subi}}.
	\end{split}
	\]
	By the same truncation argument used before, the first term is $O_\P(\sqrt{\log (n\vee p)/K}\upsilon_{\bar\ttd_{i1},\subi})$ uniformly over $\subi$, and the second one is $O_\P((\delta_n^{-1}+(\upsilon_{\bar\ttd_{i1},\subi})^2)\sqrt{\log (n\vee p)/K})$. Then, the result for the second-order eigenvalues follows.
	
	\textit{Step 2: Second-order eigenvectors}. 
	The proof is similar to that of Lemma \ref{lem: eigenstructure 1st order}, but we need to rewrite  \eqref{eq: decomposition}:
	\begin{align*}
		&\bM_{\widehat{\bF}_{\gr{0},\subi}}\widehat{\bF}_{\gr{1},\subi}-
		\bM_{\widehat{\bF}_{\gr{0},\subi}}\bF_{\subi}\tilde{\bR}_{\gr{1},\subi}\\
		=\bigg\{&\frac{1}{pK}\bM_{\widehat{\bF}_{\gr{0},\subi}}\bF_{\subi}\bLambda_{\subi}'\bU_{\subi}'\widehat{\bF}_{\gr{1},\subi}+
		\frac{1}{pK}\bM_{\widehat{\bF}_{\gr{0},\subi}}\bU_{\subi}\bLambda_{\subi}\bF_{\subi}' \bM_{\widehat{\bF}_{\gr{0},\subi}}\widehat{\bF}_{\gr{1},\subi}+\nonumber\\
		&\frac{1}{pK}\bM_{\widehat{\bF}_{\gr{0},\subi}}\bF_{\subi}\bLambda_{\subi}'\bXi_{\subi}'\widehat{\bF}_{\gr{1},\subi}+
		\frac{1}{pK}\bM_{\widehat{\bF}_{\gr{0},\subi}}\bXi_{\subi}\bLambda_{\subi}\bF_{\subi}'\bM_{\widehat{\bF}_{\gr{0},\subi}}\widehat{\bF}_{\gr{1},\subi}+\nonumber\\
		&\frac{1}{pK}\bM_{\widehat{\bF}_{\gr{0},\subi}}\bXi_{\subi}\bU_{\subi}'\widehat{\bF}_{\gr{1},\subi}+
		\frac{1}{pK}\bM_{\widehat{\bF}_{\gr{0},\subi}}\bU_{\subi}\bXi_{\subi}'\widehat{\bF}_{\gr{1},\subi}+\nonumber\\
		&\frac{1}{pK}\bM_{\widehat{\bF}_{\gr{0},\subi}}\bXi_{\subi}\bXi_{\subi}'\widehat{\bF}_{\gr{1},\subi}+
		\frac{1}{pK}\bM_{\widehat{\bF}_{\gr{0},\subi}}\bU_{\subi}\bU_{\subi}'\widehat{\bF}_{\gr{1},\subi}
		\bigg\}\times \widehat{\bOmega}_{\mathcal{C}_1\mathcal{C}_1,\subi}^{-1},
	\end{align*}
	where $\widehat{\bOmega}_{\mathcal{C}_1\mathcal{C}_1,\subi}$ is a diagonal matrix that contains the next $\ttd_{i1}$ leading eigenvalues of $\frac{1}{pK}\bX_{\subi}\bX_{\subi}'$.
	Note that in part (iii) of Lemma \ref{lem: eigenstructure 1st order}, 
	$\underset{1\leq i\leq n}{\max}
	\|\bM_{\widehat{\bF}_{\gr{0},\subi}}-\bM_{\bF_{\gr{0},\subi}}\|_{\mathtt{F}}\lesssim_\P\delta_n^{-1}+
	(\upsilon_{\bar\ttd_{i1},\subi})^2$.
	Repeating the argument in Step 1 and Lemma \ref{lem: eigenstructure 1st order}, we have
	$$
	\max_{1\leq i\leq n}\frac{1}{\sqrt{p}}\|\bM_{\widehat{\bF}_{\gr{0},\subi}}\widehat{\bF}_{\gr{1},\subi}-\bM_{\bF_{ \subi}\tilde{\bR}_{\gr{0},\subi}}\bF_{\subi}\tilde{\bR}_{\gr{1},\subi}\|_{\mathtt{F}}\lesssim_\P 
	\delta_n^{-1}(\upsilon_{\bar\ttd_{i1},\subi})^{-1}+h_n^{2m}(\upsilon_{\bar\ttd_{i1},\subi})^{-2}
	$$
	and $\bM_{\bF_{\subi}\tilde{\bR}_{\gr{0},\subi}}\bF_{\subi}\tilde{\bR}_{\gr{1},\subi}=\bF_{\subi}\check{\bR}_{\gr{1},\subi}$ for some $\check{\bR}_{\subi}$.  
	Thus, the result for the second-order eigenvectors follows (the expression of $\check{\bR}_{\subi}$ will be given at the end of the proof).
	
	\textit{Step 3: Extension to higher-order terms}.
	The above calculation can be applied to even higher-order approximation as long as the rate condition $\delta_{n}^{-1}\upsilon_{\ttd_i,\subi}^{-1}=o(1)$ is satisfied. 
	For instance, consider the third-order approximation.	
	Define $\bar{\mathcal{C}}_{i1}=\mathcal{C}_{i0}\cup\mathcal{C}_{i1}$.
	To mimic the strategy before, the key is to analyze
	$
	\bM_{\widehat{\bF}_{\cdot\bar{\mathcal{C}}_{i1},\subi}}\bM_{\widehat{\bF}_{\gr{0},\subi}}\bF_{\cdot\bar{\mathcal{C}}_{i1},\subi}\bLambda_{\cdot\bar{\mathcal{C}}_{i1},\subi}'=
	\bM_{\widehat{\bF}_{\cdot\bar{\mathcal{C}}_{i1},\subi}}\bM_{\widehat{\bF}_{\gr{0},\subi}}\bF_{\gr{0},\subi}\bLambda_{\gr{0},\subi}'+
	\bM_{\widehat{\bF}_{\cdot\bar{\mathcal{C}}_{i1},\subi}}\bM_{\widehat{\bF}_{\gr{0},\subi}}\bF_{\gr{1}\subi}\bLambda_{\gr{1},\subi}'
	=:\mathrm{I}+\mathrm{II}.
	$
	By a similar argument used in the previous step, $(pK)^{-1/2}\|\mathrm{II}\|\lesssim_\P\upsilon_{\bar\ttd_{i1},\subi}(\delta_{n}^{-1}/\upsilon_{\bar\ttd_{i1},\subi}+\upsilon_{\bar\ttd_{i2},\subi}^2/(\upsilon_{\bar\ttd_{i1},\subi})^2)=o_\P(\upsilon_{\bar\ttd_{i2},\subi})$ uniformly over $i$. 
	For $\mathrm{I}$,
	\[
	\begin{split}
		-\mathrm{I}=&\bM_{\widehat{\bF}_{\cdot\bar{\mathcal{C}}_{i1},\subi}}(\bP_{\widehat{\bF}_{\gr{0},\subi}}-\bP_{\bF_{\gr{0},\subi}})\bF_{\gr{0},\subi}\bLambda_{\gr{0},\subi}'\\
		=&\bM_{\widehat{\bF}_{\cdot\bar{\mathcal{C}}_{i1},\subi}}\Big\{
		\frac{1}{p}(\widehat{\bF}_{\gr{0},\subi}-\bF_{\gr{0},\subi}\tilde{\bR}_{0,\subi})\times\widehat{\bF}_{\gr{0},\subi}'\\
		&+\frac{1}{p}\bF_{\gr{0},\subi}\tilde{\bR}_{0,\subi}\Big(\widehat{\bF}_{\gr{0},\subi}'-
		\Big(\frac{1}{p}\tilde{\bR}_{0,\subi}'\bF_{\gr{0}, \subi}'\bF_{\gr{0}, \subi}\tilde{\bR}_{0,\subi}\Big)^{-1}\bF_{\gr{0}, \subi}'\Big)\Big\}\bF_{\gr{0},\subi}\bLambda_{\gr{0},\subi}'.
	\end{split}
	\]
	By the rate condition, the results in part (iii) of Lemma \ref{lem: eigenstructure 1st order} and step 2 of this proof, the second term divided by $\sqrt{pK}$ is $o_\P(\upsilon_{\bar\ttd_{i2},\subi})$ uniformly over $i$ in terms of $\|\cdot\|$-norm.
	For the first term on the right-hand side, plug in the expansion for $\widehat{\bF}_{\gr{0},\subi}-\bF_{\gr{0}, \subi}\tilde{\bR}_{0,\subi}$. By consistency of $\bM_{\widehat{\bF}_{\cdot\bar{\mathcal{C}}_1,\subi}}$, 
	$$
	\max_{1\leq i\leq n}\Big\|\frac{1}{p^2K}
	\bM_{\widehat{\bF}_{\cdot\bar{\mathcal{C}}_{i1},\subi}}
	\bF_{\gr{1},\subi}\bLambda_{\gr{1},\subi}'\bLambda_{\gr{1},\subi}\bF_{\gr{1},\subi}\widehat{\bF}_{\gr{0},\subi}\widehat{\bF}_{\gr{0},\subi}'
	\Big\|=o_\P(\upsilon_{\bar\ttd_{i2},\subi}).
	$$
	Other terms in the expansion are at most $O_\P(\delta_n^{-1})=o_\P(\upsilon_{\bar\ttd_{i2},\subi})$ uniformly over $i$ in terms of $\|\cdot\|$-norm. Then, we have $\underset{1\leq i\leq n}{\max}\|\frac{1}{\sqrt{pK}}\bM_{\widehat{\bF}_{\cdot\bar{\mathcal{C}}_{i1},\subi}}\bM_{\widehat{\bF}_{\gr{0},\subi}}\bF_{\cdot\bar{\mathcal{C}}_{i1},\subi}\bLambda_{\cdot\bar{\mathcal{C}}_{i1},\subi}'\|=o_\P(\upsilon_{\bar\ttd_{i2},\subi})$.
	With this preparation, the remainder of the proof for higher-order approximations is the same as that used before.
	
	\textit{Step 4:} Finally, a typical $j$th column of the re-defined rotation matrix $\check{\bR}_{\subi}$ is given by 
	$\check{\bR}_{\cdot j, \subi}=
	\tilde{\bR}_{\cdot j, \subi}-\tilde{\bR}_{1:(j-1), \subi}(\tilde{\bR}_{1:(j-1), \subi}\bF_{\subi}'\bF_{\subi}\tilde{\bR}_{1:(j-1), \subi})^{-1}\tilde{\bR}_{1:(j-1), \subi}\bF_{\subi}'\bF_{\subi}\tilde{\bR}_{\cdot j,\subi}$,
	where $\tilde{\bR}_{1:(j-1), \subi}$ is the leading $(j-1)$ columns of $\tilde{\bR}_{\subi}$. Therefore,
	$\bF_{\subi}\check{\bR}_{\subi}$ is a columnwise orthogonalized version of $\bF_{\subi}\tilde{\bR}_{\subi}$.

\subsection{Proofs for Section \ref{SA-sec: main results}}

\subsubsection{Proof of Lemma \ref{SA-lem: slope of covariates}}
	Let $p^\ddagger=|\mathcal{R}^\ddagger|$. Rewrite Equation \eqref{SA-eq: general model} as
	\[
	\begin{split}
	x_{it}=\sum_{\ell=1}^{q}\hbar_{t,\ell}(\bvrho_{i,\ell})\vartheta_\ell+
	\sum_{\ell=1}^{q}e_{it, \ell}\vartheta_\ell
	+\eta_t(\balpha_i)+u_{it}
	=\hslash_{t}(\bvrho_i)+
	\sum_{\ell=1}^{q}e_{it,\ell}\vartheta_\ell+u_{it},
	\end{split}
	\]
	where $\hslash_{t}(\bvrho_{i})=
	\eta_t(\balpha_i)+\sum_{\ell=1}^{q}\hbar_{t,\ell}(\bvrho_{i,\ell})
	\vartheta_\ell$ and $\bvrho_{i}$ is a vector collecting all distinct variables among $\balpha_i,\bvrho_{i,1},\cdots, \bvrho_{i,q}$. Note that $\balpha_i$ is included in $\bvrho_i$ as a subvector. 
	
	We first prepare some notation.  Define  $\hslash_{it}:=\hslash_t(\bvrho_i)$, and stack all $\hbar_{it}$'s with $t\in\mathcal{R}^\ddagger$ in a $p^\ddagger$-vector $\bm{\hslash}_{i}$. Similarly, for each $\ell=1,\ldots, q$, define $\hbar_{it,\ell}:=\hbar_{t,\ell}(\bvrho_{i,\ell})$ and a $p^\ddagger$-vector $\bm{\hbar}_{i,\ell}$ with typical elements $\hbar_{it,\ell}$ for $t\in\mathcal{R}^\ddagger$, and let $\be_{i,\ell}$ be a $p^\ddagger$-vector with typical elements $e_{it,\ell}$ for $t\in\mathcal{R}^\ddagger$ and $\be_i=(\be_{i,1}, \cdots, \be_{i,q})'$. 
	Given the estimation procedure described in Section \ref{subsec: extensions}, 
	we let $\widehat{\bm{\hbar}}_{i,\ell}$ and  $\widehat{\bm{\hslash}}_i$ be the 
	``estimators'' of $\bm{\hbar}_{i,\ell}$ and $\bm{\hslash}_i$ obtained in Step (b) and Step (c) respectively, and recall
	that $\widehat{\be}_i$ is the estimation residual from Step  (b).
	Then write
	\[
	\begin{split}
	\widehat{\bvth}-\bvth=
	&\Big(\frac{1}{np^\ddagger}\sum_{i=1}^{n}\widehat{\be}_i\widehat{\be}_i'\Big)^{-1}\times
	\bigg\{\Big(\frac{1}{np^\ddagger}\sum_{i=1}^{n}
	\widehat{\be}_i\bu_i\Big)+
	\Big(\frac{1}{np^\ddagger}\sum_{i=1}^{n}\widehat{\be}_i(\be_i-\widehat{\be}_i)'\bvth\Big)+\\
	&\hspace{9.5em}\Big(\frac{1}{np^\ddagger}\sum_{i=1}^{n}\widehat{\be}_i(\bm{\hslash}_{i}-\widehat{\bm{\hslash}}_{i})\Big)\bigg\}\\
	=&\Big(\frac{1}{np^\ddagger}\sum_{i=1}^{n}\widehat{\be}_i\widehat{\be}_i'\Big)^{-1}\times(I_1+I_2+I_3).
	\end{split}
	\]
	
	Now, first notice that
	\begin{align*}
	\frac{1}{np^\ddagger}\sum_{i=1}^{n}\widehat{\be}_i\widehat{\be}_i'=
	&\frac{1}{np^\ddagger}\sum_{i=1}^{n}\be_i\be_i'+
	\frac{1}{np^\ddagger}\sum_{i=1}^{n}(\widehat{\be}_i-\be_i)(\widehat{\be}_i-\be_i)'+\\
	&\frac{1}{np^\ddagger}\sum_{i=1}^{n}(\widehat{\be}_i-\be_i)\be_i'+
	\frac{1}{np^\ddagger}\sum_{i=1}^{n}\be_i(\widehat{\be}_i-\be_i)'.
	\end{align*}
	By Assumption \ref{SA-Assumption: high-rank covariates}, the first term is bounded away from zero w.p.a. 1.  
	Note that for each $\ell\in[q]$, $\widehat{\be}_{i,\ell}-\be_{i,\ell}=
	\bm{\hbar}_{i,\ell}-\widehat{\bm{\hbar}}_{i,\ell}$.
	Under Assumptions \ref{SA-Assumption: high-rank covariates}--\ref{SA-Assumption: local approximation}, 
	the conditions of Corollary \ref{coro: consistency of latent mean} in the main paper are satisfied and hence 
	$\max_{1\leq i\leq n}
	\|\widehat{\bm{\hbar}}_i-\bm{\hbar}_i\|_{\max}=o_\P(1)$.
	Therefore, the last three terms are $o_\P(1)$.
	
	Next, consider $I_2$. For any $1\leq \ell, \ell'\leq q$, by Corollary \ref{coro: consistency of latent mean} in the main paper,
		\[
		\begin{split}
		&\frac{1}{np^\ddagger}\sum_{i=1}^{n}\sum_{t\in\mathcal{R}^\ddagger}
		\widehat{e}_{it,\ell}(e_{it,\ell'}-\widehat{e}_{it,\ell'})\\
		=&\frac{1}{np^\ddagger}\sum_{i=1}^{n}\sum_{t\in\mathcal{R}^\ddagger}
		e_{it,\ell}(\widehat{\hbar}_{it,\ell'}-\hbar_{it,\ell'})+
		\frac{1}{np^\ddagger}\sum_{i=1}^{n}\sum_{t\in\mathcal{R}^\ddagger}
		(\hbar_{it,\ell}-\widehat{\hbar}_{it,\ell})(\widehat{\hbar}_{it,\ell'}-\hbar_{it,\ell'})\\
		=& I_{2,1}+O_\P(\delta_n^{-2}+h_{n,\varrho}^{2m}).
		\end{split}
		\]
	For $I_{2,1}$, by construction, we can write the $\widehat{\hbar}_{it,\ell'}-\hbar_{it,\ell'}=
	-\xi_{it,\ell',\subi}+
	\widehat{\bff}_{t,\ell',\subi}'\widehat{\blambda}_{i,\ell',\subi}-
	\bff_{t,\ell',\subi}'\blambda_{i,\ell',\subi}$ where 
	$\xi_{it,\ell',\subi}$ is the approximation error for $\hbar_{it,\ell'}$, and 
	$\widehat{\bff}_{t,\ell',\subi}$ and $\widehat{\blambda}_{i,\ell',\subi}$ denote the factors corresponding to ``feature $t$'' and factor loadings corresponding to ``unit $i$'' extracted from $\{\bw_{i,\ell'}: 1\leq i\leq n\}$. Also, we will use $\bR_{\ell',\subi}$ and $\widehat{\boldsymbol{\Omega}}_{\ell',\subi}$ to denote the rotation matrix and the matrix of eigenvalues from the local PCA for the $\ell'$th regressor $\bw_{i,\ell'}$ (understood the same way as in Equation \eqref{SA-eq: decomp of factors}). 
	It is easy to see that 
	$\frac{1}{np^\ddagger}\sum_{i=1}^{n}\sum_{t\in\mathcal{R}^\ddagger}\xi_{it,\ell',\subi}e_{it,\ell}\lesssim_\P h_{n,\varrho}^{m}p^{-1/2}$. 
	Moreover,
	\[
	\begin{split}
	&\frac{1}{np^\ddagger}\sum_{i=1}^{n}\sum_{t\in\mathcal{R}^\ddagger}
	(\widehat{\bff}_{t,\ell',\subi}'\widehat{\blambda}_{i,\ell',\subi}-
	\bff_{t,\ell',\subi}'\blambda_{i,\ell',\subi})e_{it,\ell}\\
	=&\frac{1}{np^\ddagger}\sum_{i=1}^{n}\sum_{t\in\mathcal{R}^\ddagger}
	(\widehat{\bff}_{t,\ell',\subi}'-\bff_{t,\ell',\subi}'(\bR_{\ell',\subi}')^{-1})\widehat{\blambda}_{i,\ell',\subi}e_{it,\ell}+\\
	&\frac{1}{np^\ddagger}\sum_{i=1}^{n}\sum_{t\in\mathcal{R}^\ddagger}
	\bff_{t,\ell',\subi}'(\bR_{\ell',\subi}')^{-1}(\widehat{\blambda}_{i,\ell',\subi}-
	\bR_{\ell',\subi}'\blambda_{i,\ell',\subi})e_{it,\ell}
	=:I_{2,1,1}+I_{2,1,2}.
	\end{split}
	\]
	
	For $I_{2,1,1}$, plug in the expansion of the estimated factors as in Equation \eqref{SA-eq: decomp of factors} in the main paper:
	\[
	\begin{split}
	&\frac{1}{p^\ddagger}\sum_{t\in\mathcal{R}^\ddagger}(\widehat{\bff}_{t,\ell',\subi}'-\bff_{t,\ell',\subi}'(\bR_{\ell',\subi}')^{-1})e_{it,\ell}\widehat{\blambda}_{i,\ell',\subi}\\
	=&\Big\{\frac{1}{Kp^\ddagger}\sum_{t\in\mathcal{R}^\ddagger}\sum_{k=1}^{K}e_{it,\ell}\bff_{t,\ell',\subi}'(\bR_{\ell',\subi}')^{-1}(\bR_{\ell',\subi}'\blambda_{j_k(i),\ell',\subi}-\widehat{\blambda}_{j_k(i),\ell',\subi})\widehat{\blambda}_{j_k(i),\ell',\subi}'\\
	&+\frac{1}{Kp^\ddagger}\sum_{t\in\mathcal{R}^\ddagger}\sum_{k=1}^{K}e_{it,\ell}e_{j_k(i)t,\ell'}
	\widehat{\blambda}_{j_k(i),\ell',\subi}'+
	\frac{1}{Kp^\ddagger}\sum_{t\in\mathcal{R}^\ddagger}\sum_{k=1}^{K}e_{it,\ell}\xi_{j_k(i)t,\ell',\subi}\widehat{\blambda}_{j_k(i),\ell',\subi}'\Big\}\times\widehat{\boldsymbol{\Omega}}_{\ell',\subi}^{-1}\widehat{\blambda}_{i,\ell',\subi}.
	\end{split}
	\]
	By the argument in the proof of Theorem \ref{thm: uniform convergence of factors}, $\frac{1}{Kp^\ddagger}\sum_{t\in\mathcal{R}^\ddagger}\sum_{k=1}^{K}e_{it,\ell}e_{j_k(i)t,\ell'}\widehat{\blambda}_{j_k(i),\ell',\subi}'\widehat{\bOmega}_{\ell',\subi}^{-1/2}\lesssim_\P\delta_n^{-1}$. 
	By Assumptions \ref{SA-Assumption: regularities} and \ref{SA-Assumption: local approximation}, $\frac{1}{p^\ddagger}\sum_{t\in\mathcal{R}^\ddagger}e_{it,\ell}\xi_{j_k(i)t,\ell',\subi}\lesssim_\P h_{n,\varrho}^{m}\delta_n^{-1}$ uniformly over $k$ and $i$. 
	By Theorem  \ref{thm: uniform convergence of factors} and the fact that  
	$\frac{1}{p^\ddagger}\sum_{t\in\mathcal{R}^\ddagger}\bff_{t,\ell',\subi}'e_{it,\ell}
	\lesssim_\P \delta_n^{-1}$ uniformly over $k$ and $i$, the first term in $I_{2,1,1}$ is of smaller order.
	Then, it follows that $I_{2,1,1}=O_\P(\delta_n^{-1}+h_{n,\varrho}^{m}\delta_n^{-1})$. 
	Moreover, by Theorem  \ref{thm: uniform convergence of factors}, 
	$I_{2,1,2}=O_\P(\delta_n^{-1}(\delta_n^{-1}+h_{n,\varrho}^{m}))$.
	$I_3$ can be treated similarly. 
	
	Finally, for $I_1$,
	$\frac{1}{np^\ddagger}\sum_{i=1}^{n}
	\widehat{\be}_i\bu_i=
	\frac{1}{np^\ddagger}\sum_{i=1}^{n}
	\be_i\bu_i+\frac{1}{np^\ddagger}\sum_{i=1}^{n}
	(\widehat{\be}_i-\be_i)\bu_i$. 
	Note that $\E[\bu_i|\bw_i]=0$ and $\E[\bu_i|\mathcal{F}]=0$ imply that $\bu_i$ is uncorrelated with $\be_i$.  
	Then, by Assumption \ref{SA-Assumption: regularities}, the first term is $O_\P(1/\sqrt{np})$. The second term is mean zero and of smaller order due to the consistency of $\widehat{\be}_i$. Then, the proof is complete.

\subsubsection{Proof of Theorem \ref{SA-thm: relevant FE}}
	By construction, we use the subsample $\mathcal{R}^\dagger$ for initial matching on $\bx_i$ and $\bw_{i,\ell}$'s, and then apply local PCA to the subsample $\mathcal{R}^\ddagger$ to obtain  $\widehat{\bvth}$. Now we consider the final step of getting the local factors and factor loadings of interest.
	
	By Assumption \ref{SA-Assumption: indirect matching} and Theorem \ref{thm: IMD} given in the main paper, we can immediately obtain the following bound on the matching discrepancy: 
	$$\max_{1\leq i\leq n}
	\max_{1\leq k\leq K}
	\|\balpha_i-\balpha_{j_k(i)}\|\lesssim_\P
	(K/n)^{\uvarsig/(\lvarsig r)}
	+a_n^{1/\lvarsig}.$$
	
	Next,  recall that the local PCA is conducted on the subsample $\mathcal{R}^\wr$ to ensure the factor components and noises are uncorrelated. 
	Let $p^\wr=|\mathcal{R}^\wr|$. 
	Apply the results of Theorem  \ref{thm: uniform convergence of factors} in the main paper with minor modification. Specifically, we have
	\[
	\widehat{\bLambda}_{\subi}-\bLambda_{\subi}\bR_{\subi}=
	\frac{1}{p^\wr}\bXi_{\subi}'\widehat{\bF}_{\subi}+
	\frac{1}{p^\wr}\bU_{\subi}'\widehat{\bF}_{\subi}+
	\frac{1}{p^\wr}(\bvth-\widehat{\bvth})'\tilde{\bW}_{\subi}'\widehat{\bF}_{\subi}
	\]
	where $\tilde{\bW}_{\subi}$ is a $p^\wr\times K\times q$ matrix with $\ell$th sheet given by $(\bw_{\mathcal{R}^\wr,j_1(i),\ell}, \cdots, \bw_{\mathcal{R}^\wr,j_K(i),\ell})$ associated with the $q$-dimensional coefficient $\bvth$. Here $\bw_{\mathcal{R}^\wr,j_k(i),\ell}$ is the subvector of $\bw_{j_k(i),\ell}$ with the indices in $\mathcal{R}^\wr$. Note that the rate condition $\delta_{n}h_{n,\varrho}^{2m}\lesssim 1$ implies that $h_{n,\varrho}^{2m}=o(\upsilon_{\ttd_i^{\textsf{M}},\subi})$ for $\bM=\bH_{\subi}$. Thus, additional approximation error in the final local PCA step that arises from $\tilde\bW_i(\widehat{\bvth}-\bvth)$ is still dominated by the last factor extracted.  Then, the first two terms in the above decomposition can be treated exactly the same way as in the proof of Theorem  \ref{thm: uniform convergence of factors}, and the third term is trivially $O_\P(\|\bvth-\widehat{\bvth}\|)$. Then, the desired result for the estimated loadings follows from Lemma \ref{SA-lem: slope of covariates}. The result for the estimated factors follows by the same argument as in the proof of Theorem \ref{thm: uniform convergence of factors}.

\section{Additional Simulation Results}\label{SA-sec: simulation}

In the following we present the additional simulation results for the three models described in the main paper. 

Table \ref{table:simul, p=500} presents results for the case $p=500$. We still use the first half of rows for $K$-NN matching and the second half for local PCA. We can see the main findings are similar to those for the case $p=1\,000$.
The improvement of local PCA over global PCA is not pronounced in Model 3, since in this binary model the noise is large relative to the ``signal'' (nonlinearity of the latent surface), especially when $p$ is small.

Next, in Table \ref{table:simul, p=500, 0.8knn} we consider the same scenario as in Table \ref{table:simul, p=500} but use more features (80\% of rows) for $K$-NN matching and fewer features (20\%) for local PCA. Thus, PCA is conducted based on a submatrix of $\bX$ with $p$ and $K$ more balanced. We can see the performance of local PCA does not change much in Models 2 and 3. By contrast, the prediction errors for missing entries in Model 1 significantly increase. This can be explained by the large nonlinearity of this model, where using smaller number of features for PCA greatly restricts our ability to extract weaker factors that capture higher-order nonlinearity of the latent functions.

\FloatBarrier
\begin{table}[h]
	\centering
	\scriptsize
	\caption{Simulation Results, $n=1\,000$, $p=500$, $50\%$ for KNN, $2\,000$ replications}
	\label{table:simul, p=500}
	\resizebox{.65\textwidth}{1.2\height}{\footnotesize \begin{tabular}{lrrrcr}
\hline\hline
\multicolumn{1}{l}{\bfseries }&\multicolumn{3}{c}{\bfseries LPCA}&\multicolumn{1}{c}{\bfseries }&\multicolumn{1}{c}{\bfseries GPCA}\tabularnewline
\cline{2-4} \cline{6-6}
\multicolumn{1}{l}{}&\multicolumn{1}{c}{$K$=49}&\multicolumn{1}{c}{$K$=99}&\multicolumn{1}{c}{$K$=149}&\multicolumn{1}{c}{}&\multicolumn{1}{c}{}\tabularnewline
\hline
{\bfseries Model 1}&&&&&\tabularnewline
~~MAE&$0.814$&$1.104$&$2.176$&&$1.199$\tabularnewline
~~$q_\alpha$=.1&$0.089$&$0.078$&$0.097$&&$0.123$\tabularnewline
~~$q_\alpha$=.5&$0.084$&$0.082$&$0.122$&&$0.108$\tabularnewline
~~$q_\alpha$=.9&$0.085$&$0.075$&$0.093$&&$0.119$\tabularnewline
\hline
{\bfseries Model 2}&&&&&\tabularnewline
~~MAE&$0.689$&$0.703$&$0.747$&&$0.875$\tabularnewline
~~$q_\alpha$=.1&$0.073$&$0.061$&$0.058$&&$0.118$\tabularnewline
~~$q_\alpha$=.5&$0.073$&$0.062$&$0.059$&&$0.098$\tabularnewline
~~$q_\alpha$=.9&$0.071$&$0.060$&$0.057$&&$0.116$\tabularnewline
\hline
{\bfseries Model 3}&&&&&\tabularnewline
~~MAE&$0.497$&$0.480$&$0.480$&&$0.475$\tabularnewline
~~$q_\alpha$=.1&$0.040$&$0.045$&$0.053$&&$0.050$\tabularnewline
~~$q_\alpha$=.5&$0.045$&$0.047$&$0.052$&&$0.048$\tabularnewline
~~$q_\alpha$=.9&$0.039$&$0.045$&$0.053$&&$0.050$\tabularnewline
\hline
\end{tabular}
}
	\flushleft\footnotesize{\textbf{Notes}:
		MAE = maximum absolute error; $q_\alpha$:  $\alpha$-quantile of $\{\alpha_i: 1\leq i\leq n\}$.
	}\newline
\end{table}
\FloatBarrier

\FloatBarrier
\begin{table}[h]
	\centering
	\scriptsize
	\caption{Simulation Results, $n=1\,000$, $p=500$, $80\%$ for KNN, $2\,000$ replications}
	\label{table:simul, p=500, 0.8knn}
	\resizebox{.65\textwidth}{1.2\height}{\footnotesize 
\begin{tabular}{lrrrcr}
\hline\hline
\multicolumn{1}{l}{\bfseries }&\multicolumn{3}{c}{\bfseries LPCA}&\multicolumn{1}{c}{\bfseries }&\multicolumn{1}{c}{\bfseries GPCA}\tabularnewline
\cline{2-4} \cline{6-6}
\multicolumn{1}{l}{}&\multicolumn{1}{c}{$K$=49}&\multicolumn{1}{c}{$K$=99}&\multicolumn{1}{c}{$K$=149}&\multicolumn{1}{c}{}&\multicolumn{1}{c}{}\tabularnewline
\hline
{\bfseries Model 1}&&&&&\tabularnewline
~~MAE&$0.984$&$1.126$&$2.159$&&$1.101$\tabularnewline
~~$q_\alpha$=.1&$0.117$&$0.102$&$0.114$&&$0.123$\tabularnewline
~~$q_\alpha$=.5&$0.114$&$0.105$&$0.139$&&$0.108$\tabularnewline
~~$q_\alpha$=.9&$0.117$&$0.103$&$0.115$&&$0.119$\tabularnewline
\hline
{\bfseries Model 2}&&&&&\tabularnewline
~~MAE&$0.736$&$0.735$&$0.781$&&$0.852$\tabularnewline
~~$q_\alpha$=.1&$0.074$&$0.062$&$0.059$&&$0.118$\tabularnewline
~~$q_\alpha$=.5&$0.072$&$0.062$&$0.061$&&$0.098$\tabularnewline
~~$q_\alpha$=.9&$0.074$&$0.062$&$0.059$&&$0.116$\tabularnewline
\hline
{\bfseries Model 3}&&&&&\tabularnewline
~~MAE&$0.486$&$0.467$&$0.465$&&$0.463$\tabularnewline
~~$q_\alpha$=.1&$0.043$&$0.042$&$0.049$&&$0.050$\tabularnewline
~~$q_\alpha$=.5&$0.046$&$0.046$&$0.050$&&$0.048$\tabularnewline
~~$q_\alpha$=.9&$0.042$&$0.042$&$0.049$&&$0.050$\tabularnewline
\hline
\end{tabular}
}
	\flushleft\footnotesize{\textbf{Notes}:
		MAE = maximum absolute error; $q_\alpha$:  $\alpha$-quantile of $\{\alpha_i: 1\leq i\leq n\}$.
	}\newline
\end{table}
\FloatBarrier

\bibliography{Feng_2023_NonlinearFactorModel--Bibliography}
\bibliographystyle{econometrica}

\makeatletter\@input{xx.tex}\makeatother